\newcommand{\dint}{\mathrm{d}}
\newcommand{\td}{\mathbb{T}}
\declaretheoremstyle[
spaceabove=6pt, spacebelow=6pt,
headfont=\normalfont\bfseries,
notefont=\mdseries, notebraces={(}{)},
headpunct=.\,— ,
bodyfont=\normalfont,
numbered=no
]{solu}
\declaretheorem[name=Commentaire, style=solu, preheadhook = \color{blue}]{Com}
\newtheorem{thm}{Theorem}[section]
\newtheorem{corollary}[thm]{Corollary}
\newtheorem{lemma}[thm]{Lemma}
\newtheorem{proposition}[thm]{Proposition}
\newtheorem{assumption}{Assumption}[section]
\newtheorem{example}{Example}[section]
\newtheorem{rem}[thm]{Remark}
\numberwithin{equation}{section}
\newtheorem{assu}[thm]{Assumption}
\title{Exact simulation of the first passage time through a given level of jump diffusions.\protect\thanks{This study has been supported by the project PERISTOCH ANR-19-CE40-0023, 2020--2024 of the French National Research Agency (ANR). The IMB receives support from the EIPHI Graduate School (contract ANR-17-EURE-0002).}}
\begin{document}

\author{S. Herrmann and N. Massin\\[5pt]
\small{Institut de Math{\'e}matiques de Bourgogne (IMB) - UMR 5584, CNRS,}\\
\small{Universit{\'e} de Bourgogne Franche-Comt\'e, F-21000 Dijon, France} \\
\small{Samuel.Herrmann@u-bourgogne.fr}\\
\small{Nicolas.Massin@u-bourgogne.fr}
}
\maketitle

\begin{abstract}
Continuous-time stochastic processes play an important role in the description of random phenomena. It is therefore important to study particular stopping times dependent on the trajectories of these processes. Two approaches are possible: introducing an explicit expression of their probability distribution, and 
evaluating values generated by numerical models.
Choosing the second alternative, we propose an algorithm to generate the first passage time through a given level. The stochastic process under consideration is a one-dimensional jump diffusion that satisfies a stochastic differential equation driven by a Brownian motion. It is subject to random shocks that are characterised by an independent Poisson process. The proposed algorithm belongs to the family of rejection sampling procedures: the outcome of the algorithm and the stopping time under consideration are identically distributed. This algorithm is based on both the exact simulation of the diffusion value at a fixed time and on the exact simulation of the first passage time for continuous diffusion processes (see Herrmann and Zucca \cite{herrmann-zucca-exact}).
At a fixed point in time, the main challenge is to generate the position of a continuous diffusion that is conditioned not to reach a given level before that time. We present the construction of the algorithm and numerical examples. We also discuss specific conditions leading to the recurrence of a jump diffusion process.
\end{abstract}
\textbf{Key words and phrases:} first passage time, jump diffusion, Girsanov formula, rejection sampling, Bessel process, stochastic algorithm.\\
\textbf{2010 AMS subject classifications:} primary: 65C05; secondary: 60J60, 60J76, 60G40.
\section*{Introduction}
Precisely describing the time required by a diffusion process to the first overcome of a given threshold is a challenge in several fields, including  economics \cite{hu2012hitting}, finance \cite{janssen2013applied,Linetsky}, queueing, reliability theory \cite{pieper1997level},and neuroscience \cite{burkitt2006review, sacerdote2013stochastic}. The first passage time through a given level can be used, for example, to evaluate the risk of a  default in mathematical finance. Because this stopping time permits important decisions in practice, it is critical either to obtain an explicit expression of the corresponding probability distribution or to propose efficient algorithms for its random generation. An explicit expression of the density, often based on series expansions, is a source of valuable information. Unfortunately, this expression is only available for particular stochastic models and cannot be used for wide classes of applications. We thus propose a numerical approach, where the challenge is to generate the first passage times using efficient algorithms. 

In many applications, the behaviour of an experimental random value as time elapses can be modelled by a suitable one-dimensional diffusion process $(X_t,\,t\ge 0)$. The corresponding first passage time through a given threshold $L$ is defined by:
\begin{equation}\label{def:tau}
\tau_L:=\inf\{t\ge 0:\ X_t\ge L\}.
\end{equation}
For simplicity, we assume that the starting value $X_0=y_0<L$ is deterministic. In the continuous paths framework, diffusion is represented by the solution of a stochastic differential equation driven by Brownian motion. Different approaches have been proposed to generate $\tau_L$. One method is to use numerical approximations of the paths and to deduce the approximation of the passage time as a byproduct. Most existing studies have been based on improvements of the classical Euler scheme \cite{broadie1997continuity}, \cite{gobet2000weak}, \cite{gobet2010stopped}, \cite{bouchard-al}. In this framework of the time-stepping procedure, we also refer to \cite{kloedenplaten1999Euler}.

Another option is to use the exact simulation techniques introduced by Beskos, Papaspiliopoulos and Roberts and based on the Girsanov transformation. This approach consists of sophisticated rejection sampling. Let us recall the general rejection sampling method: independent and identically distributed (i.i.d) objects are sequentially proposed, and each object is accepted or rejected according to a probability weight. This weight only depends on the proposed object. The procedure stops as soon as an acceptance occurs, and the corresponding object is then chosen. This simple algorithm plays a critical role in the random generation framework. We let $\mathbb{P}$ be a probability measure that is absolutely continuous with respect to another probability measure $\mathbb{Q}$. We then assume that the Radon-Nikodym derivative $\frac{\dint \mathbb{P}}{\dint \mathbb{Q}}$ has an upper bound $c$. To generate a $\mathbb{P}$-distributed variable, independent $\mathbb{Q}$-distributed random variables $(R_n)_{n\ge 1}$ can be generated sequentially. Each variable is then accepted or rejected according to its value; the probability of acceptance is given by $\frac{1}{c}\frac{\dint \mathbb{P}}{\dint \mathbb{Q}}(R_n)$. This algorithm permits the generation of $\mathbb{P}$-distributed variates in a simple way provided that the Radon-Nikodym derivative has an explicit expression. Otherwise alternative methods are required to accept the random variates according to the correct Radon-Nykodym weight. 

Beskos, Papaspiliopoulos and Roberts used the classical rejection 
methodology for the simulation of particular random vectors associated with diffusion trajectories. The distribution of finite dimensional marginals of the Brownian paths is well-known, 
as is the exact distribution of their passage time. Thus, variables depending on the Brownian paths shall represent the proposal random objects in the rejection procedure. The proposal is accepted regarding the Radon-Nykodym derivative,
which connects both the diffusion and the Brownian laws of probability.
Using the Girsanov transformation permits us to describe the acceptance probability and therefore to clarify the methodology,
one generates a Brownian path (more precisely finite dimensional marginals) and accepts or rejects it with a probability depending on the entire path. Beskos, Papaspiliopoulos and Roberts \cite{bpr, beskos2008factorisation, beskos2005exact} first proposed such an approach to simulate variates that depend on the diffusion trajectory
moving on a given time interval $[0,\mathbb{T}]$. The procedure requires particular conditions on the drift and diffusion coefficients, but several improvements permit ease.
An interesting localisation argument was developed by Chen and Huang \cite{chen-huang}. Splitting the state space $\mathbb{R}$ into small intervals allows us to observe the diffusion paths between two exit times. Because the trajectory stays in a specific interval during this random time slot, the conditions can be weakened. This process does not occur without compensation: instead of using Brownian paths as proposal objects in rejection sampling, Chen and Huang consider Brownian meanders. The basic elements used in the algorithm are therefore more sophisticated because they also require a rejection procedure; the algorithm is built as a double layer rejection sampling. Thus, the simulation of first exit times appears to be a byproduct of the so-called localisation argument. Casella and Roberts \cite{casella2008exact} also extended the initial work of Beskos, Papaspiliopoulos and Roberts, and obtained the conditional distribution of the diffusion value at some fixed time given that the process never exits from a specific interval before. Both approaches are different and concern continuous diffusion processes on a fixed time interval $[0,\mathbb{T}]$. For efficiency purposes, we propose an algorithm that avoids space splitting, which can be extended to the jump diffusion framework and is not limited to finite time intervals. No existing algorithm satisfies these three properties.

In view of the description of first crossing times, an alternative approach was proposed by Herrmann and Zucca \cite{herrmann-zucca-exact}, who adapted the 
method of Beskos, Papaspiliopoulos and Roberts to generate $\tau_L$ in an exact way. The outcome of the algorithm has the same distribution as $\tau_L$, and there is no approximation error. Their goal was not to weaken the conditions of the diffusion under consideration but rather to deal directly with an unbounded time interval. The proposal variable in the rejection procedure was the Brownian first passage time through the given threshold (i.e. an inverse Gaussian variate). The weight used for the acceptance of the proposal depends on a suitable Bessel process, a convenient process for simulation purposes. Herrmann and Zucca also proposed algorithms based on the Girsanov transformation for exit time generation \cite{herrmann2020exact1, herrmann2020exact}.

All applications cannot be concerned with one-dimensional continuous diffusion processes. A natural extension is to study jump diffusion. These processes are driven by both Brownian motion and a Poisson random measure, and they are special cases of one-dimensional L\'{e}vy processes. In finance, for example, the changes in a stock exchange can be represented by jump diffusion. In this case, the jumps represent possible events that occur and produce strong impacts on the asset price \cite{kou2002jump}.

For these stochastic processes, several studies concern the approximation of the diffusion trajectory on some finite time interval. The simulation of the first passage time is described by a byproduct in the same way as the procedure used in the continuous case. In particular, the basic Euler method can be adapted to diffusions that are driven by Wiener processes and Poisson random measures. Platen \cite{Platen1}, Maghsoodi \cite{Maghsoodi1, Maghsoodi2} and Gardon \cite{gardon} introduced explicit time-discretization schemes based on the It\^{o}-Taylor expansion and obtained interesting convergence in mean square results. Several studies allows us to understand how to adapt the time grid to the jump times and thereby reduce approximation error. Other types of stochastic convergence have been analysed for the numerical approximation. Other challenging research topics exist; introducing schemes based on semi-implicit Euler-Maruyama methods  \cite{higham1,higham2} or Runge-Kutta methods \cite{buckwar} is of particular interest. The purpose of this study is not to write an exhaustive overview of the literature on numerical analysis because the proposed approach for the generation of first passage times is not based at all on an approximation procedure.Therefore, we refer to the monograph \cite{platenbook} and the references therein for additional information on approximation methods.

 The goal of this paper is to focus on the so-called exact simulation method. It is important to generalise the classical simulation procedure introduced by Beskos, Papaspiliopoulos and Roberts. As already presented in the continuous case, two different approaches can be chosen: one based on the localisation argument and using Brownian meanders, and the other based on Bessel processes. The first method has already been adapted to jump diffusions (see \cite{casella-roberts}, \cite{goncalves-roberts} and \cite{pollock-johansen-roberts}) and thus permits the exact generation of the first exit time from a given interval \cite{giesecke-smelov} using this double layer rejection method. The generation of meanders is nevertheless technical and leads to rather complex algorithms. We propose in this study to adapt the second methodology based on Bessel processes to jump diffusions. We thereby avoid both space splitting and generation of meanders. The new algorithm is thus simpler and requires fewer lines of code, making it more efficient but only available for the first passage time generation. 

This study considers the exact simulation of the first passage time $\tau_L$ defined by \eqref{def:tau}, where $(X_t,\,t\ge 0)$ stands for a jump diffusion process. The remainder of this paper is organised as follows. First, we end the introduction by defining  jump diffusion as the unique solution of a stochastic differential equation. In this situation, only a finite number of jumps are observed on a fixed time interval. Between two consecutive jumps, the path of diffusion is continuous. Thus the first overrun
of a threshold corresponds either to a jump or to a continuous crossing. Both possibilities should be considered in the algorithm. 
We recall the exact simulation techniques introduced for continuous paths in Section \ref{sec:stopdiff} and propose new proofs. Once the continuous crossing time is known, we must compare it with the family of jump times. If a jump time occurs first, then it critical to know the precise value of the diffusion just before it. 
We must generate the value of a continuous diffusion conditioned not to have reached a given level before.
In Section \ref{sec:algostopdiff}, propose an efficient algorithm to generate $\tau_L\wedge \mathbb{T}$, where $\tau_L$ is the first passage time of a continuous diffusion process and $\mathbb{T}>0$ is any fixed time. If we investigate jump diffusion processes, $\mathbb{T}$ is a possible value of the first jump time. To observe what occurs after the first jump, the strong Markov property of the process is considered: the key tool presented in Section 2 can be used sequentially to generate the first passage time $\tau_L$ of a jump diffusion $(X_t,\, t\ge 0)$ (see Theorem \ref{thm:SJD}).  
In Section \ref{sec:infinite}, we focus on the particular situation when $\tau_L<\infty$. Such a context requires additional conditions on the process but also permits direct simulation of $\tau_L$ in an exact way. The last section describes numerical results.

\subsubsection*{Jump diffusion: definition and model reduction}
\begin{Com}
Je supprime ici les sous-parties puisque Lamperti devient une remarque
\end{Com}
Typically, and historically, jump diffusions are described in the following way. We consider a filtered   
probability space $(\Omega, \mathcal{F}, \underline{\mathcal{F}},\mathbb{P})$,  where $\underline{\mathcal{F}}=(\mathcal{F})_{t\ge 0}$ is a filtration, 
and a mark space $(\mathcal{E}, B(\mathcal{E}))$ where $\mathcal{E} \subset \mathbb{R}\setminus \{0\}$. This mark space can be interpreted as the space of jump amplitudes. We define an $\underline{\mathcal{F}}$-adapted Poisson measure 
on $\mathcal{E}\times [0,\infty)$ 
that is denoted by $p_{\phi}(dv \times dt)$, whose intensity measure is given by $\phi(dv)dt$, where $\phi$ is a non negative finite measure defined on $\mathcal{E}$ with $\lambda := \phi(\mathcal{E})$. 

A jump diffusion  $X$ with jump rate $j$ is defined as follows:
\begin{equation}
dX_t =  \mu(t,X_{t-}) \,dt + \sigma(t,X_{t-}) \,dB_t + \int_{\mathcal{E}}j(t,X_{t-},v)p_{\phi}(dv \times dt),\quad t\ge 0,
\label{jdeq}
\end{equation}
with the initial value $X_0=y_0$.
In this study, $(B_t,\, t\ge 0)$ stands for a one-dimensional Brownian motion. The Poisson measure is assumed to be homogeneous in this study; the function $\phi$ only depends on the space variable. The result can thus be extended to the time-dependent case using a thinning procedure (see \cite{casella2011exact} for a description of this general approach). 

Classic conditions on the coefficients ensure the existence of a unique strong solution of \eqref{jdeq}, whose trajectories are right-continuous with left-hand limits (see Theorem 9.1 in \cite{ikeda-watanabe} for homogeneous coefficients and Theorem 1.19 in \cite{oksendal2005applied} in a general non-homogeneous case). These conditions introduced in Assumption \ref{assu:Wee1} concern the regularity of the coefficients and a growth property.We thus assume  that $\sigma(t,x)>0$ for all $(t,x)\in\mathbb{R}_+\times\mathbb{R}$ to avoid any degeneracy. 

For numerical studies, the representation \eqref{jdeq} is not convenient; we prefer another representation. The Poisson measure $p_\phi$ permits to generate a sequence of $\mathcal{E}\times [0,\infty)$-valued random points $(\xi_i,T_i)_{i\ge 1}$ where $(T_i)_{i\ge 1}$ is increasing.
The sequence of couples represents each jump amplitude and the corresponding jump time. The random amplitudes $(\xi_i)_{i\ge 1}$ are independent with distribution function $\phi/\lambda$.
%
The time spent between two consecutive jumps is exponentially distributed; therefore, we introduce $T_i=\sum_{k=1}^i E_k$, 
where $(E_k)_{k\ge 1}$ stands for exponentially distributed random variables with an average of $1/\lambda$. The initial position of the diffusion is given by $Y_0=y_0$. Between two jumps, the stochastic process satisfies the stochastic differential equation:
%
%
\begin{equation}
dY_{t} = \mu(t,Y_{t})\,dt + \sigma(t,Y_{t})\,dB_t, \quad \mbox{for}\ T_i<t<T_{i+1},\quad i\in\mathbb{N},
\label{afterijump}
\end{equation} 
and the jumps modify the trajectories as follows:
\begin{equation}
Y_{T_i}=Y_{T_{i}-}+j(T_i,Y_{T_i-},\xi_i),\quad \forall i\in\mathbb{N},
\label{afterijump1}
\end{equation}
where the sequence $(\xi_i)_{i\ge 1}$ must be independent of the  Brownian motion $(B_t)_{t\ge 0}$. The solution $(X_t)_{t\ge 0}$ of \eqref{jdeq} has the same path distribution as $(Y_t)_{t\ge 0}$, which is defined by \eqref{afterijump}--\eqref{afterijump1}.

The goal of this study is to simulate the first overrun of a given level $L$ for this jump diffusion.  For the proposed discussion, we  assume that $y_0<L$; it is straightforward to deduce the general case. The second representation plays a critical role; thus, we  simulate the trajectory of an SDE solution between two successive jumps exactly. The simulation of the first overrun should be based on the following intuitive procedure: we first simulate the trajectory of the stochastic process that satisfies the SDE without jump exactly  and consider its first passage time through the level $L$ denoted by $\tau_L$. Independently, we simulate the first exponentially distributed jump time $T_1$. If $\tau_L\le T_1$, then $\tau_L$ corresponds to the first passage time of the jump diffusion. In the other case, we simulate the position of the diffusion after the first jump using
\[
Y_{T_1}=Y_{T_1-}+j(T_1,Y_{T_1-},\xi_1).
\] 
We distinguish two likely different cases:  if $Y_{T_1}\ge L$, then $\tau_L=T_1$; otherwise, we know that $\tau_L>T_1$. The strong Markov property of the jump diffusion permits the start of a new jump diffusion with the initial position $Y_{T_1}<L$ and initial time $T_1$. Therefore, we repeat the procedure just presented, between $T_1$ and $T_2$, etc. An important tool for the simulation is the exact generation of continuous diffusion paths. In \cite{herrmann-zucca-exact}, the authors propose an efficient method based on both rejection sampling and the Girsanov transformation.


\begin{rem}
We assume that $\sigma$ is a $\mathcal{C}^{0,1}(\mathbb{R}_+\times\mathbb{R},\mathbb{R})$-continuous function such that $\sigma(t,x)>0$ for any $(t,x) \in \mathbb{R}_+\times \mathbb{R}$. 
To simplify the study,  we consider Lamperti's approach, which transforms the SDE:  
\begin{equation}
dX_t = \mu(t,X_t)dt + \sigma(t,X_t)dB_t,\quad t\ge 0,
\end{equation}
with initial Condition $X_0=x_0$ into the following equation:
\begin{equation}
dZ_t = \alpha(t,Z_t)dt + dB_t,\quad t\ge 0
\end{equation}
This well-known transformation corresponds to:
\begin{equation*}
Z_t =\nu(t,X_t) = \int_{0}^{X_t} \frac{1}{\sigma(t,x)}\,dx
\label{lamperti}
\end{equation*}
and:
\begin{equation*}
\alpha(t,x) = \frac{\partial \nu}{\partial t}(t,\nu^{-1}(t,x)) + \frac{\mu(t,\nu^{-1}(t,x))}{\sigma(t,\nu^{-1}(t,
x))} - \frac{1}{2}\frac{\partial \sigma}{\partial x}(t,\nu^{-1}(t,x)),
\end{equation*}
where $\nu^{-1}: \mathbb{R}_+\times\mathbb{R}\to\mathbb{R}$ is the unique function such that $\nu^{-1}(t,\nu(t,x)) = x$ for any $(t,x) \in \mathbb{R}_+\times \mathbb{R}$.
The jump times remain unchanged by the function $\nu$; we still consider the sequence $(T_i)_{i\ge 1}$, while the jump amplitudes are modified. Thus, the transport property permits us to obtain the following identity:
\begin{equation}
Z_{T_i}=Z_{T_i-}+\hat{\jmath}(t,Z_{T_i-},\xi),\quad i\in\mathbb{N},
\label{lamperti2}
\end{equation}
with
\[
\hat{\jmath}(t,z,v):=\nu(t,\nu^{-1}(t,z)+j(t,\nu^{-1}(t,z),v))-\nu(t,\nu^{-1}(t,z)).
\]
\end{rem}

\label{reduction}
\begin{Com}
Our aim is to deal with the general framework pointed out in the previous section. However we shall emphasize an interesting technique which permits to transform the considered stochastic differential equation into a simplified equation. Indeed Lamperti's transformation permits to change the equation in such a way that the diffusion coefficient becomes constant. This method is commonly used for  classical continuous diffusions and we just recall the crucial idea before presenting the extension to jump diffusions (usually the method is presented in the time-homogeneous context, we choose to present here diffusions with time dependent coefficients).
We consider the following SDE:
\begin{equation}
dX_t = \mu(t,X_t)dt + \sigma(t,X_t)dB_t,\quad t\ge 0,
\end{equation}
with the initial condition $X_0=x_0$.
The aim of Lamperti's transformation is to find a particular diffusion process $(Z_t)_{t\ge 0}$ defined as a functional of both $t$ and $X_t$,
that is  $Z_t = \nu(t,X_t)$, such that $Z_t$ satisfies 
\begin{equation}
dZ_t = \alpha(t,Z_t)dt + dB_t,\quad t\ge 0.
\end{equation}
Of course the function $\alpha$ has to be defined using $\nu$.
Applying It\^o's lemma to the process $Z_t$ leads to
\begin{align*}
dZ_t &= \frac{\partial \nu}{\partial t}(t,X_t)\, dt + \frac{\partial \nu}{\partial x}(t,X_t) \Big(\mu(t,X_t) dt  + \sigma(t,X_t) dB_t\Big) +\frac{1}{2}\frac{\partial^2 \nu}{\partial x^2}(t,X_t)\,d\langle X_t, X_t\rangle\\
&=\left( \frac{\partial \nu}{\partial t}(t,X_t) + \frac{\partial \nu}{\partial x}(t,X_t)\mu(t,X_t) + \frac{1}{2}\frac{\partial^2 \nu}{\partial x^2}(t,X_t) \sigma(t,X_t)^2\right)dt + \frac{\partial \nu}{\partial x}(t,X_t) \sigma(t,X_t)\,dB_t.
\end{align*}
From this equality, we deduce the importance to find $\nu$ such that $\frac{\partial \nu}{\partial x} =  \frac{1}{\sigma}$. We obtain the following statement:

\begin{proposition}
Let us assume that $\sigma$ is a $\mathcal{C}^{0,1}(\mathbb{R}_+\times\mathbb{R},\mathbb{R})$-continuous function. Let $\xi\in\mathbb{R}$. If $\sigma(t,x)>0$ for any $(t,x) \in \mathbb{R}_+\times \mathbb{R}$, then the process defined by
\begin{equation*}
Z_t =\nu(t,X_t) = \int_{\xi}^{X_t} \frac{1}{\sigma(t,x)}\,dx
\label{lamperti}
\end{equation*}
satisfies the following SDE:
\begin{equation}
dZ_t = \left(\frac{\partial \nu}{\partial t}(t,\nu^{-1}(t,Z_t)) + \frac{\mu(t,\nu^{-1}(t,Z_t))}{\sigma(t,\nu^{-1}(t,
Z_t))} - \frac{1}{2}\frac{\partial \sigma}{\partial x}(t,\nu^{-1}(t,Z_t))\right) dt + dB_t,\quad t\ge 0,
\label{edslamperti}
\end{equation}
with initial value $Z_0= \nu(0,x_0)$. Here $\nu^{-1}: \mathbb{R}_+\times\mathbb{R}\to\mathbb{R}$ is the unique function verifying $\nu^{-1}(t,\nu(t,x)) = x$ for any $(t,x) \in \mathbb{R}_+\times \mathbb{R}$.
\end{proposition}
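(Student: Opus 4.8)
The plan is to apply It\^o's formula to the process $Z_t=\nu(t,X_t)$ and to read off the drift and diffusion coefficients once the partial derivatives of $\nu$ are in hand. Since $X$ is an It\^o process with $d\langle X,X\rangle_t=\sigma(t,X_t)^2\,dt$, It\^o's lemma gives
\begin{equation*}
dZ_t=\left(\frac{\partial\nu}{\partial t}(t,X_t)+\frac{\partial\nu}{\partial x}(t,X_t)\,\mu(t,X_t)+\frac{1}{2}\frac{\partial^2\nu}{\partial x^2}(t,X_t)\,\sigma(t,X_t)^2\right)dt+\frac{\partial\nu}{\partial x}(t,X_t)\,\sigma(t,X_t)\,dB_t,
\end{equation*}
so it remains only to evaluate the two spatial derivatives of $\nu$ and then to re-express the coefficients in terms of $Z_t$.

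First I would differentiate $\nu(t,x)=\int_{\xi}^{x}\sigma(t,u)^{-1}\,du$ in the space variable. Since $\sigma$ is continuous and strictly positive, the fundamental theorem of calculus yields $\frac{\partial\nu}{\partial x}(t,x)=\sigma(t,x)^{-1}$, and a further differentiation — legitimate because $\sigma$ is $\mathcal{C}^{0,1}$, hence $\mathcal{C}^1$ in $x$ — gives $\frac{\partial^2\nu}{\partial x^2}(t,x)=-\sigma(t,x)^{-2}\frac{\partial\sigma}{\partial x}(t,x)$. Substituting $\partial_x\nu=1/\sigma$ into the diffusion term makes the product $\frac{\partial\nu}{\partial x}(t,X_t)\,\sigma(t,X_t)$ collapse to $1$, which produces the desired $dB_t$; substituting $\partial_x^2\nu$ into the It\^o correction turns the second-order term into $-\frac{1}{2}\frac{\partial\sigma}{\partial x}(t,X_t)$, while the first-order $\mu$-term becomes $\mu(t,X_t)/\sigma(t,X_t)$. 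The drift therefore reduces to $\frac{\partial\nu}{\partial t}(t,X_t)+\mu(t,X_t)/\sigma(t,X_t)-\frac{1}{2}\frac{\partial\sigma}{\partial x}(t,X_t)$.

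To reach the stated form I would finally replace $X_t$ by $\nu^{-1}(t,Z_t)$. This step is justified because $\frac{\partial\nu}{\partial x}=1/\sigma>0$ makes $x\mapsto\nu(t,x)$ strictly increasing, hence a bijection onto its range, so the function $\nu^{-1}$ in the statement is well defined and $X_t=\nu^{-1}(t,Z_t)$ holds pathwise. The initial condition is immediate, since $Z_0=\nu(0,X_0)=\nu(0,x_0)$.

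The one point that requires care is the regularity needed to apply It\^o's formula, which presupposes $\nu\in\mathcal{C}^{1,2}$. The hypothesis $\sigma\in\mathcal{C}^{0,1}$ delivers exactly the $\mathcal{C}^2$-regularity in $x$ used above, but the time derivative $\frac{\partial\nu}{\partial t}$ — which already appears in the conclusion — requires enough smoothness of $\sigma$ in $t$ for differentiation under the integral sign to be valid, and this is implicitly assumed rather than guaranteed by $\mathcal{C}^{0,1}$ alone. I expect this time-regularity issue, rather than any algebraic manipulation, to be the main subtlety; once the derivatives of $\nu$ are available the identification of the coefficients is entirely routine.
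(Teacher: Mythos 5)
Your proof follows essentially the same route as the paper: apply It\^o's formula to $Z_t=\nu(t,X_t)$, compute $\frac{\partial \nu}{\partial x}=1/\sigma$ and $\frac{\partial^2 \nu}{\partial x^2}=-\frac{\partial \sigma}{\partial x}/\sigma^2$ from the fundamental theorem of calculus, and read off the drift and diffusion coefficients. Your closing remark on time-regularity is well taken: the paper's own proof simply writes $\frac{\partial \nu}{\partial t}(t,X_t)=-\int_\xi^{X_t}\frac{\partial \sigma}{\partial t}(t,x)\,\sigma(t,x)^{-2}\,dx$, i.e.\ it differentiates under the integral sign as if $\sigma$ were differentiable in $t$, so that hypothesis is implicit in the paper exactly as you identified.
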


\begin{proof}
As previously stated, the particular choice of the function $\nu$ permits to observe that 
\begin{equation*}
\frac{\partial \nu}{\partial x}(t,X_t) =  \frac{1}{\sigma(t,X_t)},
\end{equation*}
leading to
\begin{equation*}
\frac{\partial^2 \nu}{\partial x^2}(t,X_t) =  -\frac{\frac{\partial \sigma}{\partial x}(t,X_t)}{\sigma(t,X_t)^2}\quad\mbox{and}\quad\frac{\partial \nu}{\partial t}(t,X_t)=-\int_\xi^{X_t}\frac{\frac{\partial \sigma}{\partial t}(t,x)}{\sigma(t,x)^2}\,dx.
\end{equation*}
The announced result \eqref{edslamperti} is therefore an immediate consequence of It\^o's lemma.
\end{proof}

Let us just note that the general statement can be simplified in the particular homogeneous case. If neither the diffusion coefficient nor the drift term depend directly on the time variable, that is $\sigma(t,x)\equiv \sigma(x)$ and $\mu(t,x)\equiv\mu(x)$, then the transformation becomes
\begin{equation}
Z_t = \nu(X_t) = \int_{\xi}^{X_t} \frac{1}{\sigma(u)}du,
\end{equation}
where $\xi$ is a fixed value belonging to the state space. It is often convenient to choose $\xi$ as the starting position of the diffusion, the new process $(Z_t)_{t\ge 0}$ starts then at the origin. An other interesting choice may be $\xi=L$, the fixed level under observation in the description of the first passage time. The FP problem then consists of considering the first time the new diffusion process $(Z_t)_{t\ge 0}$ goes through the level $0$. 
The transformed diffusion is solution of the SDE $dY_t = \alpha(X_t)dt + dB_t$, where 
\begin{equation}
\alpha(x) = \frac{\mu \circ \nu^{-1}(x)}{\sigma \circ \nu^{-1}(x)}- \frac{1}{2} \sigma ' \circ \nu^{-1}(x),\quad \forall x\in\mathbb{R}.
\label{alpha}
\end{equation}
Let us now consider jump diffusions. The same Lamperti transformation can be applied. Let us consider $Z_t:=\nu(t,X_t)$ defined in \eqref{lamperti}, we observe that $\nu$ transforms the diffusion equation between two successive jumps \eqref{afterijump} into 
\begin{equation}
dZ_t=\alpha(t,Z_t)\,dt+dB_t,\quad \mbox{for}\ T_i<t<T_{i+1},\quad i\in\mathbb{N}.
\label{lamperti1}
\end{equation}
Of course the jump times are not changed at all by the function $\nu$: we still work with the sequence $(T_i)_{i\ge 1}$ whereas the jump amplitudes undergo modifications. Hence the transport property permits to modify \eqref{afterijump1} into
\begin{equation}
Z_{T_i}=Z_{T_i-}+\hat{\jmath}(t,Z_{T_i-},\xi),\quad i\in\mathbb{N},
\label{lamperti2}
\end{equation}
where 
\[
\hat{\jmath}(t,z,v):=\nu(t,\nu^{-1}(t,z)+j(t,\nu^{-1}(t,z),v))-\nu(t,\nu^{-1}(t,z)).
\]
Indeed it suffices to note that
\[
Z_{T_i}-Z_{T_i-}=\nu(T_i,X_{T_i-}+j(T_i,X_{T_i-},\xi_i))-\nu(T_i,X_{T_i-})\quad\mbox{and}\quad X_t=\nu^{-1}(t,Z_t).
\]
To conclude, the Lamperti transformation permits to change the jump diffusion \eqref{afterijump}--\eqref{afterijump1} into the jump diffusion \eqref{lamperti1}--\eqref{lamperti2}. In the following we shall assume that the diffusion coefficient is constant: $\sigma\equiv 1$, that corresponds to a model reduction procedure.
\end{Com}

\section{Simulation of the first passage time for a stopped continuous diffusion}
\label{sec:stopdiff}
We now fix a time parameter $\td>0$. In this section, we focus on continuous diffusion paths moving on the time interval $[0,\td]$. Generating a random object exactly means that we must generate an object using a stochastic algorithm such that both objects have the same distribution. Beskos, Papaspiliopoulos and Roberts \cite{bpr} proposed an efficient algorithm to simulate a continuous diffusion path exactly on the interval under consideration: $[0,\td]$. However, we cannot generate the entire paths numerically thus, the exact simulation effectively simulate a sequence of random points belonging to the trajectory in that case.

Herrmann and Zucca \cite{herrmann-zucca-exact} modified the algorithm introduced by Beskos, Papaspiliopoulos and Roberts to  generate $\tau_L$, the first passage time through level $L$ for continuous diffusion. Their approach presented in the next paragraph primarily differs from the localisation procedure introduced in \cite{chen-huang}, which is based on the generation of Brownian meanders. To address jump diffusion (next section), we also simulate the couple $(\tau_L\wedge \td, Y_{\tau_L\wedge \td})$, where $(Y_t)_{t\ge 0}$ represents continuous diffusion. The proposed technique also differs from the method proposed by Casella and Roberts \cite{casella2008exact}, who studied the exact simulation of killed diffusions introducing constrained Brownian bridges.

\subsection{Diffusion without any jump: the exact simulation method}
We first consider continuous diffusion processes and propose a numerical approach for the generation of their paths. We recall in this section the famous approach introduced by Beskos, Papaspiliopoulos and Roberts to set out the notations and we propose simplified proofs.
As already explained, we only investigate the reduced model in this study, and the generalisation was obtained by the Lamperti transformation. Thus, we consider, on a given probability space $(\Omega,\mathcal{F},\mathbb{P})$, the following SDE on the fixed time interval $[0,\td]$:
\begin{equation}
dY_t=\alpha(t,Y_t)\,dt+dB_t,\quad Y_0=y_0,
\label{numero}
\end{equation}
where $(B_t)_{t\ge 0}$ is a standard one-dimensional Brownian motion.
The direct generation of a diffusion path is a difficult task, which is why we introduce
the link between diffusion and standard Brownian motion using the famous Girsanov formula. The goal of this formula is to find a suitable probability space in which  the considered diffusion is Brownian motion. Then we generate a Brownian motion path and perform an acceptance/rejection methodology using the probability weight appearing in the Girsanov formula.
\subsubsection*{Girsanov transformation: consequences for simulation purposes}
First, we recall the statement of the Girsanov transformation and the associated Novikov condition (for a reference about Girsanov's transformation, see, for instance, \cite{Oksendal}).
\begin{assumption}[Novikov's condition]
The drift term $\alpha$ satisfies Novikov's condition if: 
\begin{equation}
\mathbb{E}_{\mathbb{P}}\left[\exp\left(\frac{1}{2}\int^\td_0 \alpha^2(s,y_0+B_s)\, ds\right)\right] < \infty.
\label{Novikov}
\end{equation}
\end{assumption}
This particular condition is satisfied as soon as the growth of the drift term $\alpha$ is at most linear (see Corollary 5.16 p.200 in \cite{K-S}). There exists a constant $C_\td>0$ such that:
\[
|\alpha(t,x)|\le C_\td(1+|x|),\quad \forall (t,x)\in[0,\td]\times \mathbb{R}.
\]
Then, the following transformation holds.
\begin{thm}
Assuming that $\alpha$ satisfies Novikov's condition, we define the martingale $(M_t)_{t\ge 0}$ by:
\begin{equation}
M_t = \exp\left(\int^t_0 \alpha(s,y_0+B_s)\,dB_s - \frac{1}{2}\int_0^t \alpha^2(s,y_0+B_s)ds\right), \quad t \leq \td,
\label{weight}
\end{equation}
and the measure $\mathbb{Q}$ on $(\Omega, \mathcal{F}_\td)$:
\begin{equation}
d\mathbb{Q}=M_\td\,d\mathbb{P}.
\end{equation}
Then, under $\mathbb{Q}$, the stochastic process  $\Big(B_t-\int_0^t\alpha(s,B_s)\,ds\Big)_{0\le t\le \td}$ is a one-dimensional standard Brownian motion.  $(y_0+B_t)_{0\le t\le \td}$ under $\mathbb{Q}$ has the same distribution as  $(Y_t)_{0\le t \le \td}$ under $\mathbb{P}$. \label{thm-RN}
\end{thm}
The Radon-Nikodym derivative introduced in the previous statement becomes the weight necessary for the use of an acceptance/rejection sampling.
We use the Girsanov formula as follows. We consider $(Y_t)_{t\ge 0}$, the solution of the SDE \eqref{numero}, and $f$ to be any measurable functional depending on the paths of $Y$ that are observed on the time interval $[0,\td]$. Then:
\begin{align*}
\mathbb{E}_{\mathbb{P}}[f(Y_{\cdot})] &=\mathbb{E}_{\mathbb{Q}}[f(y_0+B_{\cdot})] = \mathbb{E}_{\mathbb{P}}[f(y_0+B_{\cdot})\cdot M_\td]  \\
&= \mathbb{E}_{\mathbb{P}}\left[f(y_0+B_{\cdot})\exp\left(\int^\td_0 \alpha(s,y_0+B_s) dB_s - \frac{1}{2}\int_0^\td \alpha^2(s, y_0+B_s)ds\right)\right],
\end{align*}
where $(B_t)_{t\ge 0}$ is a standard Brownian motion under the probability measure $\mathbb{P}$. 
We introduce two different functions that play a critical role in the numerical algorithm. We define:
\begin{equation}
\beta(t,x):=\int_{y_0}^x \alpha(t,y)\,dy\quad \mbox{and}\quad \gamma(t,x)=\frac{\partial \beta}{\partial t}(t,x)+\frac{1}{2}\,\Big( \frac{\partial \alpha}{\partial x}(t,x)+\alpha^2(t,x) \Big).
\label{eq:def:beta-gamma}
\end{equation}
Using It\^o's formula applied to the stochastic process $(\beta(t,y_0+B_t))_{t \geq 0}$, we obtain: 
\begin{equation}
\mathbb{E}_{\mathbb{P}}[f(Y_{\cdot})] =\mathbb{E}_{\mathbb{P}}[f(y_0+B_{\cdot})\cdot \hat{M}_\td]\quad \mbox{with}\quad\hat{M}_\td:=e^{\beta(\td,y_0+B_\td) - \int^\td_0  \gamma(t,y_0+B_s)\,ds}.
\label{eq:Girsanov}
\end{equation}
\subsubsection*{General hypotheses for continuous diffusion processes}
Throughout this study, diffusion \eqref{numero} begins in $y_0$ with $y_0<L$. We now present different hypotheses concerning the drift coefficient $\alpha$ in \eqref{numero} that permit us to describe a typical framework for the development of efficient algorithms. The goal of this process is not, at this stage, to precisely emphasise the most general situation that permits the use of the exact simulation technique; several studies have already  successively weaken these conditions (see Beskos, Papaspiliopoulos and Roberts \cite{bpr}). The goal of this process is rather to consider a convenient context where the critical arguments used in the study of the continuous diffusions can easily
be adapted to jump diffusions. First, we require a classical regularity property to apply It\^o's formula.
\begin{assumption}
The drift coefficient $\alpha$ is a $\mathcal{C}^{1,1}(\mathbb{R}_+\times\mathbb{R})$-continuous function.
\label{assum20}
\end{assumption}
\noindent The regularity property of $\alpha$ immediately implies that $\beta$ and $\gamma$ defined by \eqref{eq:def:beta-gamma} are well-defined and continuous functions. According to the simulation goals, we need additional conditions such as the boundedness of $\beta$ or of $\gamma$. If the goal is to generate $Y_\td$, we require the following assumption.
\begin{assumption}
The function $\gamma$ defined in \eqref{eq:def:beta-gamma} is nonnegative and satisfies the following; thus, there exists a constant $\kappa$ s.t.:
\begin{equation}
0 \leq \gamma(t,x) \leq \kappa, \quad \forall (t,x) \in[0,\td]\times\mathbb{R}.
\end{equation}
\label{assum21}
\end{assumption}
\begin{assumption}
The function $\beta$ defined in \eqref{eq:def:beta-gamma} has an upper bound at time $\td$; thus, there exists a constant $\beta_+>0$ s.t.
\begin{equation}
\beta(\td,x) \leq \beta_+, \quad \forall x \in\mathbb{R}.\label{assum22-1}
\end{equation}
\label{assum22}
\end{assumption}
\noindent This assumption can be weakened. Because condition \eqref{assum22-1} concerns the indefinite integral of the drift term $\alpha(\cdot,\cdot)$ with respect to the space variable, classical drift terms such as constant functions do not satisfy it. It is thus important to propose another type of condition: the existence of an upper bound is replaced by an integration property. Such a condition is in particular satisfied by constant functions $\alpha(\cdot,\cdot)$.
\begin{assumption}
The function $\Gamma_\td:\mathbb{R}\to\mathbb{R}$ defined by:
\[
\Gamma_\td(x):=
\exp\Big\{\beta(\td,y_0+x)-\frac{x^2}{2\td}\Big\},\quad x\in\mathbb{R},
\]
with $\beta$ introduced in \eqref{eq:def:beta-gamma}, is integrable: $\Gamma_\td\in L^1(\mathbb{R})$.
\label{assum23}
\end{assumption}
\noindent  If $\beta$ satisfies Assumption \ref{assum22}, then the corresponding function $\Gamma_\td$ is integrable with respect to the Gaussian measure in the sense of Assumption \ref{assum23}.   
Assumptions \ref{assum21}, \ref{assum22} and \ref{assum23} essentially concern the simulation of $Y_\td$, where $\td$ is a fixed time. If we are rather interested in the first passage time through level $L$, we shall focus the proposed attention on the space subset $]-\infty,L]$.
\begin{assumption}
The function $\gamma$ defined in \eqref{eq:def:beta-gamma} is nonnegative and satisfies the following: there exists a constant $\kappa$ s.t.:
\begin{equation}
0 \leq \gamma(t,x) \leq \kappa, \quad (t,x) \in \mathbb{R}_+\times ] -\infty , L].
\end{equation}
\label{assum24}
\end{assumption}
\begin{assumption}
The function $\beta$ defined in \eqref{eq:def:beta-gamma} has an upper bound; thus, there exists a constant $\beta_+>0$ s.t.
\begin{equation}
\beta(t,x) \leq \beta_+, \quad \forall (t,x) \in\mathbb{R}_+\times ]-\infty,L].
\end{equation}
\label{assum25}
\end{assumption}
\noindent All these assumptions change as soon as the drift coefficient $\alpha$ in equation \eqref{numero} is homogeneous in time. The results presented in most previous studies (\cite{bpr}, \cite{herrmann-zucca-exact}) concern this restrictive context, but generalisation is a quite simple task.
%
%
\subsubsection*{Approach developed by Beskos, Papaspiliopoulos and Roberts}
Beskos, Papaspiliopoulos and Roberts proposed in \cite{bpr} a simulation methodology for the exact generation of diffusion paths on some given interval $[0,\mathbb{T}]$. Their study is based on the Girsanov transformation and on acceptance/rejection sampling. As already explained, to generate a random object with probability distribution $\mathbb{P}$, an iterative procedure is introduced based on the generation of independent $\mathbb{Q}$-distributed random objects. Then, their acceptance or rejection associated with a weight proportional to $\frac{\dint \mathbb{P}}{\dint\mathbb{Q}}$ achieves the intended goal. This Radon-Nikodym derivative in the diffusion case corresponds to $M_\mathbb{T}$, as shown in Theorem \ref{thm-RN}. This weight depends on the entire Brownian trajectory; rejection sampling therefore requires a sophisticated approach to accept or reject in accordance
with the correct probability. 

This procedure requires a Poisson process that is independent of diffusion \eqref{numero} that allows us  to obtain the required weight in the rejection method \eqref{eq:Girsanov}. Their method is not easy to adapt to jump diffusions because they do not use the Markov property of the diffusion process. Thus, we propose an alternative presentation of their result (and the corresponding proof) and thus start generalising the jump diffusion context.
For a clear and succinct presentation of this issue, we prefer to only introduce the exact simulation of $Y_\mathbb{T}$, where $(Y_t)_{t\ge 0}$ corresponds to the solution of \eqref{numero}.
\begin{framed}
\centerline{\sc Exact simulation of $Y_\td$ -- Algorithm $(B\!R)_\mathbb{T}^1$}
\emph{\begin{enumerate}
\item Let $(G_n)_{n\ge 1}$  be independent random variables with Gaussian distribution $\mathcal{N}(0,1)$
\item Let $(E_n)_{n\ge 1}$  be independent exponentially distributed r.v. with an average of $1/\kappa$. 
\item Let $(U_n)_{n\geq 1}$ be independent uniformly distributed random variables on $[0,1]$. 
\end{enumerate}
 The sequences  $(G_n)_{n\ge 1}$, $(E_n)_{n\ge 1}$ and $(U_n)_{n\geq 1}$ are assumed to be independent.}\\[5pt]
\noindent {\bf Initialisation:} $n=0$.\\[2pt]
{\bf Step 1.} Set $Z=y_0$, $\mathcal{T}=0$.\\[5pt]
{\bf Step 2.} While $\mathcal{T}<\mathbb{T}$ do:
\begin{itemize}
\item set  $n\leftarrow n+1$
\item $Z\leftarrow Z+\sqrt{\min(\mathcal{T}+E_n,\td)-\mathcal{T}}\,G_n$ and $\mathcal{T}\leftarrow \min(\mathcal{T}+E_n,\mathbb{T})$ 
\item If ($\mathcal{T}<\mathbb{T}$ and $\kappa\, U_n<\gamma(\mathcal{T}, Z)$) then go to Step 1.
\item If ($\mathcal{T}= \mathbb{T}$ and $U_ne^{\beta_+}>e^{\beta(\td,Z)}$) then go to Step 1.
\end{itemize}
{\bf Outcome:} the random variable $Z$. 
\end{framed}
\noindent Explanation of the procedure: Each iteration of \emph{Step 1} corresponds to the start of a new random experiment, which is independent of all previous experiments and consists of a generation of Brownian finite dimensional marginals represented by successive values of $Z$. The last value, which corresponds to the Brownian position at time $\mathbb{T}$, becomes the outcome of the algorithm, provided that all marginals satisfy specific constraints. The probability that all these conditions are simultaneously satisfied is exactly the weight proportional to $M_\mathbb{T}$ as expected. The objective is achieved: the value of the Brownian motion at time $\mathbb{T}$ is accepted with probability $M_\mathbb{T}$, and the outcome consequently has the same distribution as $Y_\mathbb{T}$.
\begin{proposition} Under Assumptions \ref{assum20}, \ref{assum21} and \ref{assum22}, both the outcome $Z$ of Algorithm $(B\!R)_\mathbb{T}^1$ and $Y_\mathbb{T}$, the time $\td$ of the diffusion process \eqref{numero}, have the same distribution.\label{prop:BR1}
\end{proposition}
\begin{rem} An adaptation of Algorithm $(B\!R)_{\mathbb{T}}^1$ should permit us to obtain more than just the random variable $Z$, which has the same distribution as  $Y_\mathbb{T}$. Denoting $n_1$ by $n$  throughout the last iteration of step number one, $n_2$ as the increment variable when the algorithm stops, and $Z_n$ (respectively $\mathcal{T}_n$) the successive values of $Z$ (resp. $\mathcal{T}$), we obtain that: 
\[
\Big\{(0,y_0),\, (\mathcal{T}_{n_1+1},Z_{n_1+1} ),\ldots,(\mathcal{T}_{n_2-1},Z_{n_2-1}),\,(\mathbb{T},Z_{n_2})\Big\}
\] 
is a set of points that has the same distribution as points belonging to the diffusion trajectory.
\label{rem:algo-BR}
\end{rem}
\begin{proof} We denote the number of necessary repetitions of \emph{Step 1} as $\mathcal{N}$ and let $\psi$ be a nonnegative measurable function. Because the algorithm is based on acceptance/rejection sampling (see Proposition \ref{prop:rejet}),
we obtain:
\begin{align*}
\mathbb{E}[\psi(Z)]&=\frac{\mathbb{E}[\psi(Z)1_{\{ \mathcal{N}=1 \}}]}{\mathbb{P}(\mathcal{N}=1)}=\frac{\nu(\psi)}{\nu(1)}\quad \mbox{where}\quad \nu(\psi):=\mathbb{E}[\psi(Z)1_{\{ \mathcal{N}=1 \}}].
\end{align*}
We now use the notations $(\mathcal{T}_n)_{n\ge 0}$ and $(Z_n)_{n\ge 0}$ introduced in Remark \ref{rem:algo-BR} and define the event $A_n:=\{\mathcal{T}_{n-1}<\mathbb{T}\le \mathcal{T}_{n-1}+E_{n}=\mathcal{T}_{n}\}$, for any $n\ge 1$, which describes the number of random intervals necessary to cover $[0,\mathbb{T}]$. We observe that:
\begin{align*}
\nu(\psi)&=\sum_{n\ge 1}\mathbb{E}\Big[\psi(Z_n)1_{\{\kappa\, U_1>\gamma(\mathcal{T}_1,Z_1),\ldots,\,  \kappa\, U_{n-1}>\gamma(\mathcal{T}_{n-1},Z_{n-1})\}}1_{\{ U_n\le e^{\beta(\mathbb{T},Z_n)-\beta_+} \}}1_{A_n}\Big].
\end{align*}
Taking the integral with respect to the independent uniform variates $(U_n)_{n\ge 1}$ leads to:
\begin{align*}
\nu(\psi)&=\sum_{n\ge 1}\kappa^{1-n}\mathbb{E}\Big[\psi(Z_n)(\kappa-\gamma(\mathcal{T}_1,Z_1))\,\ldots\,  (\kappa-\gamma(\mathcal{T}_{n-1},Z_{n-1})) e^{\beta(\mathbb{T},Z_n)-\beta_+}1_{A_n}\Big].
\end{align*}
We note that given $A_n$, $(\mathcal{T}_1,\ldots,\mathcal{T}_{n-1})$ has the same distribution as $(V^{(1)},\ldots, V^{(n-1)})$, an ordered $(n-1)$-tuple of uniform random variables $(V_1,\ldots, V_{n-1})$ on $[0,\mathbb{T}]$. The probability of event $A_n$ can be computed using a Poisson distribution of parameter $\kappa \mathbb{T}$. Finally, $(Z_1,\ldots Z_n)$ is a Gaussian vector and  has the same distribution as: \[(y_0+B_{V^{(1)}},\ldots, y_0+B_{V^{(n-1)}},y_0+B_\mathbb{T})\] where $(B_t)_{t\ge 0}$ is a standard Brownian motion independent of the $(n-1)$-tuple $(V_1,\ldots, V_{n-1})$ because the Brownian motion has Gaussian independent increments. Thus,
\begin{align*}
\nu(\psi)&=\sum_{n\ge 1}\mathbb{E}\Big[\psi(Z_n)(\kappa-\gamma(\mathcal{T}_1,Z_1))\,\ldots\,  (\kappa-\gamma(\mathcal{T}_{n-1},Z_{n-1})) e^{\beta(\mathbb{T},Z_n)-\beta_+}\Big\vert A_n\Big]\frac{\mathbb{T}^{n-1}}{(n-1)!}\,e^{-\kappa \mathbb{T}}\\
&=\sum_{n\ge 1}\mathbb{E}\Big[\psi(y_0+B_\mathbb{T})\prod_{j=1}^{n-1}(\kappa-\gamma(V^{(j)},y_0+B_{V^{(j)}}) e^{\beta(\mathbb{T},y_0+B_\mathbb{T})-\beta_+}\Big]\frac{\mathbb{T}^{n-1}}{(n-1)!}\,e^{-\kappa \mathbb{T}}\\
&=\sum_{n\ge 1}\mathbb{E}\Big[\psi(y_0+B_\mathbb{T})\prod_{j=1}^{n-1}(\kappa-\gamma(V_j,y_0+B_{V_j})) e^{\beta(\mathbb{T},y_0+B_\mathbb{T})-\beta_+}\Big]\frac{\mathbb{T}^{n-1}}{(n-1)!}\,e^{-\kappa \mathbb{T}}.
\end{align*}
Taking the expectation with respect to the uniformly distributed variates $V_j$, we have:
\begin{align*}
\nu(\psi)&=\sum_{n\ge 1}\mathbb{E}\Big[\psi(y_0+B_\mathbb{T})\Big(\kappa-\frac{1}{\mathbb{T}}\int_0^\mathbb{T}\gamma(s,y_0+B_{s})\,ds\Big)^{n-1} e^{\beta(\mathbb{T},y_0+B_\mathbb{T})-\beta_+}\Big]\frac{\mathbb{T}^{n-1}}{(n-1)!}\,e^{-\kappa \mathbb{T}}\\
&=\mathbb{E}\Big[\psi(y_0+B_\mathbb{T}) \exp\Big\{\beta(\mathbb{T},y_0+B_\mathbb{T})-\beta_+-\int_0^\mathbb{T}\gamma(s,y_0+B_{s})\,ds\Big\}\Big]\\
&=\mathbb{E}[\psi(y_0+B_\mathbb{T})\cdot\hat{M}_\mathbb{T}]\, e^{-\beta_+},
\end{align*}
where $(\hat{M}_t)_{t\ge 0}$ is the martingale defined in \eqref{eq:Girsanov}. The martingale property leads to $\nu(1)=\mathbb{E}[\hat{M}_0]e^{-\beta_+}=e^{-\beta_+}$. The Girsanov transformation permits us to conclude the following:
\[
\mathbb{E}[\psi(Z)]=\frac{\nu(\psi)}{\nu(1)}=\mathbb{E}[\psi(y_0+B_\mathbb{T})\cdot\hat{M}_\mathbb{T}]=\mathbb{E}[\psi(Y_\mathbb{T})].
\]
\end{proof}

\begin{rem} Algorithm $(B\!R)_\mathbb{T}^1$ permits the generation of the random variable $Y_\mathbb{T}$. Both parameters $\kappa$ and $\beta_+$ in the procedure do not affect the probability distribution of the outcome, but they strongly influence the algorithm efficiency. The previous proof indicates that the number of iterations $\mathcal{N}$ of \emph{Step 1} is geometrically distributed with average $\frac{1}{\nu(1)}=e^{\beta_+}$. Each random experiment corresponding to \emph{Step 2} is based on a random number of tests. This number, which roughly represents the computing cost of the experiment, is stochastically smaller than a Poisson random variable with parameter $\kappa \mathbb{T}$. Using the Wald identity (see Appendix) leads to the following upper bound of the overall cost of Algorithm $(B\!R)_\mathbb{T}^1$: $\kappa\mathbb{T}e^{\beta_+}$. It is therefore critical to choose the smallest values for both parameters $\kappa$ and $\beta_+$.\label{rem:eff}
\end{rem}

Under Assumptions \ref{assum20}, \ref{assum21} and \ref{assum22}, the algorithm considered in Proposition \ref{prop:BR1} has a convenient and intuitive expression. However, the boundedness of the function $\beta(\mathbb{T},\cdot)$ is restrictive, making it  important to propose an alternative approach. Thus, Beskos, Papaspiliopoulos and Roberts introduced an integrability condition for $\Gamma_\mathbb{T}$, written in Assumption \ref{assum23}. Because $\Gamma_\mathbb{T}$ is a nonnegative function, the integrability condition summarised in the identity $\Gamma_\mathbb{T}(\mathbb{R}):=\int_{\mathbb{R}}\Gamma_\mathbb{T}(x)\,dx<\infty$ ensures that $\Gamma_\mathbb{T}(\cdot)/\Gamma_\mathbb{T}(\mathbb{R})$ is a probability density function. This critical property allows us to present the following algorithm, where $x_+:=\min(x,0)$. 
\begin{framed}
\centerline{\sc Exact simulation of $Y_\mathbb{T}$ -- Algorithm $(B\!R)_\mathbb{T}^2$}
\emph{\begin{enumerate}
\item Let $(R_n)_{n\ge 1}$ be independent random variables with density $\Gamma_\mathbb{T}(\cdot)/\Gamma_\mathbb{T}(\mathbb{R})$.
\item Let $(G_n)_{n\ge 1}$  be independent random variables with Gaussian distribution $\mathcal{N}(0,1)$
\item Let $(E_n)_{n\ge 1}$  be independent exponentially distributed r.v. with average $1/\kappa$.
\item Let $(U_n)_{n\geq 1}$ be independent uniformly distributed  random variables on $[0,1]$. 
\end{enumerate}
 The sequences  $(R_n)_{n\ge 1}$, $(G_n)_{n\ge 1}$, $(E_n)_{n\ge 1}$ and $(U_n)_{n\geq 1}$ are assumed to be independent.}\\[5pt]
\noindent {\bf Initialisation:} $k=0$, $n=0$.\\[2pt]
{\bf Step 1.} Set $k\leftarrow k+1$ then $Z=y_0$, $W=y_0+R_k$ and $\mathcal{T}=0$.\\[5pt]
{\bf Step 2.} While $\mathcal{T}<\mathbb{T}$ do:
\begin{itemize}
\item set  $n\leftarrow n+1$
\item $\displaystyle Z\leftarrow Z+\frac{E_n}{\mathbb{T}-\mathcal{T}}\,W+\sqrt{\frac{E_n(\mathbb{T}-\mathcal{T}-E_n)_+}{\mathbb{T}-\mathcal{T}}}\ G_n$ and $\mathcal{T}\leftarrow \min(\mathcal{T}+E_n,\mathbb{T})$ 
\item If ($\mathcal{T}<\mathbb{T}$ and $\kappa\, U_n<\gamma(\mathcal{T}, Z)$) then go to Step 1.
\end{itemize}
{\bf Outcome:} the random variable $W$. 
\end{framed}
This procedure in this second algorithm is different from the first one. The critical idea of Algorithm $(B\!R)_\mathbb{T}^1$ is to simulate a Brownian motion on the interval $[0,\mathbb{T}]$ and to accept or reject the trajectory using the probability of selection issued from the Girsanov transformation. Acceptance strongly depends on the entire path of the process and leads to the outcome $y_0+B_\mathbb{T}$, the endpoint of the Brownian path. In Algorithm $(B\!R)_\mathbb{T}^2$, the approach is different: we consider a random variable $W$ with the proposal distribution $\Gamma_\mathbb{T}(\cdot)/\Gamma_\mathbb{T}(\mathbb{R})$ translated by $y_0$. This variate shall be accepted or rejected using a probability based on the entire path of a Brownian bridge starting in $y_0$ and ending at time $\mathbb{T}$ with $W$. The primary difference is therefore to replace the Brownian motion by the Brownian bridge. We thus obtain the following statement.
\begin{proposition} Under Assumptions \ref{assum20}, \ref{assum21} and \ref{assum23}, both the outcome $W$ of Algorithm $(B\!R)_\mathbb{T}^2$ and $Y_\mathbb{T}$, the value at time $\mathbb{T}$ of the diffusion process \eqref{numero}, have the same distribution.\label{prop:BR2}
\end{proposition}
The proof of Proposition \ref{prop:BR2} is similar to the proof of Proposition \ref{prop:BR1}. We refer to \cite{these-Nicolas} for the adapted proof. The conditions concerning the drift coefficient of the diffusion \eqref{numero} can be weakened using the so-called localisation procedure \cite{chen-huang}, but we do not intend to develop such an approach in this study as already mentioned.
\begin{Com}
\begin{proof}[Sketch of proof] The proof of Proposition \ref{prop:BR2} is quite similar to the proof of Proposition \ref{prop:BR1}.  We shall therefore not go into all details. Let $\psi$ a non-negative measurable function. Since the algorithm is based on a rejection sampling, we get as usual
\begin{align*}
\mathbb{E}[\psi(W)]&=\frac{\mathbb{E}[\psi(W)1_{\{ \mathcal{N}=1 \}}]}{\mathbb{P}(\mathcal{N}=1)}=\frac{\nu(\psi)}{\nu(1)}\quad \mbox{where}\quad \nu(\psi):=\mathbb{E}[\psi(W)1_{\{ \mathcal{N}=1 \}}],
\end{align*}
and $\mathcal{N}$ stands for the number of visits of the steps number one before the algorithm stops. Using similar notations and arguments as those developed in the previous proof, we observe that
\begin{align*}
\nu(\psi)&=\sum_{n\ge 1}\kappa^{1-n}\mathbb{E}\Big[\psi(W)(\kappa-\gamma(\mathcal{T}_1,Z_1))\,\ldots\,  (\kappa-\gamma(\mathcal{T}_{n-1},Z_{n-1}))1_{A_n}\Big].
\end{align*}
Given both $A_n$ and the value $W$, the $(n-1)$-tuple $(\mathcal{T}_1,\ldots,\mathcal{T}_{n-1})$ has the same distribution as $(V^{(1)},\ldots, V^{(n-1)})$ an ordered $(n-1)$-tuple of uniform random variables on $[0,\mathbb{T}]$. Moreover the probability of the event $A_n$ is linked to the Poisson distribution of parameter $\kappa \mathbb{T}$. Finally $(Z_1,\ldots Z_{n-1})$ is a Gaussian vector and has the same distribution as \[(b_{V^{(1)}},\ldots,b_{V^{(n-1)}})\] with $(b_t)_{0\le t\le \td}$ a Brownian bridge independent of the $(n-1)$-tuple $(V_1,\ldots, V_{n-1})$. The Brownian bridge starts for $t=0$ with the value $y_0$ and ends with the value $W$ at time $\mathbb{T}$. Hence
\begin{align*}
\nu(\psi)&=\sum_{n\ge 1}\mathbb{E}\Big[\psi(W)\prod_{j=1}^{n-1}(\kappa-\gamma(V_j,b_{V_j})) \Big]\frac{\mathbb{T}^{n-1}}{(n-1)!}\,e^{-\kappa \mathbb{T}}\\
&=\mathbb{E}\Big[\psi(W) \exp\Big\{-\int_0^\mathbb{T}\gamma(s,b_{s})\,ds\Big\}\Big]\\
&=\mathbb{E}\Big[\psi(W)\,\mathbb{E}\Big[ \exp\Big\{-\int_0^\mathbb{T}\gamma(s,y_0+B_{s})\,ds\Big\}\Big|y_0+B_\mathbb{T}=W\Big]\Big],
\end{align*}
where $(B_t)_{t\ge 0}$ is a standard Brownian motion. Using the explicit distribution of the variable $W$, we obtain
\begin{align*}
\nu(\psi)&=\frac{1}{\Gamma_\mathbb{T}(\mathbb{R})}\,\int_{\mathbb{R}}\psi(y_0+x)\,\mathbb{E}\Big[ \exp\Big\{-\int_0^\mathbb{T}\gamma(s,y_0+B_{s})\,ds\Big\}\Big|B_\mathbb{T}=x\Big]\,\\
&\hspace*{4cm}\times 
\exp\Big\{\beta(\mathbb{T},y_0+x)-\frac{x^2}{2\mathbb{T}}\Big\} \,dx\\
&=\frac{\sqrt{2\pi \mathbb{T}}}{\Gamma_\mathbb{T}(\mathbb{R})}\,\mathbb{E}\Big[ \psi(y_0+B_\mathbb{T}) \exp\Big\{\beta(\mathbb{T},y_0+B_\mathbb{T})-\int_0^\mathbb{T}\gamma(s,y_0+B_{s})\,ds\Big\} \Big]\\
&=\frac{\sqrt{2\pi \mathbb{T}}}{\Gamma_\mathbb{T}(\mathbb{R})}\,\mathbb{E}[\psi(y_0+B_\mathbb{T})\cdot \hat{M}_\mathbb{T}],
\end{align*}
where $(M_t)_{t\ge 0}$ is the Girsanov martingale defined in \eqref{eq:Girsanov}. We deduce that
\[
\mathbb{E}[\psi(W)]=\frac{\nu(\psi)}{\nu(1)}=\mathbb{E}[\psi(y_0+B_\mathbb{T})\cdot\hat{M}_\mathbb{T}]=\mathbb{E}[\psi(Y_\mathbb{T})],
\]
this equality is satisfied for any non-negative function $\psi$ and therefore corresponds to the announced statement.
\end{proof}
\end{Com}
\subsubsection*{Algorithm introduced by Herrmann and Zucca}

The procedure of the exact simulation was modified by Herrmann and Zucca \cite{herrmann-zucca-exact} to generate the first passage time of continuous diffusion processes. We now focus on the first passage time of the diffusion $Y$, starting in $y_0$, through the level $L$. This algorithm is also based on the Girsanov formula, and requires constructing  a skeleton of a 3-dimensional Bessel process. 

We first recall that in the particular case of Brownian motion, the first passage time through level $L$ denoted by $\tau_L$ satisfies $\tau_L \sim (L-y_0)^2/G^{2}$, where $G \sim \mathcal{N}(0,1)$. The primary idea is therefore to first generate a Brownian crossing and second to accept or reject this variate using the Girsanov weight. The construction of this algorithm looks similar to the algorithms presented by Beskos, Papaspiliopoulos and Roberts. The primary difference is to replace the Brownian paths (or Brownian bridge paths) that appear in the rejection sampling by Bessel paths. The explanation of such a modification derives from the observation that once the Brownian first passage time $\tau_L$ is generated, the Brownian motion constrained to stay under the level $L$ on $[0,\tau_L]$ is related to a $3$-dimensional Bessel process. The algorithm proposal is as follows. 
%
%
%
%
%
%
%
%
%
%
%

\begin{framed}
\centerline{\sc Exact simulation of $\tau_L$ for continuous diffusion -- Algorithm $(H\!Z)$}
\emph{\begin{enumerate}
\item Let $(G_n)_{n\ge 1}$  be independent standard 3-dimensional Gaussian vectors.
\item Let $(e_n)_{n\ge 0}$  be independent exponentially distributed r.v. with an average of $1/\kappa$.
\item Let $(V_n)_{n\geq 1}$ be independent uniformly distributed  r.v. on $[0,1]$. 
\item Let $(g_n)_{n\ge 1}$  be independent standard Gaussian r.variables. 
\end{enumerate}
 The sequences $(G_n)_{n\ge 1}$, $(e_n)_{n\ge 0}$, $(V_n)_{n\geq 1}$ and $(g_n)_{n\ge 1}$ are assumed to be independent.}\\[5pt]
\noindent {\bf Initialisation:} $k=0$, $n=0$.\\[2pt]
{\bf Step 1.} $k \leftarrow k+1$, $\delta = (0,0,0)$, $\mathcal{W} = 0$,  $\mathcal{T}_k \leftarrow (L-y_0)^2/g_k^{2}$, $\mathcal{E}_0 = 0$ and $\mathcal{E}_1 = e_n$.\\[5pt]
{\bf Step 2.}  While $\mathcal{E}_1 \leq \mathcal{T}_k$ do:
\begin{itemize}
\item set  $n\leftarrow n+1$
\item $\displaystyle\delta \leftarrow \frac{\mathcal{T}_k - \mathcal{E}_1}{\mathcal{T}_k- \mathcal{E}_0} \delta+ \sqrt{\frac{(\mathcal{T}_k - \mathcal{E}_1)(\mathcal{T}_k - \mathcal{E}_0)}{\mathcal{T}_k - \mathcal{E}_0}}\ G_n$ 
\item If $\kappa V_n \leq \gamma(\mathcal{E}_1,L - \parallel\mathcal{E}_1(L-y_0)(1,0,0)/\mathcal{T}_k + \delta \parallel)$ then $ \mathcal{W} \leftarrow 1$ else  $\mathcal{W} \leftarrow 0$
\item  $\mathcal{E}_0 \leftarrow \mathcal{E}_1$ and $\mathcal{E}_1 \leftarrow \mathcal{E}_1 + e_n$
\end{itemize}
{\bf Step 3.} If $\mathcal{W} =0$ then $\mathcal{Y}\leftarrow\mathcal{T}_k$ otherwise go to \textit{Step 1.}\\
{\bf Outcome:} the random variable $\mathcal{Y}$. 
\end{framed}
\begin{proposition}[Herrmann-Zucca, 2019] We assume that $\tau_L<\infty$ almost surely where $\tau_L$ is the first passage time of the diffusion \eqref{numero} through the level $L$. Under Assumptions \ref{assum20} and \ref{assum24}, both the outcome $\mathcal{Y}$ of Algorithm $(H\!Z)$ and $\tau_L$ have the same distribution.\label{prop:HZ}
\end{proposition}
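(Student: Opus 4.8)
The plan is to treat Algorithm $(H\!Z)$ exactly as the rejection sampler analysed in Proposition \ref{prop:BR1}. Fix a non-negative measurable $\phi$, let $\mathcal{N}$ be the number of visits to Step 1, and set $\nu(\phi):=\mathbb{E}[\phi(\mathcal{T}_1)\,1_{\{\mathcal{N}=1\}}]$, so that $\mathbb{E}[\phi(\mathcal{Y})]=\nu(\phi)/\nu(1)$. The first observation is that the proposal $\mathcal{T}_1=(L-y_0)^2/g_1^{2}$ with $g_1\sim\mathcal{N}(0,1)$ has exactly the law of the Brownian first passage time $\tau_L^B:=\inf\{t\ge 0:\ y_0+B_t=L\}$, whose density is the Lévy (inverse-Gaussian) density $\ell(t)=\frac{L-y_0}{\sqrt{2\pi t^3}}\,e^{-(L-y_0)^2/(2t)}$. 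This plays the role of the Gaussian / Brownian-bridge proposal in the earlier algorithms and is where the passage-time structure enters.

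Next I would compute the acceptance weight conditionally on $\mathcal{T}_1=t$. The exponential clocks $(e_n)$ drop a Poisson$(\kappa)$ number of instants on $[0,t]$ at uniform order-statistic locations, and, integrating out the uniforms $V_n$ and summing the series $\sum_{n\ge 1}\big(\kappa-\frac{1}{t}\int_0^t\gamma(s,R_s)\,ds\big)^{n-1}\frac{t^{n-1}}{(n-1)!}e^{-\kappa t}$ precisely as in Proposition \ref{prop:BR1}, the acceptance probability equals $\mathbb{E}\big[\exp\{-\int_0^t\gamma(s,R_s)\,ds\}\big]$, where $R$ is the position process of Step 2. Here Assumption \ref{assum24} is what makes the Poisson thinning legitimate: the norm built in Step 2 is non-negative, so $R_s\le L$ throughout and $0\le\gamma(\,\cdot\,,R_s)\le\kappa$. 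I would then identify $R$: the update of $\delta$ is a three-dimensional Brownian-bridge recursion pinned at $0$ at time $\mathcal{T}_1$, so $\mathcal{E}_1(L-y_0)(1,0,0)/\mathcal{T}_1+\delta$ is a three-dimensional Brownian bridge joining $(0,0,0)$ to $(L-y_0,0,0)$, its norm is a $3$-dimensional Bessel bridge from $0$ to $L-y_0$, and $R_s=L-\|\cdots\|$ runs from $L$ down to $y_0$.

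The heart of the argument is the classical representation of the conditioned Brownian path. Conditionally on $\tau_L^B=t$, the motion $(y_0+B_s)_{0\le s\le t}$ stays below $L$, and after reflection and time reversal $L-(y_0+B_{t-\cdot})$ is a $3$-dimensional Bessel bridge from $0$ to $L-y_0$; hence $R$ has the law of this conditioned path read backwards. Consequently $\mathbb{E}[\exp\{-\int_0^t\gamma(s,R_s)\,ds\}]$ coincides with the conditional Brownian expectation $\mathbb{E}[\exp\{-\int_0^t\gamma(s,y_0+B_s)\,ds\}\mid\tau_L^B=t]$, the reversal $s\mapsto t-s$ being immaterial in the time-homogeneous case in which $\gamma$ and $\beta$ carry no time dependence. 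Multiplying by $\ell(t)$ and integrating gives $\nu(\phi)\propto\mathbb{E}\big[\phi(\tau_L^B)\,e^{-\int_0^{\tau_L^B}\gamma(s,y_0+B_s)\,ds}\big]$. Because $y_0+B_{\tau_L^B}=L$, the endpoint factor $e^{\beta(\tau_L^B,L)}=e^{\beta(L)}$ of the Girsanov weight $\hat{M}_{\tau_L^B}$ in \eqref{eq:Girsanov} is a constant, so $\nu(\phi)\propto\mathbb{E}[\phi(\tau_L^B)\,\hat{M}_{\tau_L^B}]$; the Girsanov identity \eqref{eq:Girsanov} stopped at $\tau_L^B$ then yields $\nu(\phi)\propto\mathbb{E}[\phi(\tau_L)]$, with $\tau_L$ the passage time of \eqref{def:tau}. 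Since the normalization $\nu(1)$ turns the proportionality into an equality, we conclude $\mathbb{E}[\phi(\mathcal{Y})]=\mathbb{E}[\phi(\tau_L)]$.

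I expect two steps to demand the most care. The first is the Bessel-bridge identification together with its time reversal: justifying rigorously that a Brownian motion killed at $L$ and pinned at its passage time is a reflected, reversed Bessel$(3)$ bridge, and checking that the reversal leaves $\int_0^t\gamma$ invariant, which is transparent only when $\alpha$ (hence $\gamma$ and $\beta(\cdot,L)$) is time-homogeneous — the setting in which the algorithm's weight, free of any $\beta$ factor, is exact. The second is the legitimacy of the Girsanov change of measure up to the unbounded stopping time $\tau_L^B$: this is exactly where the hypothesis $\tau_L<\infty$ almost surely is needed, since it guarantees that $\hat{M}_{\cdot\wedge\tau_L^B}$ closes to an honest (total mass one) density, so that the proposal law is non-defective and the rejection ratio $\nu(\phi)/\nu(1)$ is well posed.
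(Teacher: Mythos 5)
Your proposal is correct and follows essentially the route that the paper itself points to: the paper gives no proof of Proposition \ref{prop:HZ}, deferring to Herrmann--Zucca and to the arguments of Proposition \ref{prop:BR1}, and your reconstruction (rejection-sampling identity, inverse-Gaussian proposal $(L-y_0)^2/g^2$, Poisson thinning legitimized by Assumption \ref{assum24}, Williams' reflection/time-reversal identification of the conditioned path as a Bessel(3) bridge, and Girsanov closed at the stopping time using $\tau_L<\infty$ a.s.) is exactly that proof.

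The one substantive point is your caveat about time-homogeneity, and you should know that it is not mere prudence: it marks a genuine discrepancy between the algorithm as printed and the statement as printed. The paper claims Proposition \ref{prop:HZ} for the time-inhomogeneous drift of \eqref{numero} (its own remark presents this as the novelty, and Remark \ref{rem:adapt} shifts $\gamma$ in time), yet two obstructions — both of which you identified — block that extension for Algorithm $(H\!Z)$ as written. First, the path used in Step 2, $s\mapsto L-\|s(L-y_0)(1,0,0)/\mathcal{T}_k+\delta\|$, runs from $L$ at $s=0$ down to $y_0$ at $s=\mathcal{T}_k$; it is the conditioned Brownian path \emph{read backwards}, so pairing it with the forward time argument $\gamma(\mathcal{E}_1,\cdot)$ produces the weight $\mathbb{E}\big[\exp\{-\int_0^t\gamma(t-u,y_0+B_u)\,du\}\,\big|\,\tau_L^B=t\big]$, whereas Girsanov demands $\mathbb{E}\big[\exp\{-\int_0^t\gamma(u,y_0+B_u)\,du\}\,\big|\,\tau_L^B=t\big]$; these agree only when $\gamma$ is time-homogeneous. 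Second, the endpoint factor $e^{\beta(t,L)}$ of $\hat{M}_{\tau_L^B}$ from \eqref{eq:Girsanov} is constant in the homogeneous case and so cancels in the ratio $\nu(\phi)/\nu(1)$, but in the inhomogeneous case it varies with the proposal $t$ and would have to enter the algorithm as an extra acceptance test (exactly as Step 4 of Algorithm $(C\!D)$ does, under Assumption \ref{assum25} — an assumption Proposition \ref{prop:HZ} does not even invoke). So your proof establishes precisely the statement that holds for the algorithm as written, namely the homogeneous case (the original Herrmann--Zucca theorem), and correctly diagnoses why the paper's claimed inhomogeneous adaptation requires modifying the algorithm (pinning the bridge in the opposite direction and adding a $\beta$-acceptance step), not merely rerunning the same computation.
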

The detailed proof of Proposition \ref{prop:HZ} is presented in \cite{herrmann-zucca-exact}. We do not present a sketch of this proof in this study because most of its arguments are  similar to those noted in the proof of Proposition \ref{prop:BR1}. However, Herrmann and Zucca did not investigate a time-dependent drift term as appearing in equation \eqref{numero}; they focused their attention on the homogeneous case. The statement of Proposition \ref{prop:HZ} is therefore an adaptation of their result to the nonhomogeneous case: in this study, the function $\gamma$ depends both on the time and space variables.
\begin{rem} The algorithm $(H\!Z)$ can be adapted to the particular case of a continuous diffusion process starting at time $\mathbb{T}_0>0$ with the value $y$. In this case, $(Y_t)_{t\ge \mathbb{T}_0}$ is the solution of the following stochastic equation:
\[
dY_t=\alpha(t,Y_t)\,dt+dB_t,\quad \forall t\ge \td_0\quad\mbox{and}\quad Y_{\mathbb{T}_0}=y<L.
\]
The definition of the first passage time is modified slightly: $\tau_L$ becomes the first time after $\mathbb{T}_0$ such that the diffusion hits the level $L$. The modifications of the algorithm consist of the substitution of $y_0$ for $y$, $\gamma(\cdot,\cdot)$ for $\gamma(\mathbb{T}_0+\cdot,\cdot)$ and the addition of
$\mathbb{T}_0$ to $\mathcal{Y}$. We then denote  $(H\!Z)_{\mathbb{T}_0}^{y,L}$ the corresponding algorithm.
\label{rem:adapt}
\end{rem}
\subsection{Stopped continuous diffusion}

In the previous section, several procedures of exact simulation were presented:
\begin{itemize}
\item simulation of $Y_\mathbb{T}$: the value of diffusion \eqref{numero} at any fixed time $\mathbb{T}$.
\item simulation of the first passage time through level $L$ for the diffusion, denoted $\tau_L$.
\end{itemize}
We can generate the position, and, we can generate the exit time. To complete the description, we introduce a suitable combination of the time and the position that shall play an essential role later on.
We thus build an algorithm that permits us to obtain the exact simulation of the random couple $(\tau_L\wedge \mathbb{T}, Y_{\tau_L\wedge \mathbb{T}})$ linked to the stopped diffusion. In this study, $(Y_t)_{t\ge 0}$ still stands for continuous diffusion. 

We first introduce a preliminary result about the standard Brownian motion $(B_t)_{t\ge 0}$. One stage of this study is to generate a random variable that has the same conditional distribution as $B_\mathbb{T}$ given $\tau_L>\mathbb{T}$, where $\mathbb{T}$ is fixed and $\tau_L$ is the Brownian first passage time (we shall assume that $L>0$; the other case can be obtained by symmetry arguments). We thus build the following algorithm.

\begin{framed}
\centerline{\sc Conditional Brownian motion given $\tau_L>\mathbb{T}$ -- Algorithm $(C\!B\!M)_{\mathbb{T}}^L$ }
\emph{\begin{enumerate}
\item Let $(G_n)_{n\ge 1}$ be a sequence of independent standard Gaussian random variables
\item Let $(U_n)_{n\geq 1}$ be a sequence of indep. uniformly distributed  random variables on $[0,1]$. 
\end{enumerate}
 The sequences $(G_n)_{n\ge 1}$ and $(U_n)_{n\geq 1}$ are assumed to be independent.}\\[5pt]
\noindent {\bf Initialisation:} $n=1$,  $\mathcal{Y} = 0$.\\[2pt]
{\bf While} $\sqrt{\mathbb{T}}\,G_n > L$ or $-\frac{\mathbb{T}}{2 L}\ln(U_n)> L-\sqrt{\mathbb{T}}\,G_n$ {\bf do} $n \leftarrow n+1$.\\ 
{\bf Set} $\mathcal{Y} \leftarrow \sqrt{\mathbb{T}}G_n$.\\
{\bf Outcome:} The random variable $\mathcal{Y}$.
\end{framed}
\begin{proposition} We let $(B_t)_{t\ge 0}$ be a standard Brownian motion. Then, both the outcome $\mathcal{Y}$ of Algorithm $(C\!B\!M)_\mathbb{T}^L$ and $B_\mathbb{T}$ given $\tau_L>\mathbb{T}$ have the same distribution.\label{prop:CBM}
\end{proposition}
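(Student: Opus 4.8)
The plan is to recognize Algorithm $(C\!B\!M)_{\mathbb{T}}^L$ as a plain rejection sampler whose proposal is $\sqrt{\mathbb{T}}\,G_n\sim\mathcal{N}(0,\mathbb{T})$, which already has the unconditioned law of $B_{\mathbb{T}}$, and then to check that the acceptance weight reshapes this Gaussian proposal into the target conditional law. First I would write down the target: by the reflection principle, for $L>0$ and $x<L$ the sub-probability density of $B_\mathbb{T}$ on the event $\{\tau_L>\mathbb{T}\}=\{\max_{s\le\mathbb{T}}B_s<L\}$ is
\begin{equation*}
p(x)=\frac{1}{\sqrt{2\pi\mathbb{T}}}\Big(e^{-x^2/(2\mathbb{T})}-e^{-(2L-x)^2/(2\mathbb{T})}\Big),\qquad x<L,
\end{equation*}
obtained from $\mathbb{P}(B_\mathbb{T}\in dx,\tau_L>\mathbb{T})=\mathbb{P}(B_\mathbb{T}\in dx)-\mathbb{P}(B_\mathbb{T}\in dx,\tau_L\le\mathbb{T})$ together with the reflection identity $\mathbb{P}(B_\mathbb{T}\in dx,\tau_L\le\mathbb{T})=\mathbb{P}(B_\mathbb{T}\in d(2L-x))$ valid for $x<L$. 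The conditional density we must reproduce is then $p(x)/\mathbb{P}(\tau_L>\mathbb{T})$.

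Next I would identify the acceptance event. Writing $x=\sqrt{\mathbb{T}}\,G_n$, the while-loop rejects whenever $x>L$ or $-\tfrac{\mathbb{T}}{2L}\ln(U_n)>L-x$, so the proposal is accepted exactly when $x\le L$ and $-\tfrac{\mathbb{T}}{2L}\ln(U_n)\le L-x$. Since $-\ln(U_n)$ is $\mathrm{Exp}(1)$, integrating out the independent variable $U_n$ gives, for $x<L$, the acceptance weight
\begin{equation*}
w(x)=\mathbb{P}\Big(-\tfrac{\mathbb{T}}{2L}\ln U\le L-x\Big)=1-e^{-\frac{2L}{\mathbb{T}}(L-x)}.
\end{equation*}
By the standard rejection-sampling identity (exactly the $\mathbb{E}[\psi(\mathcal{Y})]=\nu(\psi)/\nu(1)$ mechanism already used in the proof of Proposition \ref{prop:BR1}), the law of the outcome $\mathcal{Y}$ has density proportional to the product of the proposal density $\tfrac{1}{\sqrt{2\pi\mathbb{T}}}e^{-x^2/(2\mathbb{T})}$ and the weight $w(x)\mathbb{1}_{\{x<L\}}$.

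The heart of the argument is then a one-line algebraic check: the product equals
\begin{equation*}
\frac{1}{\sqrt{2\pi\mathbb{T}}}\Big(e^{-x^2/(2\mathbb{T})}-e^{-x^2/(2\mathbb{T})-\frac{2L}{\mathbb{T}}(L-x)}\Big)\mathbb{1}_{\{x<L\}},
\end{equation*}
and completing the square in the second exponent via $x^2+4L^2-4Lx=(x-2L)^2$ turns it into precisely $p(x)$. Thus the accepted density is proportional to $p$, and after normalization it is exactly $p(x)/\mathbb{P}(\tau_L>\mathbb{T})$, the conditional law of $B_\mathbb{T}$ given $\tau_L>\mathbb{T}$; the normalizing constant is handled automatically by the rejection ratio, so no separate computation of $\mathbb{P}(\tau_L>\mathbb{T})$ is needed. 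I expect the completing-the-square step to be the only genuinely non-routine point, the reflection-principle density and the termination of the sampler (its acceptance probability is strictly positive since $w(x)>0$ on a Gaussian-charged set) being standard; I would note termination briefly to justify that $\mathcal{Y}$ is almost surely well defined.
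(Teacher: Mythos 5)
Your proof is correct and follows essentially the same route as the paper: both arguments are rejection-sampling verifications built on the reflection-principle (sub-)density of $B_\mathbb{T}$ on $\{\tau_L>\mathbb{T}\}$ together with the same completing-the-square identity $x^2+4L^2-4Lx=(x-2L)^2$, which turns the exponential acceptance condition into the image term of the reflected density. The only difference is bookkeeping: the paper takes the truncated Gaussian $g_\mathbb{T}$ as the proposal and derives the acceptance condition from the ratio $f_\mathbb{T}/(c\,g_\mathbb{T})$, whereas you take the plain $\mathcal{N}(0,\mathbb{T})$ proposal, fold the truncation $\{x\le L\}$ into the acceptance weight, and verify in the forward direction that proposal times weight reproduces the target.
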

\begin{proof} The proof is based on classical acceptance/rejection sampling. We first describe the joint distribution of $B_\mathbb{T}$ and $\tau_L>\mathbb{T}$ (see, for instance, Lerche \cite{lerche})
\begin{equation*}
u(\td,x)\,dx := \mathbb{P}(\tau_{L}>\mathbb{T}, B_\mathbb{T} \in dx)=\left(\frac{1}{\sqrt{\mathbb{T}}}\ f_G\left(\frac{x}{\sqrt{\mathbb{T}}}\right) - \frac{1}{\sqrt{\mathbb{T}}}\ f_G \left(\frac{x-2L}{\sqrt{\mathbb{T}}}\right)\right)\,dx,
\end{equation*}
where $f_G$ is the density of a standard Gaussian variate. We introduce $F_G$ into the corresponding cumulative distribution. Then, the previous expression leads to:
\begin{equation}
f_\mathbb{T}(x)\,dx:=\mathbb{P}(B_\mathbb{T}\in dx \vert \tau_{L} > \mathbb{T}) = \frac{1}{\sqrt{\mathbb{T}}}\frac{f_G(x/\sqrt{\mathbb{T}})-f_G((x-2L)/\sqrt{\mathbb{T}})}{F_G(L/\sqrt{\mathbb{T}}) - F_G(-L/\sqrt{\mathbb{T}})}\,dx.
\end{equation}
The following upper-bound is satisfied:
\begin{equation*}
f_\mathbb{T}(x)\leq c\ \frac{f_G(x/\sqrt{\mathbb{T}})}{\sqrt{\mathbb{T}}F_G(L/\sqrt{\mathbb{T}})}\ 1_{]- \infty, L]}(x) =: c\,g_\mathbb{T}(x)\quad \mbox{with}\quad c = \frac{F_G(L/\sqrt{\mathbb{T}})}{F_G(L/\sqrt{\mathbb{T}}) - F_G(-L/\sqrt{\mathbb{T}})}.
\end{equation*}
$g_\mathbb{T}(\cdot)$ corresponds to a distribution function: a centred Gaussian distribution of variance $\mathbb{T}$ conditioned to stay under the value $L$. In the rejection procedure, $g_\mathbb{T}$ stands for the proposal distribution. Therefore, we generate a random variable $Z$ with distribution $g_\mathbb{T}$. This variate is accepted if $c\,Ug_\mathbb{T}(Z)\le f_\mathbb{T}(Z)$, where $U$ is a uniformly distributed random variable, independent of $Z$.
The condition just mentioned is equivalent to:
\begin{equation*}
U\le  1 - \exp\left(\frac{2L}{\mathbb{T}}(Z-L)\right).
\end{equation*}
If $G$ stands for a standard Gaussian r.v., then the previous condition is equivalent to:
\[
\sqrt{\mathbb{T}}\,G \le L\quad\mbox{and}\quad -\frac{\mathbb{T}}{2 L}\ln(1-U)\le L-\sqrt{\mathbb{T}}\,G,
\]
the acceptance condition appearing in Algorithm $(C\!B\!M)_{\mathbb{T}}^L$.
\end{proof}
First, we described the generation of the conditional Brownian motion in Proposition \ref{prop:CBM}. As explained in the previous section, we relate the distributions of the Brownian paths to the diffusion paths using the classical Girsanov transformation. An interesting application of this transformation is the simulation of a diffusion value at a fixed time $\mathbb{T}$ subject to $\tau_L>\mathbb{T}$. For a general statement, we consider a diffusion process starting at time $\mathbb{T}_0<\mathbb{T}$ with the value $Y_{\mathbb{T}_0}=y$. This corresponds then to the unique strong solution of the equation:
\begin{equation}
dY_t=\alpha(t,Y_t)\,dt+dB_t,\quad \forall t\ge \mathbb{T}_0,\quad\mbox{and}\quad Y_{\mathbb{T}_0}=y<L,
\label{numerobis}
\end{equation}
where $(B_t)_{t\ge 0}$ is a standard one-dimensional Brownian motion. The algorithm becomes the following algorithm (see  Figure \ref{fig1}).
\begin{figure}
\centering
\includegraphics[width=8cm]{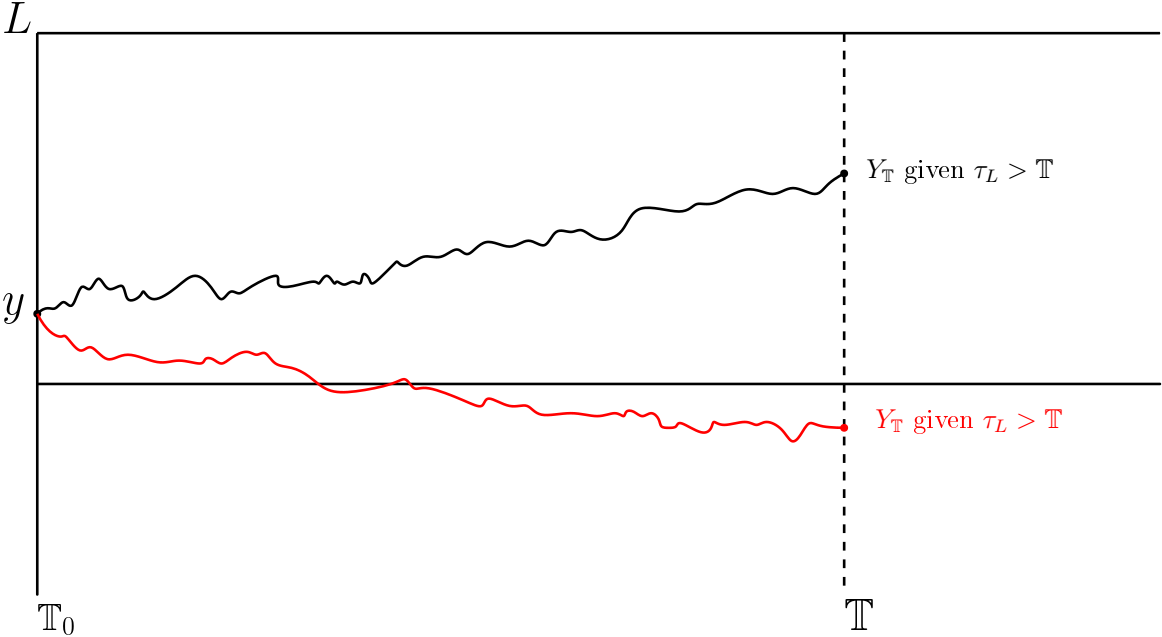}
\caption{Trajectories of the conditioned diffusion}
\label{fig1}
\end{figure}
\begin{framed}
\centerline{\sc Conditioned diffusion $Y_\mathbb{T}$ given $\tau_L>\mathbb{T}$ -- Algorithm $(C\!D)_{\mathbb{T}_0,\mathbb{T}}^{y,L}$ }
\emph{\begin{enumerate}
\item Let $(U_n)_{n\geq 1}$ be independent uniformly distributed  random variables on $[0,\kappa]$. 
\item Let $(E_n)_{n\geq 1}$ be independent exponentially distributed r.v. with an average of $1/\kappa$.
\end{enumerate}
 The sequences $(U_n)_{n\ge 1}$ and $(E_n)_{n\geq 1}$ are assumed to be independent.}\\[5pt]
\noindent {\bf Initialisation:} $n=1$.\\[2pt]
{\bf Step 1.} Set $\mathcal{Y}=y$, $\mathcal{T}=\mathbb{T}_0$ and generate  $\mathcal{Z}\sim (C\!B\!M)_{E_n}^{L-\mathcal{Y}}$.\\[5pt]
{\bf Step 2.} While $\mathcal{T}+E_n<\mathbb{T}$ and $U_n> \gamma(\mathcal{T}+E_n,\mathcal{Y}+\mathcal{Z})$ do
\begin{itemize}
\item $\mathcal{T} \leftarrow \mathcal{T} + E_n$
\item $n\leftarrow n+1$
\item $\mathcal{Y}\leftarrow\mathcal{Y}+\mathcal{Z}$
\item Generate $\mathcal{Z}\leftarrow (C\!B\!M)_{E_n}^{L-\mathcal{Y}}$.
\end{itemize}
{\bf Step 3.}  If $\mathcal{T} +E_n > \td$, then generate $\mathcal{Z}\sim(C\!B\!M)_{\td-\mathcal{T}}^{L-\mathcal{Y}}$, set $\mathcal{Y}\leftarrow \mathcal{Y}+\mathcal{Z}$ and $V \sim \mathcal{U}([0,1])$ independent of all other variates otherwise set $n\leftarrow n+1$ and go to \emph{Step 1.}\\[2pt]
{\bf Step 4.} If $V\cdot\exp(\beta_+) > \exp(\beta(\mathbb{T},\mathcal{Y}))$ then set $n\leftarrow n+1$ and go to \emph{Step 1}.\\[2pt]
{\bf Outcome:}  The random variable $\mathcal{Y}$.
\end{framed}
\begin{proposition} Let us consider $(Y_t)_{t\ge \mathbb{T}_0}$ to be the diffusion defined by \eqref{numerobis} and $\tau_L$ the associated first passage time through level $L$:
\begin{equation}
\tau_L:=\inf\{t\ge \mathbb{T}_0:\ Y_t\ge L\}.
\label{def:tauL2}
\end{equation}
Under Assumptions \ref{assum20}, \ref{assum24} and \ref{assum25}, both the outcome $\mathcal{Y}$ of Algorithm $(C\!D)_{\mathbb{T}_0,\mathbb{T}}^{y,L}$ and $Y_\mathbb{T}$ 
subject to $\tau_L>\mathbb{T}$ have the same distribution.
\label{prop:SCD}
\end{proposition}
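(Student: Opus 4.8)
The plan is to mirror the rejection-sampling computation carried out for Proposition \ref{prop:BR1}, now incorporating the conditioning $\{\tau_L>\mathbb{T}\}$ through the building block $(C\!B\!M)$. Fix a Brownian motion with $B_{\mathbb{T}_0}=0$, so that $y+B_t$ starts at $y$, write $\mathcal{N}$ for the number of visits to Step 1 before acceptance and set $\nu(\psi):=\mathbb{E}[\psi(\mathcal{Y})\mathbf{1}_{\{\mathcal N=1\}}]$ for a nonnegative test function $\psi$. I would start from
\[
\mathbb{E}[\psi(\mathcal Y)]=\frac{\nu(\psi)}{\nu(1)},
\]
and decompose $\nu(\psi)$ according to the number $n$ of exponential sub-intervals needed to cover $[\mathbb{T}_0,\mathbb{T}]$, exactly as in the event $A_n$ of Proposition \ref{prop:BR1}. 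Integrating out the uniform variables on $[0,\kappa]$ turns each passed intermediate test into a factor $\kappa-\gamma(\mathcal T_k,\mathcal Y_k)$, and integrating out the final uniform $V$ produces the weight $e^{\beta(\mathbb{T},\mathcal Y_n)-\beta_+}$; this last step is legitimate because the accepted endpoint satisfies $\mathcal Y_n<L$, so Assumption \ref{assum25} gives $\beta(\mathbb{T},\mathcal Y_n)\le\beta_+$.

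Next I would treat the two families of auxiliary variables separately. Conditionally on $A_n$, the breakpoints $(\mathcal T_1,\dots,\mathcal T_{n-1})$ are distributed as the order statistics of $n-1$ i.i.d. uniform variables on $[\mathbb{T}_0,\mathbb{T}]$, and $\mathbb{P}(A_n)$ carries the Poisson$(\kappa(\mathbb{T}-\mathbb{T}_0))$ weight $e^{-\kappa(\mathbb{T}-\mathbb{T}_0)}(\kappa(\mathbb{T}-\mathbb{T}_0))^{n-1}/(n-1)!$. For the positions, the decisive input is Proposition \ref{prop:CBM}: each nested call $\mathcal Z\sim(C\!B\!M)_{\cdot}^{L-\mathcal Y}$ returns an increment distributed as that of a Brownian motion conditioned not to reach $L$ on the corresponding sub-interval. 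I would then invoke the strong Markov property of Brownian motion at the breakpoints to argue that the chain $(\mathcal Y_1,\dots,\mathcal Y_n)$ is a skeleton, observed at times $(\mathcal T_1,\dots,\mathcal T_{n-1},\mathbb{T})$, of the Brownian motion started at $y$ and conditioned on $\{\tau_L>\mathbb{T}\}$. Summing the resulting series in $n$ reconstructs $\exp\{-\int_{\mathbb{T}_0}^{\mathbb{T}}\gamma(s,y+B_s)\,ds\}$ along this conditioned path, just as the Poisson thinning does in Proposition \ref{prop:BR1}.

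At this stage $\nu(\psi)$ should read, up to the constant $e^{-\beta_+}$, as $\mathbb{E}[\psi(y+B_\mathbb{T})\,\mathbf{1}_{\{\tau_L>\mathbb{T}\}}\,\hat M_\mathbb{T}]$, with $\hat M$ the Girsanov weight of \eqref{eq:Girsanov}. Applying the Girsanov transformation on the event $\{\tau_L>\mathbb{T}\}$ — admissible since it is path-measurable and the change of measure preserves it — identifies this with $\mathbb{E}[\psi(Y_\mathbb{T})\mathbf{1}_{\{\tau_L>\mathbb{T}\}}]$. Taking $\psi\equiv1$ yields $\nu(1)=e^{-\beta_+}\mathbb{P}(\tau_L>\mathbb{T})$, and the ratio collapses to $\mathbb{E}[\psi(Y_\mathbb{T})\mid\tau_L>\mathbb{T}]$, which is the claim.

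The step I expect to be the main obstacle is the one where the increment-wise conditioning supplied by the nested $(C\!B\!M)$ calls is matched with the single global event $\{\tau_L>\mathbb{T}\}$. One must verify that concatenating increments, each conditioned only on avoiding $L$ over its own sub-interval, indeed reproduces the law of the globally conditioned Brownian skeleton, and that this identification survives the $\gamma$- and $\beta$-reweightings so that the Poisson reconstruction of $\int\gamma$ may legitimately be carried out along the conditioned rather than the free path. This reconciliation of local and global conditioning is precisely where the argument departs from Proposition \ref{prop:BR1} and demands care; once it is in place, the remaining manipulations are the routine bookkeeping already met there.
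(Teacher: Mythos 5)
Your proposal retraces the paper's own proof of Proposition \ref{prop:SCD} essentially step by step: the rejection identity $\mathbb{E}[\psi(\mathcal Y)]=\nu(\psi)/\nu(1)$, the decomposition along the events $A_n$, integration of the uniform variates to produce the factors $\kappa-\gamma$ and $e^{\beta(\mathbb{T},\cdot)-\beta_+}$ (with Assumption \ref{assum25} making the latter an admissible acceptance probability), the order-statistics/Poisson bookkeeping, the series reconstruction of $\exp\{-\int_{\mathbb{T}_0}^{\mathbb{T}}\gamma\}$, and the final Girsanov step performed on the survival event so that all constants (including the Brownian survival probability) cancel in the ratio. So there is no methodological divergence. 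The problem is the step you flag in your closing paragraph and then leave open: the identification of the law of the chain built from nested $(C\!B\!M)$ calls with the law of the skeleton of a Brownian motion conditioned on the single global event $\{\tau^B_{L-y}>\mathbb{T}-\mathbb{T}_0\}$. Since the whole computation rests on substituting one law for the other inside the expectation, declaring it "the main obstacle" without resolving it leaves the proposal an outline with a hole at its only nontrivial joint.

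Moreover, this identification is not a consequence of the strong Markov property, and as stated it fails for fixed breakpoint times. Write $u_t(z;h)=\mathbb{P}(B_t\in dz,\ \sup_{s\le t}B_s<h)/dz$ and $p_t(h)=\mathbb{P}(\sup_{s\le t}B_s<h)$, and suppose a single breakpoint occurs at time $t\in(\mathbb{T}_0,\mathbb{T})$, with legs of lengths $a=t-\mathbb{T}_0$ and $b=\mathbb{T}-t$. The pair (intermediate position, final position) produced by two chained $(C\!B\!M)$ calls has density
\begin{equation*}
\frac{u_{a}(x-y;L-y)}{p_{a}(L-y)}\cdot\frac{u_{b}(z-x;L-x)}{p_{b}(L-x)},
\qquad\text{whereas the conditioned skeleton has density}\qquad
\frac{u_{a}(x-y;L-y)\,u_{b}(z-x;L-x)}{p_{a+b}(L-y)} .
\end{equation*}
The Markov property does factorize the killed sub-densities, but global conditioning divides by one constant, while the chained local conditioning divides by the position-dependent survival probability $p_{b}(L-x)$; equivalently, the globally conditioned motion is a Doob $h$-transform whose transitions carry the weight of surviving the \emph{remaining} horizon, a weight the $(C\!B\!M)$ increments never see. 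Already the intermediate marginals differ ($u_a(x-y;L-y)/p_a(L-y)$ versus $u_a(x-y;L-y)\,p_b(L-x)/p_{a+b}(L-y)$), and since this discrepancy is a nonconstant function of the path positions it cannot cancel in the ratio $\nu(\psi)/\nu(1)$, nor be absorbed by the bounded weights $\prod_k(\kappa-\gamma)$: for instance with $\alpha\equiv 0$ and any admissible $\kappa>0$ the output of the algorithm reduces to the endpoint of the chained $(C\!B\!M)$ construction, whose law has a strictly different far tail (ratio tending to $p_{a+b}(L-y)/p_{a}(L-y)<1$) from that of $B_\mathbb{T}$ conditioned on $\{\tau_L>\mathbb{T}\}$. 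You should know that the paper's own proof dispatches this point in a single sentence ("on the event $A_n$, $(\mathcal Y_1,\dots,\mathcal Y_{n+2})$ has the same distribution than\ldots"), i.e.\ with no more justification than you give; your reconstruction is therefore faithful to the published argument, but the reconciliation of local and global conditioning that you correctly isolate is a genuine gap, and closing it requires either a proposal carrying the $h$-transform correction $p_{\mathbb{T}-t}(L-\cdot)$ or an acceptance weight that accounts for it, not merely the "routine bookkeeping" that remains once the identity is assumed.
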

\begin{proof} The algorithm $(C\!D)_{\mathbb{T}_0,\mathbb{T}}^{y,L}$ is clearly based on  rejection sampling. The proof therefore uses similar arguments to those pointed out in Proposition \ref{prop:BR1}.  We thus denote the successive values of $\mathcal{Y}$ ($\mathcal{T}$ and $\mathcal{Z}$, respectively) by $\mathcal{Y}_n$ ($\mathcal{T}_n$ and $\mathcal{Z}_n$, respectively). We also introduce a sequence of times $(\mathcal{E}_n)_{n\ge 1}$ defined by $\mathcal{E}_{n+1}=\mathcal{E}_n+E_{n+1}$ with $\mathcal{E}_0=\td_0$. We finally introduce $\mathcal{N}$ the number of Step 1 used until the algorithm stops. 
Because the algorithm is an acceptance/rejection sampling (see Proposition \ref{prop:rejet}),
we have for any nonnegative measurable function $\psi$: 
\begin{align*}
\mathbb{E}[\psi(\mathcal{Y})]&=\frac{\mathbb{E}[\psi(\mathcal{Y})1_{\{ \mathcal{N}=1 \}}]}{\mathbb{P}(\mathcal{N}=1)}=\frac{\nu(\psi)}{\nu(1)}\quad \mbox{where}\quad \nu(\psi):=\mathbb{E}[\psi(\mathcal{Y})1_{\{ \mathcal{N}=1 \}}].
\end{align*}
In the following computations, we denote the particular event $\{ \mathcal{E}_n \leq \mathbb{T} <\mathcal{E}_{n+1}\}$ by $A_n$ and the event by $P_n$:
\[
P_n:=\{ U_1 > \gamma (\mathcal{E}_1,\mathcal{Y}_{1}+\mathcal{Z}_1), \dots , U_n > \gamma (\mathcal{E}_n, \mathcal{Y}_{n}+\mathcal{Z}_n) \},\ \mbox{for}\ n\ge 1\ \mbox{and}\ P_0=\Omega.
\]
We therefore obtain that:
\[
\nu(\psi)=\sum_{n\ge 0} \mathbb{E}\Big[ \psi(\mathcal{Y}_{n+2})1_{P_n}1_{\{V\cdot \exp(\beta_+) < \exp(\beta(\mathbb{T},\mathcal{Y}_{n+2}))\}}1_{A_n} \Big].
\]
Integrating with respect to all uniformly distributed random variables $(U_n)$ and with respect to $V$ leads to
\[
\nu(\psi)=\sum_{n\ge 0} \frac{1}{\kappa^n}\mathbb{E}\Big[\psi(\mathcal{Y}_{n+2}) \prod_{k=1}^n \left( \kappa - \gamma(\mathcal{E}_k,\mathcal{Y}_k+\mathcal{Z}_k)\right)\exp(\beta(\mathbb{T},\mathcal{Y}_{n+2})-\beta_+) 1_{A_n} \Big]
\]
We note that given $A_n$, $(\mathcal{E}_1-\mathbb{T}_0,\ldots,\mathcal{E}_{n}-\mathbb{T}_0)$ has the same distribution as $(V^{(1)},\ldots, V^{(n)})$ an ordered $n$-tuple of uniform random variables $(V_1,\ldots, V_{n})$ on $[0,\mathbb{T}-\mathbb{T}_0]$. The probability of event $A_n$ can be computed using a Poisson distribution of parameter $\kappa (\mathbb{T}-\mathbb{T}_0)$. Finally, on event $A_n$, $(\mathcal{Y}_1, \mathcal{Y}_2,\ldots \mathcal{Y}_{n+2})$ has the same distribution as: \[(y,y+B_{V^{(1)}},\ldots, y+B_{V^{(n)}},y+B_{\mathbb{T}-\mathbb{T}_0})\ \mbox{subject to}\ \tau_{L-y}^B:=\inf\{t\ge 0:\ B_t\ge L-y\}>\mathbb{T}-\mathbb{T}_0\] with $(B_t)_{t\ge 0}$ a standard Brownian motion independent of the $n$-tuple $(V_1,\ldots, V_{n})$. Thus,
\begin{align*}
\nu(\psi)&=\sum_{n\ge 0} \frac{1}{\kappa^n} \mathbb{E}\Big[\psi(y+B_{\mathbb{T}-\mathbb{T}_0})\prod_{k=1}^n \Big(\kappa-\gamma(\mathbb{T}_0+V^{(k)},y+B_{V^{(k)}})\Big) \\
&\qquad\times \exp(\beta(\mathbb{T},y+B_{\mathbb{T}-\mathbb{T}_0})-\beta_+)\Big\vert \tau_{L-y}^B>\mathbb{T}-\mathbb{T}_0\Big]\frac{\kappa^n(\mathbb{T}-\mathbb{T}_0)^{n}}{n!}\,e^{-\kappa (\mathbb{T}-\mathbb{T}_0)}\\
&=\sum_{n\ge 0}\mathbb{E}\Big[\psi(y+B_{\mathbb{T}-\mathbb{T}_0})\prod_{k=1}^n \Big(\kappa-\gamma(\mathbb{T}_0+V_{k},y+B_{V_{k}})\Big) \\
&\qquad\times \exp(\beta(\mathbb{T},y+B_{\mathbb{T}-\mathbb{T}_0})-\beta_+)\Big\vert \tau_{L-y}^B>\mathbb{T}-\mathbb{T}_0\Big]\frac{(\mathbb{T}-\mathbb{T}_0)^{n}}{n!}\,e^{-\kappa (\mathbb{T}-\mathbb{T}_0)}.
\end{align*}
Taking the expectation with respect to the uniformly distributed variates $V_k$ leads to:
\begin{align*}
\nu(\psi)&=\sum_{n\ge 0} \mathbb{E}\Big[\psi(y+B_{\mathbb{T}-\mathbb{T}_0})\Big(\kappa-\frac{1}{\mathbb{T}-\mathbb{T}_0}\int_{0}^{\mathbb{T}-\mathbb{T}_0}\gamma(\mathbb{T}_0+s,y+B_{s})\,ds\Big)^n \\
&\qquad\times \exp(\beta(\mathbb{T},y+B_{\mathbb{T}-\mathbb{T}_0})-\beta_+)\Big\vert \tau_{L-y}^B>\mathbb{T}-\mathbb{T}_0\Big]\frac{(\mathbb{T}-\mathbb{T}_0)^{n}}{n!}\,e^{-\kappa (\mathbb{T}-\mathbb{T}_0)}\\
&=\mathbb{E}\Big[\psi(y+B_{\mathbb{T}-\mathbb{T}_0})\exp\Big(-\int_{0}^{\mathbb{T}-\mathbb{T}_0}\gamma(\mathbb{T}_0+s,y+B_{s})\,ds\Big)\\
&\qquad\times \exp(\beta(\mathbb{T},y+B_{\mathbb{T}-\mathbb{T}_0})-\beta_+) \Big\vert \tau_{L-y}^B>\mathbb{T}-\mathbb{T}_0\Big].
\end{align*}
Because $(y+B_{t})_{t\ge 0}$ subject to $\tau_{L-y}^B>\mathbb{T}-\mathbb{T}_0$ has the same distribution as $(B_t)_{t\ge \mathbb{T}_0}$ subject at once to $B_{\mathbb{T}_0}=y$ and $\tau_L^B\circ\theta_{\mathbb{T}_0}>\mathbb{T}$, where $\theta$ stands for the translation operator, we obtain:
\begin{align*}
\nu(\psi)
&=\mathbb{E}\Big[\psi(B_{\mathbb{T}})\exp\Big(-\int_{\mathbb{T}_0}^{\mathbb{T}}\gamma(s,B_{s})\,ds+\beta(\mathbb{T},B_{\mathbb{T}})-\beta_+\Big) \Big\vert B_{\mathbb{T}_0}=y,\ \tau_L^B\circ\theta_{\mathbb{T}_0}>\mathbb{T}\Big].
\end{align*}
We now modify the expression under review. We introduce: 
\[
\hat{\nu}(\psi):=\mathbb{E}\Big[\psi(B_{\mathbb{T}})1_{\{B_t<L,\ \forall t\in [\mathbb{T}_0,\mathbb{T}]\}}\exp\Big(-\int_{\mathbb{T}_0}^{\mathbb{T}}\gamma(s,B_{s})\,ds+\beta(\mathbb{T},B_{\mathbb{T}})-\beta(\mathbb{T}_0,y)\Big) \Big\vert B_{\mathbb{T}_0}=y\Big].
\]
Also:
\begin{equation}
\mathbb{E}[\psi(\mathcal{Y})]=\frac{\nu(\psi)}{\nu(1)}=\frac{\hat{\nu}(\psi)}{\hat{\nu}(1)}.\label{eq:rapp}
\end{equation}
Because $(M_t)_{t\ge 0}$ is defined by:
\[
M_t:=\exp\Big(-\int_{\mathbb{T}_0}^{\mathbb{T}_0+t}\gamma(s,B_{s})\,ds+\beta(\mathbb{T}_0+t,B_{\mathbb{T}_0+t})-\beta(\mathbb{T}_0,y)\Big),
\]
and is the exponential martingale appearing in the Girsanov transformation, we obtain by change of measure:
\[
\hat{\nu}(\psi)=\mathbb{E}\Big[\psi(Y_{\mathbb{T}})1_{\{Y_t<L,\ \forall t\in [\mathbb{T}_0,\mathbb{T}]\}}\Big],
\]
where $(Y_t)_{t\ge \mathbb{T}_0}$ stands for diffusion \eqref{numerobis}. The ratio \eqref{eq:rapp} permits us to conclude the proof:
\[
\mathbb{E}[\psi(\mathcal{Y})]=\mathbb{E}[\psi(Y_{\mathbb{T}})\vert \tau_L>\mathbb{T} ],
\]
which is the stopping time $\tau_L$ being introduced in the statement \eqref{def:tauL2}.

\end{proof}
Finally, we can write an algorithm that exactly generates  the distribution of the couple $(\tau_L\wedge \mathbb{T}, Y_{\tau_L\wedge \mathbb{T}})$, where $(Y_t)_{t\ge \mathbb{T}_0}$ stands for the continuous diffusion defined in \eqref{numerobis} and $\tau_L$, the first passage time defined in \eqref{def:tauL2} (see Figure \ref{fig2}).
\begin{figure}
\centering
\includegraphics[width=8cm]{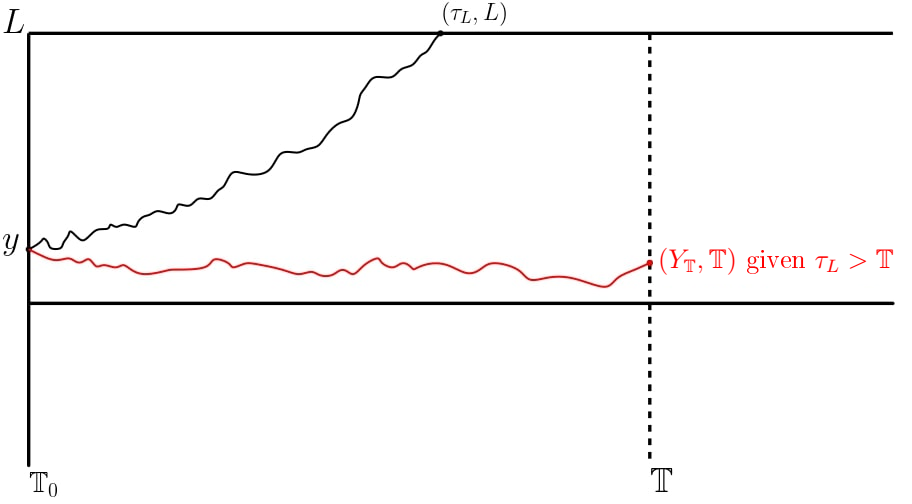}
\caption{Two typical trajectories of continuous diffusion.}
\label{fig2}
\end{figure}

\begin{framed}
\centerline{\sc Stopped diffusion: $(\tau_L\wedge \mathbb{T}, Y_{\tau_L\wedge \mathbb{T}})$  -- Algorithm $(S\!D)_{\mathbb{T}_0,\mathbb{T}}^{y,L}$ }

\vspace*{0.5cm}
\noindent {\bf Step 1.} Generate $\mathcal{T}\sim  (H\!Z)_{\mathbb{T}_0}^{y,L}$ (defined in Remark \ref{rem:adapt}).\\[5pt]
{\bf Step 2.} If $\mathcal{T}<\mathbb{T}$ then set $\mathcal{Y}\leftarrow L$ otherwise generate $\mathcal{Y}\sim (C\!D)_{\mathbb{T}_0,\mathbb{T}}^{y,L}$ and set $\mathcal{T}\leftarrow \mathbb{T}$.\\[2pt]
{\bf Outcome:}  The random couple $(\mathcal{T},\mathcal{Y})$.
\end{framed}
\begin{proposition} We consider $(Y_t)_{t\ge \mathbb{T}_0}$ to be the diffusion defined by \eqref{numerobis} and $\tau_L$ to be the associated first passage time \eqref{def:tauL2}. Under Assumptions \ref{assum20}, \ref{assum24} and \ref{assum25}, both the outcome $(\mathcal{T},\mathcal{Y})$ of Algorithm $(S\!D)_{\mathbb{T}_0,\mathbb{T}}^{y,L}$ and $(\tau_L\wedge \mathbb{T}, Y_{\tau_L\wedge \mathbb{T}})$ have the same distribution.
\label{prop:SD}
\end{proposition}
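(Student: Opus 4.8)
The plan is to test the law of the output couple against the target law using an arbitrary bounded measurable function $\psi:\mathbb{R}_+\times\mathbb{R}\to\mathbb{R}_+$, and to exploit the fact that Algorithm $(S\!D)_{\mathbb{T}_0,\mathbb{T}}^{y,L}$ branches cleanly on the single event $\{\mathcal{T}<\mathbb{T}\}$ produced in Step 1 (this is a plain two-way branching, not a rejection loop, so no acceptance ratio is involved). First I would record that, by Proposition \ref{prop:HZ} together with Remark \ref{rem:adapt}, the variable $\mathcal{T}$ generated in Step 1 has exactly the law of $\tau_L$, the first passage time of the diffusion \eqref{numerobis}, and that the randomness used by $(C\!D)_{\mathbb{T}_0,\mathbb{T}}^{y,L}$ in the ``otherwise'' branch is drawn independently of Step 1. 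The decomposition then reads
\[
\mathbb{E}[\psi(\mathcal{T},\mathcal{Y})]
=\mathbb{E}\big[\psi(\mathcal{T},L)\,\mathbf{1}_{\{\mathcal{T}<\mathbb{T}\}}\big]
+\mathbb{E}\big[\psi(\mathbb{T},\mathcal{Y})\,\mathbf{1}_{\{\mathcal{T}\ge\mathbb{T}\}}\big].
\]

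For the first term, since $\mathcal{T}\sim\tau_L$ I would replace $\mathcal{T}$ by $\tau_L$ and invoke path continuity of the diffusion \eqref{numerobis}: on $\{\tau_L<\mathbb{T}\}\subset\{\tau_L<\infty\}$ one has $\tau_L\wedge\mathbb{T}=\tau_L$ and, because the continuous path reaches the level for the first time exactly at the value $L$, $Y_{\tau_L\wedge\mathbb{T}}=Y_{\tau_L}=L$. Hence this term equals $\mathbb{E}[\psi(\tau_L\wedge\mathbb{T},Y_{\tau_L\wedge\mathbb{T}})\mathbf{1}_{\{\tau_L<\mathbb{T}\}}]$. For the second term I would use the independence just noted to factor the indicator $\mathbf{1}_{\{\mathcal{T}\ge\mathbb{T}\}}$ (which depends only on Step 1) away from $\mathcal{Y}\sim(C\!D)_{\mathbb{T}_0,\mathbb{T}}^{y,L}$, giving $\mathbb{P}(\mathcal{T}\ge\mathbb{T})\,\mathbb{E}[\psi(\mathbb{T},\mathcal{Y})]$. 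By Proposition \ref{prop:HZ}, $\mathbb{P}(\mathcal{T}\ge\mathbb{T})=\mathbb{P}(\tau_L\ge\mathbb{T})$, and by Proposition \ref{prop:SCD}, $\mathbb{E}[\psi(\mathbb{T},\mathcal{Y})]=\mathbb{E}[\psi(\mathbb{T},Y_\mathbb{T})\mid\tau_L>\mathbb{T}]$.

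To finish I would invoke that $\tau_L$, as the first passage time of a continuous diffusion, carries no atom at $\mathbb{T}$, so $\{\tau_L\ge\mathbb{T}\}$ and $\{\tau_L>\mathbb{T}\}$ coincide up to a $\mathbb{P}$-null set; the product above therefore collapses to $\mathbb{E}[\psi(\mathbb{T},Y_\mathbb{T})\mathbf{1}_{\{\tau_L>\mathbb{T}\}}]$, which on this event equals $\mathbb{E}[\psi(\tau_L\wedge\mathbb{T},Y_{\tau_L\wedge\mathbb{T}})\mathbf{1}_{\{\tau_L\ge\mathbb{T}\}}]$ since there $\tau_L\wedge\mathbb{T}=\mathbb{T}$ and $Y_{\tau_L\wedge\mathbb{T}}=Y_\mathbb{T}$. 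Adding the two contributions recovers $\mathbb{E}[\psi(\tau_L\wedge\mathbb{T},Y_{\tau_L\wedge\mathbb{T}})]$, and since $\psi$ is arbitrary the two laws agree. The only genuinely delicate points, which I would state rather than compute, are the reduction of the two auxiliary outputs to their target conditional laws (supplied ready-made by Propositions \ref{prop:HZ} and \ref{prop:SCD}) and the handling of the boundary event $\{\tau_L=\mathbb{T}\}$; the former is the crux of the argument, but it is precisely what the preceding propositions were built to provide.
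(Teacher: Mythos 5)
Your proof is correct and follows essentially the same route as the paper's: branch on whether the first passage occurs before $\mathbb{T}$ (where path continuity forces $Y_{\tau_L}=L$, handled by the $(H\!Z)$ output) or not (where Proposition \ref{prop:SCD} supplies the conditional law of $Y_\mathbb{T}$ given $\tau_L>\mathbb{T}$). The paper's own proof is a two-sentence informal version of this very decomposition; your test-function computation, the independence factorization, and the explicit treatment of the null boundary event $\{\tau_L=\mathbb{T}\}$ simply render it rigorous.
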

\begin{proof} Either $\tau_L$ is smaller than $\mathbb{T}$ which corresponds to $\tau_L\wedge \mathbb{T}=\tau_L$ and $Y_{\tau_L\wedge \mathbb{T}}=L$ or $\tau_L$ is larger than $\mathbb{T}$; therefore the distribution of $Y_{\tau_L\wedge \mathbb{T}}$ is the conditional distribution of $Y_\mathbb{T}$ subject to $\tau_L>\mathbb{T}$.
\end{proof}
\section{Simulation of the first passage time for stopped jump diffusion}
\label{sec:algostopdiff}
The goal of this section is to generate the first passage time of jump diffusion. This challenging objective was already considered by Giesecke and Smelov \cite{giesecke-smelov} who adapted the localisation approach introduced in \cite{chen-huang} to that particular context. In this study, we propose a different method based on the work of Herrmann and Zucca \cite{herrmann-zucca-exact} and on the generation of Bessel processes. Jump diffusion is characterised by the stochastic differential equation between the jump times \eqref{afterijump} and the jump height described in \eqref{afterijump1}. We already discussed the possibility of reducing the considered model. Thus, we shall first consider the following reduced model. We introduce  $(T_n)_{n\ge 1}$ to be a sequence of jump times, and the time spent between two consecutive jumps is exponentially distributed; therefore, $T_n=\sum_{k=1}^n E_k$, where $(E_k)_{k\ge 1}$ is a sequence of independent exponentially distributed random variables with an average of $1/\lambda$. The initial position of the diffusion is given by 
$Y_0=y_0$ and the jump diffusion under consideration satisfies 
%
%
\begin{equation}
dY_{t} = \alpha(t,Y_{t})\,dt + dB_t, \quad \mbox{for}\ T_n<t<T_{n+1},\quad n\in\mathbb{N},
\label{afterijump3}
\end{equation} 
the jumps modify the trajectories as follows:
\begin{equation}
Y_{T_n}=Y_{T_{n}-}+j(T_n,Y_{T_n-},\xi_n),\quad \forall n\in\mathbb{N}, 
\label{afterijump4}
\end{equation}
where $j : \mathbb{R}_+ \times \mathbb{R}\times\mathcal{E}\rightarrow \mathbb{R}$ denotes the jump function and $(\xi_n)_{n\ge 1}$ stands for a sequence of independent random variables with distribution function $\phi/\lambda$ (also independent of the  Brownian motion $(B_t)_{t\ge 0}$ and independent of the sequence $(T_n)_{n\ge 0}$). We associate the first passage time through level $L$, where $L>y_0$, with the stochastic process defined by \eqref{afterijump3}--\eqref{afterijump4} as follows:
\begin{equation}
\tau_L:=\inf\{t\ge 0:\ Y_t\ge L\}.
\label{def:fptj}
\end{equation}
Because the jump times and the behaviour of the diffusion process between the jump times are independent, we can use the approach developed in the continuous diffusion case to simulate jump diffusions. The critical argument is that $(t,Y_t)_{t\ge 0}$ is a Markovian stochastic process. Thus, the diffusion paths can be constructed in a piecewise Markovian way. We thus present the algorithm for the generation of the stopped first passage time $\tau_L\wedge \td$ where $\td$ stands for a fixed time.
\begin{framed}
\centerline{\sc Stopped Jump diffusion $(\tau_L\wedge \td)$  -- Algorithm $(S\!J\!D)_{\td}^{y,L}$ }

\emph{\begin{enumerate}
\item Let $(E_n)_{n\geq 1}$ be independent exponentially distributed r.v. with average $1/\lambda$.
\item Let $(\xi_n)_{n\geq 1}$ be independent r.v. with distribution function $\phi/\lambda$.
\end{enumerate}
The sequences $(E_n)_{n\geq 1}$ and $(\xi_n)_{n\geq 1}$ are assumed to be independent.}\\[2pt]
\noindent {\bf Initialisation.} $n=0$, $\mathcal{T}_s=0$ (starting time), $\mathcal{T}_f=0$ (final time), $\mathcal{Y}=y$, $\mathcal{Z}=y$.\\[2pt]
{\bf Step 1.} While ($\mathcal{T}_f<\td$ and $\mathcal{Y}<L$ and $\mathcal{Z}<L$)
do
\begin{itemize}
\item $n\leftarrow n+1$
\item $\mathcal{T}_s\leftarrow\mathcal{T}_f$
\item $\mathcal{T}_f\leftarrow\mathcal{T}_f+E_n$
\item Generate $(\mathcal{S},\mathcal{Z})\sim  (S\!D)_{\mathcal{T}_s,\mathcal{T}_f}^{\mathcal{Y},L}$
\item $\mathcal{Y}\leftarrow \mathcal{Z}+j(\mathcal{T}_f,\mathcal{Z},\xi_n)$
\end{itemize}
{\bf Step 2.} 
\begin{itemize}
\item If $\mathcal{S}>\td$ then set $\mathcal{S}\leftarrow\td$
\item If $\mathcal{S}\le \td$ and $\mathcal{Z}< L$ then $\mathcal{S}\leftarrow\mathcal{T}_f$
\end{itemize}
{\bf Outcome:}  The random variable $\mathcal{S}$.
\end{framed}
\begin{thm} We consider $(Y_t)_{t\ge 0}$ to be the jump diffusion defined by \eqref{afterijump3}--\eqref{afterijump4} and $\tau_L$ to be the associated first passage time \eqref{def:fptj}. Under Assumptions \ref{assum20}, \ref{assum24} and \ref{assum25}, both the outcome $\mathcal{S}$ of Algorithm $(S\!J\!D)_{\td}^{y_0,L}$ and $\tau_L\wedge \td$ have the same distribution.
\label{thm:SJD}
\end{thm}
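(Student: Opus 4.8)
The plan is to build on Proposition \ref{prop:SD}, which already supplies the exact simulation of a stopped \emph{continuous} diffusion on a fixed time window, and to glue such pieces together along the jump times by invoking the strong Markov property of the pair $(t,Y_t)$. The structural fact that drives everything is that between two consecutive jump times the jump diffusion \eqref{afterijump3}--\eqref{afterijump4} coincides with the continuous diffusion \eqref{numerobis} started at the current position; hence on the $n$-th inter-jump interval $[T_{n-1},T_n]=[\mathcal{T}_s,\mathcal{T}_f]$ the quantity produced by one call to $(S\!D)_{\mathcal{T}_s,\mathcal{T}_f}^{\mathcal{Y},L}$ is, conditionally on the entering state $(T_{n-1},Y_{T_{n-1}})$, distributed exactly as $(\sigma_n\wedge T_n,\,Y_{\sigma_n\wedge T_n})$, where $\sigma_n:=\inf\{t\in[T_{n-1},T_n]:Y_t\ge L\}$ is the continuous first passage time restricted to that interval. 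Since $E_n$ is independent of the diffusive dynamics, I would first condition on $E_n=s$, apply Proposition \ref{prop:SD} on the fixed window $[\mathcal{T}_s,\mathcal{T}_s+s]$, and then integrate out $s$.

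Next I would organize the proof around the \emph{data} generated during a run, namely the inter-jump times $(E_n)$, the marks $(\xi_n)$, and the successive outputs $(\mathcal{S}_n,\mathcal{Z}_n)$ of the $(S\!D)$ calls, where the $n$-th call starts from the post-jump value $\mathcal{Y}_{n-1}=\mathcal{Z}_{n-1}+j(\mathcal{T}_{f,n-1},\mathcal{Z}_{n-1},\xi_{n-1})$. The key claim is that this data has the same joint law as the analogous data read off a genuine jump diffusion: the inter-jump times are exponential of parameter $\lambda$ in both cases, the marks have law $\phi/\lambda$, and—by Proposition \ref{prop:SD} together with the strong Markov property at the jump times—the conditional law of $(\mathcal{S}_n,\mathcal{Z}_n,\xi_n)$ given the entering state matches that of $(\sigma_n\wedge T_n,Y_{\sigma_n\wedge T_n},\xi_n)$ given $Y_{T_{n-1}}$. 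This identity is established inductively through a chain rule for conditional distributions, using that each $(S\!D)$ call and each pair $(E_n,\xi_n)$ employ fresh randomness independent of the past, exactly mirroring the conditional independence of successive excursions of the true process under the assumptions \ref{assum20}, \ref{assum24} and \ref{assum25}.

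It then remains to check that the outcome $\mathcal{S}$ is a fixed measurable function $g$ of this data and that the \emph{same} $g$, applied to the true process's data, returns $\tau_L\wedge\mathbb{T}$; combining this with the distributional identity yields $\mathcal{S}\stackrel{d}{=}\tau_L\wedge\mathbb{T}$. Concretely, the while loop scans the intervals in order and halts at the first one in which either a continuous crossing occurs ($\mathcal{Z}_n=L$), or the post-jump value reaches $L$ ($\mathcal{Y}_n\ge L$), or the horizon is passed ($\mathcal{T}_{f,n}\ge\mathbb{T}$), while Step 2 converts the halting state into the capped first passage time. Termination is not an issue: the number of jumps in $[0,\mathbb{T}]$ is Poisson of parameter $\lambda\mathbb{T}$, hence almost surely finite, so the loop performs at most $N_{\mathbb{T}}+1$ iterations. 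I expect the delicate point, and the main obstacle, to be the verification of $g$ on the interval that straddles $\mathbb{T}$, that is when $\mathcal{T}_s<\mathbb{T}<\mathcal{T}_f$: there the $(S\!D)$ call simulates the continuous diffusion \emph{beyond} $\mathbb{T}$, so one must argue that Step 2 correctly discards any crossing located in $(\mathbb{T},\mathcal{T}_f]$. The certificate is carried by $\mathcal{Z}$: when $\mathcal{Z}<L$ there is no continuous crossing on all of $[\mathcal{T}_s,\mathcal{T}_f]$, hence none before $\mathbb{T}$, so $\tau_L>\mathbb{T}$ and the output must be $\mathbb{T}$, the jump sitting at $\mathcal{T}_f>\mathbb{T}$ being irrelevant; whereas $\mathcal{Z}=L$ together with $\mathcal{S}>\mathbb{T}$ signals a continuous crossing after $\mathbb{T}$, again yielding $\mathbb{T}$. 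These are precisely the two branches of Step 2, read as the alternative determined by the value of $\mathcal{S}$ upon entering Step 2.
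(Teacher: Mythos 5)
Your proposal is correct and follows essentially the same route as the paper's proof: it invokes Proposition \ref{prop:SD} on each inter-jump window (after conditioning on the exponential window length), glues the pieces together via the Markov property of $(t,Y_t)$ at the jump times, and concludes with the same case analysis of how the run halts—including the correct exclusive reading of the two branches of Step 2 on the window straddling $\mathbb{T}$—plus an almost-sure termination argument. The only difference is presentational: you formalize the induction as a chain rule for the joint law of the simulation data together with a measurable output map $g$, whereas the paper narrates the identical induction as an iterated case-by-case comparison.
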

\begin{figure}[h]
\centering
\includegraphics[width=7cm]{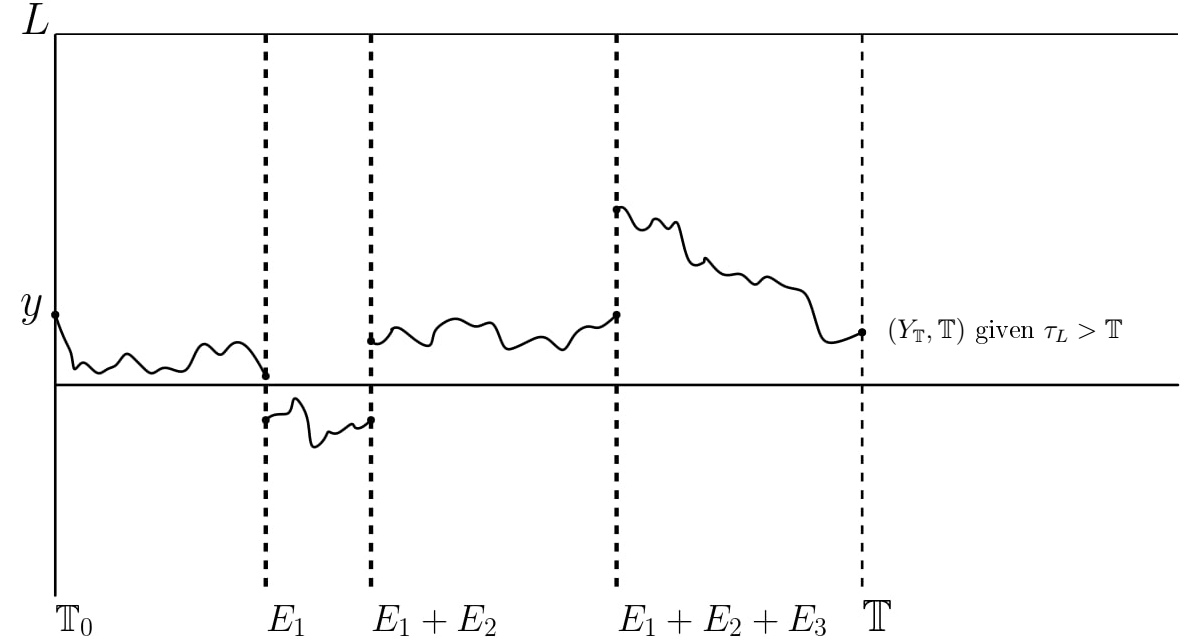}\hspace*{1cm}\includegraphics[width=7cm]{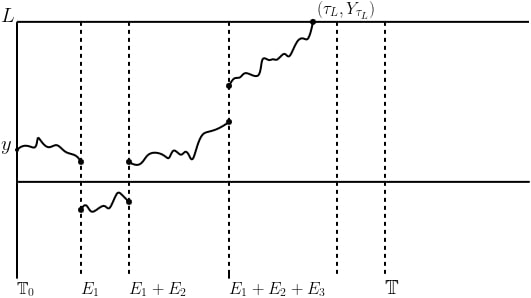}
\label{fig3}
\end{figure}
\begin{figure}[h]
\centering
\includegraphics[width=7cm]{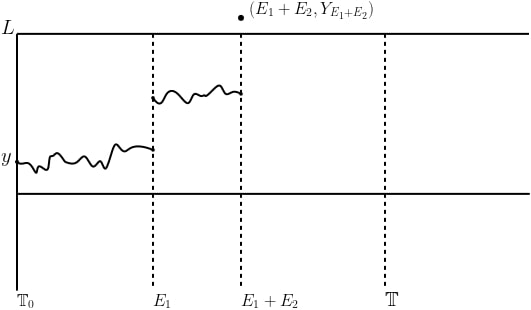}
\caption{Three typical paths representing different scenarios}
\label{fig5}
\end{figure}
\begin{proof} This proof is based on the Markov property of $(t,Y_t)_{t\ge 0}$. We start an iterative procedure. First, we set $\mathcal{T}_s=0$ (starting time) and generate a random variable that represents the first jump time $T_1$. This r.v. is denoted $\mathcal{T}_f=E_1$ in the algorithm (final time). Two situations must therefore be considered (see Figure \ref{fig5}). 
\begin{enumerate}
\item First case: $\mathcal{T}_f>\td$ (i.e., Step 1 only occurs once). Conditional on the event
\emph{"the first jump time is larger than $\td$}", the diffusion behaves like a continuous diffusion process on the interval $[0,\td]$, which is denoted by $(\hat{Y}_t)$. Thus, $Y_t=\hat{Y}_t$ for any $t<\mathcal{T}_f$. The jump times are independent of the Brownian motion driving the diffusive part of the stochastic process. Thus, the results developed in the previous section can be applied. Algorithm $(S\!D)_{\mathcal{T}_s,\mathcal{T}_f}^{y_0,L}$ generates the random couple $(\hat{\tau}_L\wedge \mathcal{T}_f, \hat{Y}_{\hat{\tau}_L\wedge \mathcal{T}_f})$, where $\hat{\tau}_L$ stands for the first passage time of the continuous diffusion $(\hat{Y}_t)$ after time $\mathcal{T}_s$. The random couple is denoted by $(\mathcal{S},\mathcal{Z})$ in Algorithm $(S\!J\!D)_{\td}^{y,L}$. We finally observe different situations:
\begin{itemize}
\item $\hat{\tau}_L\wedge \mathcal{T}_f>\td$, which is equivalent to the Condition $\mathcal{S}>\td$. In such a situation, we can easily deduce that $\hat{\tau}_L\wedge \td=\td=\tau_L\wedge \td$, because $\mathcal{T}_f>\td$. Therefore, we set $\mathcal{S}=\td$ in the algorithm to obtain the distribution identity announced in the statement of Theorem \ref{thm:SJD}.
\item $\hat{\tau}_L\wedge \mathcal{T}_f\le \td$, or  ($\mathcal{S}\le \td$ and $\mathcal{Z}\ge L$). We thus obtain $\tau_L\wedge \td=\tau_L=\tau_L\wedge \mathcal{T}_f=\hat{\tau}_L\wedge \mathcal{T}_f$ which 
is identically distributed as $\mathcal{S}$.
\end{itemize}

\item Second case: $\mathcal{T}_f\le \td$. In this case, the diffusion $(Y_t)$ also corresponds to a continuous diffusion $(\hat{Y}_t)$ on the time interval $[\mathcal{T}_s,\hat{\tau}_L\wedge \mathcal{T}_f[$. Therefore, the previous section explains how to generate $(\hat{\tau}_L\wedge \mathcal{T}_f, \hat{Y}_{\hat{\tau}_L\wedge \mathcal{T}_f})$, denoted by $(\mathcal{S},\mathcal{Z})$. Of course, $\mathcal{S}\le \td$. We distinguish three different situations:
\begin{itemize}
\item $\mathcal{Z}\ge L$ i.e. $\mathcal{S}=\hat{\tau}_L\wedge \mathcal{T}_f=\hat{\tau}_L=\tau_L=\tau_L\wedge \td$, which corresponds to the statement of Theorem \ref{thm:SJD}.
\item $\mathcal{Z}< L$ and $\mathcal{Z}+j(\mathcal{T}_f,\mathcal{Z},\xi_1)\ge L$, where the diffusion does not cross level $L$ until the first jump, while the first jump permits observation of this first passage. The first passage time corresponds to the first jump. Thus, $\tau_L=\mathcal{T}_f$, which occurs in the algorithm in the second step as both $\mathcal{S}\le \td$ and $\mathcal{Z}<L$.
\item $\mathcal{Z}<L$ and $\mathcal{Z}+j(\mathcal{T}_j,\mathcal{Z},\xi_1)<L$. In such a situation, the jump diffusion does not exceed $L$ on the interval $[\mathcal{T}_s,\mathcal{T}_f]=[0,E_1]$.  $\mathcal{Z}<L$ implies that $\hat{\tau}_L\wedge \mathcal{T}_f=\mathcal{T}_f$, the only possibility to overcome the level $L$ on the time interval $[\mathcal{T}_s,\mathcal{T}_f]$ being to observe a suitable jump at time $\mathcal{T}_f$. Unfortunately, such an event cannot occur because  $\mathcal{Z}+j(\mathcal{T}_j,\mathcal{Z},\xi_1)<L$. Thus, the first passage time is strictly larger than the first jump time. To generate the FPT, we propose to start again using the Markov property: we must observe jump diffusion starting at time $\mathcal{T}_f$, which becomes the starting time $\mathcal{T}_s$ with $\mathcal{Z}+j(\mathcal{T}_j,\mathcal{Z},\xi_1)<L$. In such a situation, the algorithm permits repeating Step 1 with new initial values.
\end{itemize}
\end{enumerate}
Because $\tau_L\wedge \td$ is finite a.s., only a finite number of repetitions of Step 1 is observed. The iterative procedure, which is directly associated with the Markov property of jump diffusion, permits us then to obtain the  statement of Theorem \ref{thm:SJD}.
\end{proof}

\begin{Com}
Theorem \ref{thm:SJD} concerns the generation of the finite stopping time $\tau_L\wedge \td$ associated with the reduced model \eqref{afterijump3}--\eqref{afterijump4}. We also recall that $\td$ is a fixed time. Using the Lamperti transformation, it is possible to generalize the study. Let us assume that, between two consecutive jumps, the stochastic process satisfies a stochastic differential equation:
\begin{equation}
dY_{t} = \mu(t,Y_{t})\,dt + \sigma(t,Y_{t})\,dB_t, \quad \mbox{for}\ T_i<t<T_{i+1},\quad i\in\mathbb{N},
\label{afterijumpg}
\end{equation} 
and the jumps modify the trajectories as follows:
\begin{equation}
Y_{T_i}=Y_{T_{i}-}+j(T_i,Y_{T_i-},\xi_i),\quad \forall i\in\mathbb{N}, 
\label{afterijumpg1}
\end{equation}
where $j$ stands for the jump function. A generation of the stopping time $\tau_L\wedge \td$ associated with the jump diffusion \eqref{afterijumpg}--\eqref{afterijumpg1} is then available. Proposition \ref{lamperti} emphasizes the efficient way to generate $\tau_L\wedge\td$.
Let us define
\begin{equation*}
\nu(t,x) = \int_{L}^{x} \frac{1}{\sigma(t,y)}\,dy
\label{lampertibis}
\end{equation*}
and let us consider its inverse $\nu^{-1}: \mathbb{R}_+\times\mathbb{R}\to\mathbb{R}$ which represents the unique function verifying $\nu^{-1}(t,\nu(t,x)) = x$ for any $(t,x) \in \mathbb{R}_+\times \mathbb{R}$. We define $Z_t=\nu(t,Y_t)$. As already mentionned in Section \ref{lamperti}, $(Z_t)_{t\ge 0}$ is a jump diffusion satifying $Z_0=\nu(0,y_0)$ and the reduced model 
\eqref{afterijump3}--\eqref{afterijump4} where the function $\alpha$ corresponds to 
\begin{equation}
\alpha(t,x):=\frac{\partial \nu}{\partial t}(t,\nu^{-1}(t,x)) + \frac{\mu(t,\nu^{-1}(t,x))}{\sigma(t,\nu^{-1}(t,
x))} - \frac{1}{2}\frac{\partial \sigma}{\partial x}(t,\nu^{-1}(t,x))
\label{eq:defdealpha}
\end{equation}
and the jump function $j(t,z,v)$ is replaced by 
\begin{equation}
\hat{\jmath}(t,z,v):=\nu(t,\nu^{-1}(t,z)+j(t,\nu^{-1}(t,z),v))-\nu(t,\nu^{-1}(t,z)).
\label{eq:newjump}
\end{equation}
 Let us finally note that $x=L$ represents the unique solution of the equation $\nu(t,x)=0$ since the diffusion coefficient is strictly positive. Therefore the following identity holds:
\begin{equation}
\tau_L:=\inf\{t\ge 0:\ Y_t\ge L\}=\inf\{t\ge 0:\ Z_t\ge 0\}.
\label{eq:iden:tau}
\end{equation}
\begin{proposition} Let $\td$ be a fixed time. The random variable $\mathcal{S}\sim (S\!J\!D)^{\nu(0,y_0),0}_{\td}$ which is the outcome of the algorithm -- using the drift term $\alpha$ defined in \eqref{eq:defdealpha} and the jump function described in \eqref{eq:newjump} -- has the same distribution as $\tau_L\wedge \td$ the stopping time associated with the jump diffusion \eqref{afterijumpg}--\eqref{afterijumpg1}.
\label{algo-gen-diff}
\end{proposition}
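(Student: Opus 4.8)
The plan is to reduce the statement to Theorem \ref{thm:SJD}, which already guarantees that Algorithm $(S\!J\!D)$ reproduces the law of the stopped first passage time for a \emph{reduced} jump diffusion (the one with unit diffusion coefficient), by verifying that Lamperti's change of variable turns the general jump diffusion \eqref{afterijumpg}--\eqref{afterijumpg1} into exactly such a reduced model, \emph{and} that it preserves the first passage time. First I would set $Z_t:=\nu(t,Y_t)$ and show that $(Z_t)_{t\ge0}$ is a jump diffusion of the reduced form \eqref{afterijump3}--\eqref{afterijump4} driven by the same Brownian motion, the same jump times $(T_i)$ and the same marks $(\xi_i)$.

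Between two consecutive jumps the process $(Y_t)$ solves the continuous SDE \eqref{afterijumpg}, so It\^o's formula applied to $\nu(t,Y_t)$ (the computation underlying Lamperti's transformation, using $\partial_x\nu=1/\sigma$ together with the induced expressions for $\partial_t\nu$ and $\partial_{xx}\nu$) shows that $Z$ satisfies $dZ_t=\alpha(t,Z_t)\,dt+dB_t$ with $\alpha$ given by \eqref{eq:defdealpha}. At a jump time $T_i$ only the spatial coordinate jumps while the time argument $t=T_i$ is frozen, so the increment is $Z_{T_i}-Z_{T_i-}=\nu\bigl(T_i,Y_{T_i-}+j(T_i,Y_{T_i-},\xi_i)\bigr)-\nu(T_i,Y_{T_i-})$, which, after substituting $Y_{T_i-}=\nu^{-1}(T_i,Z_{T_i-})$, is precisely $\hat{\jmath}(T_i,Z_{T_i-},\xi_i)$ as defined in \eqref{eq:newjump}. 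Since $Z_0=\nu(0,Y_0)=\nu(0,y_0)$, the process $Z$ is exactly the reduced jump diffusion with drift $\alpha$ and jump function $\hat{\jmath}$ started at $\nu(0,y_0)$.

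Next I would establish the pathwise identity of the first passage times. Because $\sigma>0$, the map $x\mapsto\nu(t,x)$ is strictly increasing for each fixed $t$, and the choice of $L$ as lower integration limit gives $\nu(t,L)=0$ for every $t$; hence $Y_t\ge L\iff Z_t\ge0$, which is exactly \eqref{eq:iden:tau}. Consequently the first passage time of $Y$ through $L$ coincides \emph{pathwise} with the first passage time of $Z$ through the level $0$, and therefore $\tau_L\wedge\td$ equals the stopped first passage time of the reduced diffusion $Z$ through $0$, not merely in distribution but trajectory by trajectory.

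Finally I would apply Theorem \ref{thm:SJD} to $Z$ with starting value $\nu(0,y_0)$ and level $0$: its conclusion is that the outcome $\mathcal{S}\sim(S\!J\!D)^{\nu(0,y_0),0}_{\td}$ has the same law as the stopped first passage time of $Z$ through $0$, hence the same law as $\tau_L\wedge\td$, which is the claim. The step I expect to require the most care is the transfer of hypotheses: Theorem \ref{thm:SJD} is available only under Assumptions \ref{assum20}, \ref{assum24} and \ref{assum25} for the \emph{transformed} drift $\alpha$ of \eqref{eq:defdealpha}, read with the level $0$ in place of $L$. These must be interpreted as standing conditions on the original coefficients $\mu,\sigma$ through the regularity of $\nu$ and the behaviour of $\partial_t\nu$, $\mu/\sigma$ and $\tfrac12\,\partial_x\sigma$, and I would verify that the functions $\beta,\gamma$ built from $\alpha$ inherit the required non-negativity and boundedness on $]-\infty,0]$ in the $Z$-coordinate; once this is granted, the rest is the bookkeeping of a change of variables supported entirely by results already proved.
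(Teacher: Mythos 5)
Your proposal is correct and takes essentially the same route as the paper: the paper likewise sets $Z_t=\nu(t,Y_t)$ with $\nu(t,x)=\int_L^x \sigma(t,y)^{-1}\,dy$ so that $\nu(t,L)=0$, identifies $Z$ as the reduced jump diffusion with drift \eqref{eq:defdealpha}, jump function \eqref{eq:newjump} and initial value $\nu(0,y_0)$, uses the strict monotonicity of $x\mapsto\nu(t,x)$ to get the pathwise identity \eqref{eq:iden:tau} of the first passage times, and then concludes by Theorem \ref{thm:SJD}. Your closing caveat---that the hypotheses of Theorem \ref{thm:SJD} must be read for the transformed drift $\alpha$ on $]-\infty,0]$---is precisely the implicit standing assumption under which the paper states the proposition.
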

\end{Com}
Theorem \ref{thm:SJD} concerns the generation of the finite stopping time $\tau_L\wedge \td$ associated with the reduced model \eqref{afterijump3}--\eqref{afterijump4}. We also  recall that $\td$ is a fixed time. Using the Lamperti transformation, it is possible to generalise the study (see \cite{these-Nicolas} for more details). 

\begin{Com} 
\section{Simulation of a.s. finite first passage times for jump diffusions}
\label{sec:infinite}
\subsection{Modification of the algorithm}
\label{sec:modif}
In the previous section, we focus our attention on the exact generation of stopped first passage times for jump diffusions, denoted by $\tau_L\wedge \td$ where $\td$ stands for a fixed time. The main advantage of considering stopped processes is to deal with bounded random variables. The algorithms presented so far therefore stop almost surely since they require only a finite number of iterations. 

The aim of this section is to present particular situations where the stopped diffusion can be replaced 
by the diffusion itself. They obviously correspond to almost surely finite first passage times: $\tau_L<\infty$. The algorithm $(S\!J\!D)_{\td}^{y,l}$ can then easily be modified just by setting $\td=\infty$: we obtain the following algorithm $(J\!D)^{y,l}$. 

\begin{framed}
\centerline{\sc Jump diffusion $\tau_L$  -- Algorithm $(J\!D)^{y,L}$ }

\emph{\begin{enumerate}
\item Let $(E_n)_{n\geq 1}$ be independent exponentially distributed r.v. with average $1/\lambda$.
\item Let $(\xi_n)_{n\geq 1}$ be independent r.v. with distribution function $\phi/\lambda$.
\end{enumerate}
The sequences $(E_n)_{n\geq 1}$ and $(\xi_n)_{n\geq 1}$ are assumed to be independent.}\\[2pt]
\noindent {\bf Initialization.} $n=0$, $\mathcal{T}_s=0$ (starting time), $\mathcal{T}_f=0$ (final time), $\mathcal{Y}=y$, $\mathcal{Z}=y$.\\[2pt]
{\bf While} $\mathcal{Y}<L$ and $\mathcal{Z}<L$
{\bf do}
\begin{itemize}
\item $n\leftarrow n+1$
\item $\mathcal{T}_s\leftarrow\mathcal{T}_f$
\item $\mathcal{T}_f\leftarrow\mathcal{T}_f+E_n$
\item Generate $(\mathcal{S},\mathcal{Z})\sim  (S\!D)_{\mathcal{T}_s,\mathcal{T}_f}^{\mathcal{Y},L}$
\item $\mathcal{Y}\leftarrow \mathcal{Z}+j(\mathcal{T}_f,\mathcal{Z},\xi_n)$
\end{itemize}
{\bf Outcome:}  The random variable $\mathcal{S}$.
\end{framed}
The following result is an immediate modification of the statement of Theorem \ref{thm:SJD}.
\begin{corollary} Let us consider $(Y_t)_{t\ge 0}$ the jump diffusion defined by \eqref{afterijump3}--\eqref{afterijump4} and $\tau_L$ the associated first passage time \eqref{def:fptj}.  Under Assumptions \ref{assum20}, \ref{assum24} and \ref{assum25} and assuming $\tau_L<\infty$, both the outcome $\mathcal{S}$ of Algorithm $(J\!D)^{y_0,L}$ and $\tau_L$ have the same distribution.
\label{cor:JD}
\end{corollary}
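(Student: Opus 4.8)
The plan is to realize Algorithm $(J\!D)^{y_0,L}$ as the $\td\to\infty$ limit of Algorithm $(S\!J\!D)_{\td}^{y_0,L}$ and to transfer the distributional identity of Theorem \ref{thm:SJD} through this limit. The starting point is structural: the two procedures are driven by the same elementary ingredients, and $(J\!D)$ is obtained from $(S\!J\!D)_{\td}$ simply by deleting the finite-horizon test $\mathcal{T}_f<\td$ from the loop condition and discarding Step 2. I would therefore couple them by feeding both the same sequences $(E_n)_{n\ge1}$, $(\xi_n)_{n\ge1}$ and the same internal randomness consumed at each iteration by the call to $(S\!D)_{\mathcal{T}_s,\mathcal{T}_f}^{\mathcal{Y},L}$. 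Because $\tau_L<\infty$ almost surely, only finitely many jumps occur before the level is reached, so $(J\!D)$ terminates a.s. at the first iteration $N$ at which either the generated continuous passage yields $\mathcal{Z}=L$ or a jump pushes $\mathcal{Y}=\mathcal{Z}+j(\mathcal{T}_f,\mathcal{Z},\xi_N)\ge L$; in both cases the returned $\mathcal{S}$ equals the realized value of $\tau_L$, by the very case analysis already performed in the proof of Theorem \ref{thm:SJD}.

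The core of the argument is the coupling claim: on the event $\{\tau_L\le\td\}$ the coupled outputs coincide, $\mathcal{S}_{\td}=\mathcal{S}_\infty=\tau_L$ almost surely, where $\mathcal{S}_{\td}$ and $\mathcal{S}_\infty$ denote the outcomes of $(S\!J\!D)_{\td}^{y_0,L}$ and $(J\!D)^{y_0,L}$. I would verify this by following both runs iteration by iteration. As long as the level has not been reached the start-time test $\mathcal{T}_f<\td$ of $(S\!J\!D)_{\td}$ is non-binding: on $\{\tau_L\le\td\}$ one has $T_{k-1}\le T_{N-1}\le\tau_L\le\td$ for every $k\le N$, and these inequalities are strict off a null set since the jump times are absolutely continuous. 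Hence $(S\!J\!D)_{\td}$ executes iterations $1,\dots,N$ on exactly the same intervals and with the same $(S\!D)$ calls as $(J\!D)$, reaching the same terminating iteration. Distinguishing the continuous-passage case ($\mathcal{Z}=L$, $\mathcal{S}=\tau_L\le\td$) from the jump case ($\mathcal{Z}<L$, $\mathcal{S}=\mathcal{T}_f=T_N=\tau_L$), I would read off that neither bullet of Step 2 alters $\mathcal{S}$, so both algorithms return $\tau_L$.

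With the coupling claim in hand, the conclusion is a routine limit. Fix $t\ge0$ and take $\td>t$. By Theorem \ref{thm:SJD}, $\mathcal{S}_{\td}\overset{d}{=}\tau_L\wedge\td$, so $\mathbb{P}(\mathcal{S}_{\td}\le t)=\mathbb{P}(\tau_L\le t)$. Using the coupling on $\{\tau_L\le\td\}$, the bound $\mathbb{P}(\mathcal{S}_{\td}\le t,\tau_L>\td)\le\mathbb{P}(\tau_L>\td)\to0$, and $\mathbb{P}(\tau_L\le\td)\to1$, I would obtain
\[
\mathbb{P}(\mathcal{S}_\infty\le t)=\lim_{\td\to\infty}\mathbb{P}(\mathcal{S}_\infty\le t,\ \tau_L\le\td)=\lim_{\td\to\infty}\mathbb{P}(\mathcal{S}_{\td}\le t,\ \tau_L\le\td)=\mathbb{P}(\tau_L\le t).
\]
Since $t\ge0$ is arbitrary, this yields $\mathcal{S}_\infty\overset{d}{=}\tau_L$, which is the assertion.

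I expect the main obstacle to be the coupling claim, and more precisely the bookkeeping around the finite-horizon loop test and Step 2 of $(S\!J\!D)_{\td}$: one must argue that the truncation at $\td$ never severs the terminating iteration when $\tau_L\le\td$. This rests entirely on the absolute continuity of the jump times, which lets one discard the boundary coincidences $T_{N-1}=\td$ and $T_N=\td$; once these null events are removed, the matching of the two runs is exact and the limit passage becomes elementary.
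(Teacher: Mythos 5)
Your strategy---couple $(J\!D)^{y_0,L}$ and $(S\!J\!D)_{\td}^{y_0,L}$ on the same input randomness and let $\td\to\infty$ using Theorem \ref{thm:SJD} as a black box---is viable, and it is in fact more explicit than the paper, which disposes of the corollary by re-running the iterative Markov argument of Theorem \ref{thm:SJD} with the horizon removed. But as written your argument has a circularity in the way the hypothesis $\tau_L<\infty$ a.s.\ is used. That hypothesis concerns the genuine jump diffusion $(Y_t)_{t\ge 0}$. The random variable that actually appears in your coupling computations is a different object: the passage time realized internally by the coupled runs, say $\tau^{\mathrm{alg}}:=\mathcal{S}_\infty$ (with $\tau^{\mathrm{alg}}=+\infty$ if the loop of $(J\!D)$ never exits). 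It is $\tau^{\mathrm{alg}}$, not $\tau_L$, that enters when you assert that ``$(J\!D)$ terminates a.s.\ at the first iteration $N$'', when you condition on $\{\tau_L\le\td\}$ in the coupling, and when you bound $\mathbb{P}(\mathcal{S}_{\td}\le t,\,\tau_L>\td)\le\mathbb{P}(\tau_L>\td)\to 0$ in the final display. The law of $\tau^{\mathrm{alg}}$ is precisely what the corollary is meant to determine, and Theorem \ref{thm:SJD}, invoked purely as a distributional identity for the output of $(S\!J\!D)_{\td}$, says nothing a priori about $\tau^{\mathrm{alg}}$ outside the event $\{\tau^{\mathrm{alg}}\le\td\}$ that you analyzed. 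So the a.s.\ termination of $(J\!D)$ and the convergence $\mathbb{P}(\tau^{\mathrm{alg}}>\td)\to0$ are assumed rather than proved, and the last equality of your display is not justified.

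The repair stays entirely inside your framework, which is why I would call this a gap rather than a wrong approach. Extend the coupling analysis to the complementary event $\{\tau^{\mathrm{alg}}>\td\}$ (including $\tau^{\mathrm{alg}}=+\infty$): there the loop of $(S\!J\!D)_{\td}$ stops either because the horizon is reached without a crossing, or because the crossing it finds occurs strictly after $\td$, and in both cases Step 2 returns exactly $\td$ (off the null set $\{T_n=\td \mbox{ for some } n\}$). Together with your analysis on $\{\tau^{\mathrm{alg}}\le\td\}$ this gives the pathwise identity $\mathcal{S}_{\td}=\tau^{\mathrm{alg}}\wedge\td$ a.s.\ for every $\td$, and Theorem \ref{thm:SJD} then yields that $\tau^{\mathrm{alg}}\wedge\td$ and $\tau_L\wedge\td$ have the same law for every $\td>0$. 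Since these truncated laws determine the law of a $[0,+\infty]$-valued variable, $\tau^{\mathrm{alg}}$ has the law of $\tau_L$; in particular $\tau^{\mathrm{alg}}<\infty$ a.s.\ --- this is the one and only place where the hypothesis $\tau_L<\infty$ a.s.\ enters --- so $(J\!D)$ terminates a.s.\ and its output has the law of $\tau_L$. Your limiting display then becomes superfluous. A minor additional remark: the strict inequality $T_{N-1}<\tau^{\mathrm{alg}}$ that makes the horizon test non-binding comes from the a.s.\ strict positivity of the continuous passage time generated by $(H\!Z)$ (starting from a point strictly below $L$), not from the absolute continuity of the jump times, which by itself would not exclude $\tau^{\mathrm{alg}}=T_{N-1}$.
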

\end{Com}
%
\mathversion{bold}
\subsubsection*{Models with $\tau_L<\infty$ a.s.}
\mathversion{normal}
If the considered first passage time is a.s. finite, we can  extend the previous algorithm by setting $\td = +\infty$. However, it is not so easy to determine if such a condition is satisfied generally. We now focus on assumptions related to the diffusion characteristics (e.g., diffusion coefficient, drift term, jump measure), which lead to the required event $\tau_L<\infty$. 

In fact, conditions are related to specific tools used for determining the transience or recurrence of stochastic processes. The proofs of this type of result are often based on Lyapunov functions. Wee proposed an interesting study of the recurrence or transience of $d$-dimensional jump diffusion \cite{Wee}. We now present the arguments in the particular one-dimensional case to point out conditions for the first passage time to be a.s. finite.

\begin{Com}
First we shall consider a toy model: a Brownian motion with constant drift term combined with a constant rate  Poisson process. The first passage time of this model has been studied by Kou and Wang \cite{Kou-Wang} with applications to mathematical finance. 
\subsubsection{A toy model: Brownian motion with Poisson jumps (Kou-Wang)}
Kou and Wang consider a particular diffusion satisfying \eqref{afterijumpg}--\eqref{afterijumpg1}, the following double exponential jump process:
\begin{equation}
\label{eq:Kou-Wang}
Y_t=\sigma B_t+\mu t+\sum_{i=1}^{N_t}\xi_i,\quad t\ge 0,\quad Y_0=0, 
\end{equation}
where $(N_t,\,t\ge 0)$ corresponds the Poisson counting process of parameter $\lambda$ associated with the random time sequence $(T_i)_{i\ge 1}$ and $(\xi_i)_{i\ge 1}$ are i.i.d random variables, independent of $(N_t,\, t\ge 0)$. The probability distribution function of $\xi_i$, denoted by $f_\xi$, is the double exponential one (with parameters $\eta_1$ and $\eta_2$):
\begin{equation}
\label{eq:dens}
f_\xi(x)=p \eta_1e^{-\eta_1 x} 1_{\{x\ge 0\}}+q \eta_2 e^{\eta_2 x}1_{\{x< 0\}},\quad\mbox{with}\quad p+q=1.
\end{equation}
In this particular situation, it is possible to compute quite easily the infinitesimal generator of the diffusion: for any twice continuously differentiable function $u$, we have
\begin{equation}
\label{eq:gene}
\mathcal{L}u(y)=\frac{1}{2} \sigma^2 u''(y)+\mu u'(y)+\lambda\int_\mathbb{R} \{u(y+x)-u(y)\} f_\xi(x)\,dx,\quad \forall y\in\mathbb{R}.
\end{equation}
Some information concerning the first passage time can be obtained using this generator. Kou and Wang emphasized for instance the joint distribution of the first passage time $\tau_L$ (for $L>0$) and the position $Y_{\tau_L}$ (useful for the computation of the overshoot $Y_{\tau_L}-L$). The results are deeply based on the fact that the expression of the generator is simple enough for pointing out explicit solutions of different boundary value problems.

First let us consider the Laplace transform of $\tau_L$ that is $\mathbb{E}[e^{-\rho\tau_L}]$ for any $\rho>0$ (see Theorem 3.1 in \cite{Kou-Wang}). There exist two positive roots $\beta_{1,\rho}$ and $\beta_{2,\rho}$ satisfying 
\[
\rho=G(\beta)\quad \mbox{with}\quad G(y):=y\mu+\frac{1}{2}\,y^2\sigma^2+\lambda\Big( \frac{p\eta_1}{\eta_1-y}+\frac{q\eta_2}{\eta_2+y}-1 \Big).
\]
We define two constants: 
\[
A(\rho)=\frac{\eta_1-\beta_{1,\rho}}{\eta_1}\frac{\beta_{2,\rho}}{\beta_{2,\rho}-\beta_{1,\rho}}\quad \mbox{and}\quad B(\rho)=\frac{\beta_{2,\rho}-\eta_1}{\eta_1}\frac{\beta_{1,\rho}}{\beta_{2,\rho}-\beta_{1,\rho}}
\]
and introduce the following continuous function $u_\rho$:
\begin{equation}
\label{eq:defsolLap}
u_\rho(y)=1_{\{y\ge L\}}+\Big( A(\rho)\,e^{-\beta_{1,\rho}(L-y)}+ B(\rho)\,e^{-\beta_{2,\rho}(L-y)}\Big)1_{\{ y<L \}}.
\end{equation}
It is straightforward that the particular choice of the constants leads $u_\rho$ to satisfy the equation 
\[
-\rho u_\rho(y)+\mathcal{L}u_\rho(y)=0\quad\mbox{for all}\ y<L.
\]
Using some regularization technique (replacing the continuous function $u_\rho$ by a sequence of $\mathcal{C}^2$-continuous functions $u_\rho^{(n)}$) and the martingale theory, Kou and Wang proved that
\[
u_\rho(0)=\mathbb{E}[e^{-\rho \tau_L}u_\rho(Y_{\tau_L})1_{\{\tau_L<\infty\}}]=\mathbb{E}[e^{-\rho \tau_L}],
\]
since $u_\rho(y)=1$ for any $y\ge L$. In order to describe the probability that $\tau_L$ is finite, it suffices to consider $\lim_{\rho\to 0}u_\rho(0)$. Kou and Wang studied carefully the behaviour of all parameters depending on $\rho$ as $\rho\to 0$ and deduced the following result.
\begin{proposition}[Kou \& Wang, 2003]
The first passage time $\tau_L$ of the diffusion \eqref{eq:Kou-Wang} satisfies
\[
\mathbb{P}(\tau_L<\infty)=1\quad\mbox{iff}\quad \overline{u}:=\mu+\lambda\Big( \frac{p}{\eta_1}-\frac{q}{\eta_2} \Big)\ge 0,
\]
where $\overline{u}$ stands for the overall drift of the jump diffusion process.
\end{proposition}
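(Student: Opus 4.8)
The plan is to let $\rho\downarrow 0$ in the Laplace-transform identity $u_\rho(0)=\mathbb{E}[e^{-\rho\tau_L}]$ established just above. Writing $\mathbb{E}[e^{-\rho\tau_L}]=\mathbb{E}[e^{-\rho\tau_L}1_{\{\tau_L<\infty\}}]$ and noting that $e^{-\rho\tau_L}1_{\{\tau_L<\infty\}}$ increases to $1_{\{\tau_L<\infty\}}$ as $\rho$ decreases to $0$, the monotone convergence theorem gives $\mathbb{P}(\tau_L<\infty)=\lim_{\rho\downarrow 0}u_\rho(0)$. The whole statement thus reduces to evaluating this limit, and since $u_\rho(0)=A(\rho)e^{-\beta_{1,\rho}L}+B(\rho)e^{-\beta_{2,\rho}L}$, everything hinges on the behaviour of the two positive roots $\beta_{1,\rho}<\beta_{2,\rho}$ of $G(\beta)=\rho$ as $\rho\downarrow 0$.

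The key structural observation is that $G$ coincides with the Laplace exponent of the Lévy process \eqref{eq:Kou-Wang}, namely $G(\beta)=\log\mathbb{E}[e^{\beta Y_1}]$; in particular $G(0)=0$ and $G'(0)=\mu+\lambda(p/\eta_1-q/\eta_2)=\overline{u}$. On the interval $(0,\eta_1)$ the function $G$ is strictly convex (its second derivative is a sum of manifestly positive terms) with $G(0)=0$ and $G(\beta)\to+\infty$ as $\beta\uparrow\eta_1$, while on $(\eta_1,\infty)$ it runs from $-\infty$ to $+\infty$. Consequently $\beta_{1,\rho}$ is the unique root in $(0,\eta_1)$ and $\beta_{2,\rho}$ the one in $(\eta_1,\infty)$, and the latter always converges, as $\rho\downarrow 0$, to the positive root $\beta_2^\ast\in(\eta_1,\infty)$ of $G=0$. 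The decisive dichotomy concerns $\beta_{1,\rho}$, and is governed entirely by the sign of $G'(0)=\overline{u}$. If $\overline{u}\ge 0$, strict convexity makes $G$ increasing on $(0,\eta_1)$, so $G>0$ there and the root $\beta_{1,\rho}$ is squeezed down to $0$ as $\rho\downarrow 0$; if $\overline{u}<0$, then $G$ dips below zero and possesses a second zero $\beta_1^\ast\in(0,\eta_1)$, to which $\beta_{1,\rho}$ converges.

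It then remains to feed these limits into the formula for $u_\rho(0)$, using the identity $A(\rho)+B(\rho)=1$, which expresses the continuity of $u_\rho$ at $y=L$ and follows from a one-line computation out of the definitions of $A$ and $B$. When $\overline{u}\ge 0$ one has $\beta_{1,\rho}\to 0$, whence $A(\rho)\to 1$, $B(\rho)\to 0$ and $e^{-\beta_{1,\rho}L}\to 1$, giving $\lim_{\rho\downarrow 0}u_\rho(0)=1$, i.e. $\mathbb{P}(\tau_L<\infty)=1$. When $\overline{u}<0$ the limits $\beta_1^\ast,\beta_2^\ast$ are both strictly positive, so $A(0),B(0)$ are positive weights summing to $1$ while $e^{-\beta_1^\ast L}$ and $e^{-\beta_2^\ast L}$ are both strictly less than $1$; hence $\lim_{\rho\downarrow 0}u_\rho(0)=A(0)e^{-\beta_1^\ast L}+B(0)e^{-\beta_2^\ast L}<A(0)+B(0)=1$, so that $\mathbb{P}(\tau_L<\infty)<1$. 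Combining the two cases yields the announced equivalence.

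I expect the main obstacle to be the rigorous root analysis of the second paragraph: pinning down that for every small $\rho>0$ there are exactly two positive roots, located one in each of $(0,\eta_1)$ and $(\eta_1,\infty)$, and controlling their limits. This is where the convexity of $G$ on $(0,\eta_1)$ together with the value $G'(0)=\overline{u}$ does the real work, and it is the only place where the sign of $\overline{u}$ enters. The strict inequality $u_0(0)<1$ in the transient case likewise relies on the positivity of both limiting roots, which is precisely what $\overline{u}<0$ guarantees.
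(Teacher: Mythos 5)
Your proof is correct and follows essentially the same route as the paper, which sketches precisely this argument (passing to the limit $\rho\downarrow 0$ in the identity $u_\rho(0)=\mathbb{E}[e^{-\rho\tau_L}]$) and delegates the root asymptotics to Kou and Wang's original article; you simply carry out those asymptotics yourself. The one imprecision is that convexity of $G$ only governs $(0,\eta_1)$ — the second derivative changes sign on $(\eta_1,\infty)$ — so the uniqueness of $\beta_{2,\rho}$ and its convergence to $\beta_2^\ast$ need a separate (routine) argument, e.g.\ noting that $G(\beta)=\rho$ is equivalent to a quartic equation with at most four real roots, which is exactly how Kou and Wang locate one root in each of the four relevant intervals.
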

Let us just note that the condition $\overline{u}$ seems quite intuitive and can be related to the asymptotic behaviour of the diffusion paths. Indeed let us consider the sequence $Z_n:=Y_{T_{n}}-Y_{T_{n-1}}$ for $n\ge 2$ and $Z_1=Y_{T_1}$. We observe that $(Z_n)_{n\ge 1}$ is a sequence of i.i.d random variables with a finite second moment. Therefore the law of large numbers implies
\[
\frac{1}{n}\,Y_{T_n}=\frac{1}{n}\sum_{i=1}^n Z_i\to \mathbb{E}[Z_1]\quad \mbox{a.s. \ when}\ n\to \infty.
\]
Since \(\mathbb{E}[Z_1]=\overline{u}/\lambda \), the condition $\overline{u}>0$ obviously leads to the almost sure event $\tau_L<\infty$. For the particular case $\overline{u}=0$, an argument based on the functional central limit theorem (Donsker's theorem) permits also to reach the same conclusion. These arguments are linked to the following facts:
\begin{itemize}
\item the process has independent increments 
\item the increments $Y_t-Y_s$ and $Y_{t-s}-Y_0$ are identically distributed.
\end{itemize}
This restrictive model permits to handle with a first example of jump diffusion with alsmost surely finite first passage time $\tau_L$.
\end{Com}
We first recall the definition of jump diffusion \eqref{jdeq} with time-homogeneous coefficients:
\begin{equation}
dX_t =  \mu(X_{t-}) \,dt + \sigma(X_{t-}) \,dB_t + \int_{\mathcal{E}}j(X_{t-},v)p_{\phi}(dv \times dt),\quad t\ge 0,\quad X_0=y_0,
\label{jdeqhom}
\end{equation}
where $p_{\phi}(dv \times dt)$ is a Poisson measure of intensity $\phi(dv)dt$. We introduce the compensated Poisson measure associated with $p_\phi$ and defined by:
\[
\hat{p}_\phi(dv\times dt):=p_\phi(dv\times dt)-\phi(dv)dt.
\]
Equation \eqref{jdeqhom} can easily be rewritten using $\hat{p}_\phi$ instead of $p_\phi$ just by replacing the drift term of the diffusion process $\mu(\cdot)$ by:
\[
\hat{\mu}(x)=\mu(x)+\int_{\mathcal{E}}j(x,v)\phi(dv),\quad \forall x\in\mathbb{R}.
\]
$(X_t,\, t\ge 0)$ has the same conditional distribution as $(Y_t,\ t\ge 0)$ defined by \eqref{afterijump}--\eqref{afterijump1} with time-homogeneous coefficients. Therefore, we focus on a first passage time problem for diffusion $(X_t,\, t\ge 0)$.
In the particular case $\mathcal{E}=\mathbb{R}$, we obtain the jump diffusion model introduced by Wee:
\begin{equation}
\label{eq:depWee}
X_t=y_0+\int_0^t\hat{\mu}(X_{s-})\,ds+\int_0^t\sigma(X_{s-})dB_s+\int_0^t\int_{\mathbb{R}}j(X_{s-},v)\hat{p}_\phi(dv\times ds),\quad t\ge 0.
\end{equation}
We denote the probability distribution of such a solution by $\mathbb{P}_{y_0}$. To state the result concerning the first passage time $\mathbb{P}_{y_0}(\tau_L<\infty)=1$ for $y_0<L$, we must introduce several assumptions; the symmetric case can be handled using similar arguments. We assume in particular that the coefficients are regular and the diffusion is nondegenerate.
\begin{assu}
\label{assu:Wee1} We assume that there exists a constant $K>0$ such that:
\begin{equation}
\vert \hat{\mu}(x) - \hat{\mu}(y) \vert^2 + \vert \sigma(x) - \sigma(y) \vert^2 + \int_\mathbb{R} \vert j(x,v) - j(y,v) \vert^2 \phi(dv) \leq K \vert x - y \vert^2
\label{exist1}
\end{equation}
and: 
\begin{equation}
\vert \hat{\mu}(x)  \vert^2 + \vert \sigma(x)  \vert^2 + \int_\mathbb{R} \vert j(x,v)  \vert^2 \phi(dv) \leq K(1+ \vert x \vert^2),
\label{exist2}
\end{equation}
there exists $\sigma_0>0$ such that $\sigma(x)\ge \sigma_0$ for all $x\in\mathbb{R}$.
\end{assu}
Under Assumption \ref{assu:Wee1}, \eqref{eq:depWee} yields a unique strong solution that is right-continuous with left-hand limits (e.g. see Theorem 9.1 in \cite{ikeda-watanabe} for homogeneous coefficients and Theorem 1.19 in \cite{oksendal2005applied} in the general nonhomogeneous case). We now introduce two additional assumptions that are critical to obtain nearly certain finite times $\tau_L$ when the initial value $y_0$ satisfies $y_0<L$.
\begin{Com}
In order to state the main result of this section, we need to first prove that the jump diffusion exits from any bounded interval almost surely.
\begin{lemma}
\label{taufini}
For any positive $R$, let us define the following stopping time associated with the diffusion \eqref{eq:depWee}:
\begin{equation}
\label{def:zeta}
\zeta_{R} = \inf\{t \geq 0:\ X_t \notin ]-R,R[ \}.
\end{equation}
Under Assumption \ref{assu:Wee1}, for any $R>0$ there exists $\rho_R>0$ such that
\begin{equation}
\sup\limits_{y_0 \in [-R,R]} \mathbb{E}_{y_0}[\exp(\rho_R\, \zeta_{R})] < \infty.
\end{equation}
\end{lemma}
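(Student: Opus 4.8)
The plan is to combine a Lyapunov estimate for the mean exit time with a geometric iteration based on the time-homogeneous Markov property of \eqref{eq:depWee}. Throughout I write $\mathcal{L}$ for the generator of $X$, acting on $u\in\mathcal{C}^2(\mathbb{R})$ with bounded derivatives by $\mathcal{L}u(x)=\tfrac12\sigma^2(x)u''(x)+\hat\mu(x)u'(x)+\int_{\mathbb{R}}\bigl[u(x+j(x,v))-u(x)-j(x,v)u'(x)\bigr]\phi(dv)$, and I set $M_R:=\sup_{|x|\le R}|\hat\mu(x)|$, which is finite thanks to the growth bound \eqref{exist2}. The first task is to produce a uniform bound $\sup_{|y_0|\le R}\mathbb{E}_{y_0}[\zeta_R]\le C_R<\infty$, and for this I look for a bounded, globally concave test function $u$ whose continuous part is strictly negative on $[-R,R]$. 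On the interval the natural candidate is $u(x)=-e^{\gamma x}$: its continuous part equals $-\gamma e^{\gamma x}\bigl(\tfrac12\gamma\sigma^2(x)+\hat\mu(x)\bigr)$, and since $\sigma(x)\ge\sigma_0$ by Assumption~\ref{assu:Wee1} the bracket is bounded below by $\tfrac12\gamma\sigma_0^2-M_R$, which exceeds $1$ once $\gamma$ is large. Hence the continuous part is $\le-\epsilon:=-\gamma e^{-\gamma R}$ on $[-R,R]$.

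The decisive point is concavity: for a concave $u$ one has $u(x+j)-u(x)-ju'(x)\le 0$ for every jump, so the whole nonlocal term is nonpositive and cannot spoil the sign. I would then extend $u$ off $[-R,R]$ to a globally $\mathcal{C}^2$, concave function with bounded first and second derivatives (flattening the exponential to affine tails); concavity is preserved, while the elementary bound $|u(x+j)-u(x)-ju'(x)|\le\tfrac12\|u''\|_\infty\,j^2$ together with $\int j(x,v)^2\phi(dv)\le K(1+R^2)$ from \eqref{exist2} shows that the nonlocal term is finite. Thus $\mathcal{L}u\le-\epsilon$ on $]-R,R[$, with $u$ and its derivatives bounded.

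Applying Dynkin's formula to $u$ up to $t\wedge\zeta_R$ — licit because the bounded derivatives and the linear growth \eqref{exist2} make the compensated integrals true martingales — gives $\mathbb{E}_{y_0}[u(X_{t\wedge\zeta_R})]\le u(y_0)-\epsilon\,\mathbb{E}_{y_0}[t\wedge\zeta_R]$, whence $\epsilon\,\mathbb{E}_{y_0}[t\wedge\zeta_R]\le 2\|u\|_\infty$. Letting $t\to\infty$ by monotone convergence yields $\mathbb{E}_{y_0}[\zeta_R]\le C_R:=2\|u\|_\infty/\epsilon$, uniformly in $y_0\in[-R,R]$. By Markov's inequality, with $t_0:=2C_R$ one obtains $\sup_{|y_0|\le R}\mathbb{P}_{y_0}(\zeta_R>t_0)\le\tfrac12=:q$. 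I then iterate using the Markov property at the deterministic times $nt_0$: on $\{\zeta_R>nt_0\}$ the point $X_{nt_0}$ lies in $]-R,R[$, so $\mathbb{P}_{y_0}(\zeta_R>(n+1)t_0)\le q\,\mathbb{P}_{y_0}(\zeta_R>nt_0)$, hence $\mathbb{P}_{y_0}(\zeta_R>nt_0)\le q^{n}$ for all $y_0\in[-R,R]$. Writing $\mathbb{E}_{y_0}[e^{\rho\zeta_R}]=1+\rho\int_0^\infty e^{\rho t}\mathbb{P}_{y_0}(\zeta_R>t)\,dt$ and splitting the integral over the blocks $[nt_0,(n+1)t_0)$ bounds it by $t_0\,e^{\rho t_0}\sum_{n\ge0}(q\,e^{\rho t_0})^{n}$, which is finite and uniform in $y_0$ as soon as $q\,e^{\rho t_0}<1$, that is for any $\rho_R<t_0^{-1}\ln(1/q)$. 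This yields the stated bound.

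The main obstacle is the construction of $u$. One must beat the a priori large drift on $[-R,R]$, which forces a steep exponential profile, while simultaneously keeping the nonlocal term harmless for arbitrarily large jumps. A crude second-order bound on the jump term scales like $\|u''\|_\infty\sim e^{\gamma R}$ and would overwhelm the favourable drift gain $\sim e^{-\gamma R}$; it is precisely the global concavity of $u$ that resolves this tension by giving the nonlocal term the correct sign for free, so that only the nondegeneracy $\sigma\ge\sigma_0$ and the square-integrability of the jumps in \eqref{exist2} are ultimately needed.
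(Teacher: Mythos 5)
Your overall architecture — a uniform bound $\sup_{|y_0|\le R}\mathbb{E}_{y_0}[\zeta_R]<\infty$, then Markov's inequality, then iteration of the Markov property at the deterministic times $nt_0$ to get $\mathbb{P}_{y_0}(\zeta_R>nt_0)\le 2^{-n}$, then integration of the tail — is a legitimate and genuinely different organization from the paper's one-stroke argument, and your steps two and three are correct. The gap is at the foundation, in step one: the Lyapunov function you require does not exist. A concave function on $\mathbb{R}$ that is bounded is necessarily constant (if $u'(x_0)\neq 0$ at some point, the tangent line at $x_0$ dominates $u$ and forces $u\to-\infty$ in one direction). Your $u$ coincides with $-e^{\gamma x}$ on $[-R,R]$, so $u'(R)=-\gamma e^{\gamma R}<0$, and any globally concave extension satisfies $u(x)\le u(R)+u'(R)(x-R)\to-\infty$ as $x\to+\infty$; the ``affine tails'' you propose indeed have linear growth, hence $\|u\|_\infty=\infty$, and the decisive inequality $\epsilon\,\mathbb{E}_{y_0}[t\wedge\zeta_R]\le 2\|u\|_\infty$ is vacuous. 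You cannot simultaneously have the global concavity that makes the nonlocal term nonpositive and the boundedness that lets Dynkin's formula conclude; your closing paragraph asserts exactly this impossible combination.

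The repair is the real content of the lemma, and it is what the paper's proof (following Wee) supplies. One option is to keep your concave $u$ with affine tails: Dynkin's formula still holds (your martingale justification goes through), but you must then bound $\mathbb{E}_{y_0}[u(X_{t\wedge\zeta_R})]$ from below, i.e.\ control the expected overshoot $\mathbb{E}_{y_0}[(|X_{t\wedge\zeta_R}|-R)^+]$ multiplied by the very large slope $\gamma e^{\gamma R}$. This can be done via the compensator of the jump measure, $\mathbb{E}_{y_0}\big[\sum_{s\le t\wedge\zeta_R}|\Delta X_s|^2\big]\le K(1+R^2)\,\mathbb{E}_{y_0}[t\wedge\zeta_R]$ by \eqref{exist2}, which leads to a self-bounding inequality $\epsilon\,m\le C_1+C_2\sqrt{m}$ for $m=\mathbb{E}_{y_0}[t\wedge\zeta_R]$ — but none of this appears in your text. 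The other option is to truncate $u$ so that it is bounded; then concavity is destroyed precisely where large jumps land, and the nonlocal term must be estimated for those jumps. That is what the paper does: it takes the compactly supported polynomial $\widehat\psi(z)=a^{2n}-(z-2R)^{2n}$ (bounded, concave on $\{|z-2R|\le a\}$ but not globally), and the jumps landing outside the support, i.e.\ on $I_a=\{v:\ |y+j(y,v)-2R|>a\}$, are handled through the bound $\int_{I_a}|j(y,v)|\,\phi(dv)\le K(1+R^2)/(a-3R)$ deduced from \eqref{exist2} via Cauchy--Schwarz. In either route, the estimate for large jumps that you declare ``free'' is the heart of the matter. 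Note finally that the paper gets the exponential moment directly by applying It\^o's formula to $e^{\rho t}\widehat\psi(X_t)$ with $\rho$ small ($\rho K<\alpha$), so your three-step reduction, once step one is repaired, would be an alternative presentation rather than a shortcut.
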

\end{Com}
\begin{Com}
\noindent We just recall the proof proposed by Wee.
\begin{proof}
Let us consider $R>0$, choose $a > 3R$ and set $K = a^{2n}$ where $n$ is a positive integer that shall be determined later on. 
We consider $\psi \in C_c^2(\mathbb{R})$ such that 
\begin{equation*}
\psi(z)= \left\{
    \begin{array}{ll}
         K-z^{2n} & \mbox{ for } \vert z \vert \leq a,\\
         0 & \mbox{ for } \vert z \vert \geq a+1.
\end{array}
\right.
\end{equation*}
Let us set $\widehat{\psi}(z) = \psi(z - 2R)$ and $\Psi(t,z) = e^{\rho t } \widehat{\psi}(z)$ for some $\rho >0$ which shall once again be selected later. Using Itô's formula to the diffusion process \eqref{eq:depWee}, we have
\begin{equation}
\mathbb{E}_{y_0}[\Psi(t \wedge \zeta_{R}, X_{t \wedge \zeta_{R}})] = \widehat{\psi}(x) + \mathbb{E}_{y_0}\left[\int_0^{t \wedge \zeta_{R}}\Big(\rho e^{\rho s} \widehat{\psi}(X_s) + e^{\rho s} \mathcal{L}\widehat{\psi}(X_s)\Big)\, ds \right],
\label{ito}
\end{equation}
where
\begin{equation}
\label{def:gene}
\mathcal{L}f(y) = \frac{\sigma^2(y)}{2}f''(y) + \hat{\mu}(y) f'(y) + \int_\mathbb{R} \Big(f(y + j(y,v)) - f(y) -f'(y)j(y,v)\Big)\, \phi(dv).
\end{equation}
Let us define $I_a=\{v\in\mathbb{R}:\ \vert y + j(y,v) - 2R\vert >a \}$. Therefore, for any $\vert y \vert \leq R$,
\begin{align*}
\mathcal{L}\widehat{\psi}(y) &\leq -2n(y-2R)^{2n-2}\Big[(y-2R)\hat{\mu}(y) + (2n-1)\frac{\sigma^2(y)}{2}
&-(y-2R)\int_{I_a} j(y,v) \phi(dv)\Big].
\end{align*}
The triangle inequality leads to $I_a\subset J_a:=\{v\in\mathbb{R}:\ \vert  j(y,v) \vert >a-3R \}$.
Let us then remark that for $\vert y \vert \leq R$,
\begin{align*}
\int_{I_a} |j(y,v)| \phi(dv) &\leq \int_{J_a} |j(y,v)| \phi(dv)
\leq (a-3R)\int_{J_a} \frac{|j(y,v)|}{a-3R} \phi(dv)\\
&\leq (a-3R)\int_{J_a} \left(\frac{|j(y,v)|}{a-3R}\right)^2 \phi(dv)\leq \frac{K(1+ y^2)}{a-3R}\leq \frac{K(1+R^2)}{a-3R}.
\end{align*}
Hence, for $n$ large enough, there exists $\alpha > 0$ such that 
\(\mathcal{L}\widehat{\phi}(y) \leq - \alpha \mbox{, for } \vert y \vert \leq R.\)
Then for $\rho > 0$ small enough, we get $\beta := \alpha -\rho K>0$. Hence equation \eqref{ito} becomes
\begin{align*}
\mathbb{E}_{y_0}[\Psi(t \wedge \zeta_{R}, X_{t \wedge \zeta_{R}})] &\leq K +\mathbb{E}_{y_0}\Big[ \int_0^{t \wedge \zeta_{R}} (\rho K - \alpha)e^{\rho s}\,ds\Big]= K - \beta \, \mathbb{E}_{y_0}\Big[\int_0^{t \wedge \zeta_{R}} e^{\rho s}\,ds\Big].
\end{align*}
Finally, since $\mathbb{E}_{y_0}[\Psi(t \wedge \zeta_{R}, X_{t \wedge \zeta_{R}})] \geq 0$, we obtain
\begin{equation*}
\mathbb{E}_{y_0}[e^{\rho\, t \wedge \zeta_{R}}] \leq \frac{\rho}{\beta} K + 1,
\end{equation*}
which leads to the announced result as $t$ tends to infinity.
\end{proof}
\end{Com}
\begin{assu}
\label{assu:Wee2bis} There exists $r>0$ such that:
\begin{itemize}
\item the following bound holds:
\begin{equation}
\sup\limits_{y \le -r}\int_\mathbb{R} \left( \ln\left(\frac{\vert y + j(y+L+r,v) \vert}{\vert y \vert}\right)\right)^2\phi(dv)=:\kappa_{L,r}< \infty.\label{eq:assu:Wee21}
\end{equation}
\item there exist  $\epsilon>0$ and $\eta>0$ satisfying:
\begin{align}
y\hat{\mu}(y+L+r) + \int_\mathbb{R}\left(y^2\ln\left(\frac{\vert y + j(y+L+r,v)\vert}{\vert y \vert}\right) - yj(y+L+r,v)\right) \phi(dv) \nonumber\\
< \frac{(1-\epsilon)}{2}\,\sigma^2(y+L+r) - \eta y^2,
\label{condition}
\end{align}
for all $y\le - r$.
\end{itemize}
\end{assu}
%
The first part of Assumption \ref{assu:Wee2bis} requires that the jumps are not too large. The second part highlights  a type of competition between the drift, the diffusion coefficient and the jump measure. A careful reading of the proof of Theorem 1 in \cite{Wee} permits us to adapt its statement to the situation just introduced above.
\begin{thm} 
Let Assumption \ref{assu:Wee1} and Assumption \ref{assu:Wee2bis} be satisfied; then, $\mathbb{P}_{y_0}(\tau_{L}< \infty) = 1$ for any $y_0\le L$, where $\tau_L$ is the first passage time through level $L$ for diffusion \eqref{eq:depWee}.
\label{thm:Wee}
\end{thm}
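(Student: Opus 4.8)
The plan is to adapt Wee's Lyapunov-function argument, using the logarithmic test function whose shape is already dictated by Assumption \ref{assu:Wee2bis}. Concretely, I would set
\[
V(x):=\ln\bigl|L+r-x\bigr|,\qquad x\neq L+r,
\]
which on the region of interest $\{x<L+r\}$ reads $V(x)=\ln(L+r-x)$: it is decreasing, tends to $+\infty$ as $x\to-\infty$, and is bounded below by $\ln r$ on $\{x\le L\}$, so that $V-\ln r\ge 0$ there. This choice makes $V$ push the process towards the level $L$, and a strictly negative generator will encode the fact that $(X_t)$ cannot drift off to $-\infty$ before reaching $L$.

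The decisive first step is to compute the action of the extended generator
\[
\mathcal{L}f(x)=\tfrac12\sigma^2(x)f''(x)+\hat\mu(x)f'(x)+\int_{\mathbb{R}}\bigl(f(x+j(x,v))-f(x)-f'(x)\,j(x,v)\bigr)\phi(dv)
\]
on $V$ and to match it with \eqref{condition}. Writing $x=y+L+r$, so that $\{x\le L\}$ is exactly $\{y\le -r\}$, one gets $V'(x)=1/y$, $V''(x)=-1/y^2$ and $V(x+j)-V(x)=\ln(|y+j(x,v)|/|y|)$ — the absolute values in \eqref{eq:assu:Wee21} and \eqref{condition} being precisely what extends $V$ consistently to jump destinations lying above $L+r$. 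Multiplying by $y^2>0$ one obtains
\[
y^2\,\mathcal{L}V(x)=-\tfrac12\sigma^2(x)+y\,\hat\mu(x)+\int_{\mathbb{R}}\Bigl(y^2\ln\tfrac{|y+j(x,v)|}{|y|}-y\,j(x,v)\Bigr)\phi(dv),
\]
and Assumption \ref{assu:Wee2bis} bounds the last two terms by $\tfrac{1-\epsilon}{2}\sigma^2(x)-\eta y^2$. Hence $y^2\mathcal{L}V(x)\le-\tfrac{\epsilon}{2}\sigma^2(x)-\eta y^2$, and since $\sigma\ge\sigma_0>0$ by Assumption \ref{assu:Wee1} this delivers the clean bound $\mathcal{L}V(x)\le-\eta<0$ for every $x\le L$.

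The second step converts the inequality $\mathcal{L}V\le-\eta$ into the almost sure finiteness of $\tau_L$. By Itô's/Dynkin's formula for jump diffusions, $V(X_t)=V(y_0)+\int_0^t\mathcal{L}V(X_s)\,ds+N_t$ up to $\tau_L$, where $N$ is the local martingale carrying the compensated Brownian and jump fluctuations. On the event $\{\tau_L=\infty\}$ the path stays strictly below $L$ for all time, so $V(X_t)\ge\ln r$ while $\int_0^t\mathcal{L}V(X_s)\,ds\le-\eta t$; hence $N_t\ge\eta t+\ln r-V(y_0)$ grows at least linearly. On the other hand, using $\sigma\ge\sigma_0$, the growth bound \eqref{exist2} and the uniform estimate $\kappa_{L,r}<\infty$, the predictable quadratic variation satisfies $\langle N\rangle_t\le C\,t$ along such paths. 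The strong law for local martingales then forces $N_t=o(t)$ on $\{\langle N\rangle_\infty=\infty\}$ and convergence of $N_t$ on $\{\langle N\rangle_\infty<\infty\}$, both contradicting the linear lower bound, whence $\mathbb{P}_{y_0}(\tau_L=\infty)=0$.

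The principal difficulty is the rigorous stochastic calculus for the singular, non-$\mathcal{C}^2$ test function $V(x)=\ln|L+r-x|$. Although the process never reaches the singularity $x=L+r$ before $\tau_L$, the compensator at a point $x<L$ integrates over all jump destinations, including those landing arbitrarily close to $L+r$, so one must check that $\mathcal{L}V(x)$ and the quadratic variation of $N$ are finite and grow only linearly. This is exactly the purpose of the first part of Assumption \ref{assu:Wee2bis}: the uniform bound $\kappa_{L,r}<\infty$ on $\int(\ln(|y+j|/|y|))^2\,\phi(dv)$ renders the logarithmic jump increments square-integrable, so that $N$ is a genuine local $L^2$-martingale and $\langle N\rangle$ is controlled. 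Combined with the Lipschitz and growth conditions \eqref{exist1}--\eqref{exist2} of Assumption \ref{assu:Wee1}, which guarantee well-posedness of \eqref{eq:depWee} and the finite second moments needed for the compensator term $V'(x)\,j$, this legitimizes Itô's formula (after the usual localization away from the singularity) and reduces the proof, as announced, to a careful transcription of the argument for Theorem 1 in \cite{Wee}.
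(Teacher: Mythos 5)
Your proposal is correct in substance, but it reaches the conclusion by a genuinely different mechanism than the paper. Both arguments are Lyapunov arguments organized around the shifted variable $y=x-(L+r)$, but the paper (following \cite{Wee}) uses the smoothed power function $f(x)=-\vert x-(L+r)\vert^{2\delta}$ with $\delta$ small, for which condition \eqref{condition} only yields the submartingale property $\mathcal{L}f\ge 0$ on $(-\infty,L]$; it must then run an optional-stopping comparison between the entrance time into the strip $]L,L+2r[$ and the exit time $\zeta_R$ from $]-R,R[$, let $R\to\infty$, and invoke an auxiliary lemma (proved by a second, polynomial Lyapunov computation) giving finite exponential moments of $\zeta_R$, before concluding by contradiction. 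You instead take what is in effect the $\delta\to 0$ limit of Wee's test function, $V=\ln\vert L+r-x\vert$, for which \eqref{condition} is exactly calibrated: your identity
\begin{equation*}
y^2\,\mathcal{L}V(x)=-\tfrac12\sigma^2(x)+y\,\hat\mu(x)+\int_{\mathbb{R}}\Bigl(y^2\ln\tfrac{\vert y+j(x,v)\vert}{\vert y\vert}-y\,j(x,v)\Bigr)\phi(dv)<-\tfrac{\epsilon}{2}\sigma^2(x)-\eta y^2
\end{equation*}
is right and yields the strictly negative drift $\mathcal{L}V<-\eta$ on $(-\infty,L]$, something the paper's function never delivers. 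Your quadratic-variation bound $\langle N\rangle_t\le Ct$ is also correct (the Brownian part because $\sigma^2(x)/(L+r-x)^2$ is bounded on $(-\infty,L]$ by \eqref{exist2} and $(L+r-x)^2\ge r^2$; the jump part by \eqref{eq:assu:Wee21}), and Liptser's strong law for locally square-integrable martingales then closes the argument directly on $\{\tau_L=\infty\}$: no strip, no $R\to\infty$ limit, no exponential-moment lemma. A side benefit of your route is that you only ever need generator bounds at positions $\le L$, whereas the paper's stopping-time step tacitly requires control of $\mathcal{L}f$ at points above $L+2r$, which the process can reach by jumping over the strip.

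The one place where your sketch hides real work is the stochastic calculus for the singular function $V$: the decomposition $V(X_t)=V(y_0)+\int_0^t\mathcal{L}V(X_s)\,ds+N_t$ is a priori only available on the stochastic interval $[0,\tau_L)$, while the strong law needs a genuine locally square-integrable martingale defined on all of $[0,\infty)$, and "localization away from the singularity" does not by itself produce one, because the compensator integrates $V$ at jump destinations arbitrarily close to $L+r$. The standard repair is a truncation rather than a localization: replace $V$ by a smoothed version of $V_n=\max(V,-n)$, which is globally $\mathcal{C}^2$, coincides with $V$ on $(-\infty,L]$, and is $1$-Lipschitz as a function of $V$ (so the bound $\langle N^{(n)}\rangle_t\le Ct$ survives); then $\mathcal{L}V_n(x)-\mathcal{L}V(x)=\int\bigl(V_n-V\bigr)(x+j(x,v))\,\phi(dv)$ is controlled uniformly on $(-\infty,L]$ by Cauchy--Schwarz together with \eqref{eq:assu:Wee21}, since the integrand is supported on $\{\vert y+j\vert<e^{-n}\}$, a set of $\phi$-measure $O(\kappa_{L,r}/n^2)$, and dominated there by $\vert\ln(\vert y+j\vert/\vert y\vert)\vert$; hence $\mathcal{L}V_n\le-\eta/2$ for $n$ large and your argument goes through with $V_n$ in place of $V$. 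This regularization is precisely the work that the paper's smoothed $\delta$-power function performs implicitly, so the gap is one of detail rather than of substance — but be aware that the final sentence of your sketch, deferring to a "transcription of the argument for Theorem 1 in \cite{Wee}", is not accurate: Wee's argument is the paper's route, not yours, and your route needs the truncation step above instead.
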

We have just completed this section by considering a particular situation that insures that $\tau_L<\infty$ is nearly certain. This result concerns a time-homogeneous jump diffusion process because the essential tools used in the proof of Theorem \ref{thm:Wee} are based on the infinitesimal generator and on an explicit Lyapunov function. However, the use of suitable comparison results 
permits us to manage nonhomogeneous jump diffusions and to bring up particular conditions that ensure that the first passage times are almost certainly finite. 
\begin{Com}
\begin{proof}Let us first just mention that the particular case $y_0=L$ is obvious. So let us assume for the sequel that $y_0<L$. Let us consider the constant $r>0$ appearing in Assumption \ref{assu:Wee2bis} and introduce a parameter $\delta$ such that $0< \delta < \epsilon(r)/2$, the value of $\delta$ shall be determined later. We also introduce the first entrance time
\begin{equation}
\label{eq:def:zeta2}
\zeta_{L,r} = \inf\{t \geq 0:\ X_t \in ]L,L+2r[ \}.
\end{equation}
Since the paths of the diffusion are not continuous, the first passage time $\tau_L=\inf\{t\ge 0:\ X_t\ge L\}$ does not always correspond to the first time the diffusion reaches the level $L$. That is why we need to give in some sense more thickness to the level $L$: we replace it by the strip $]L,L+2r[$.
Let us prove now that $\zeta_{L,r}$ is almost surely finite which implies $\tau_L<\infty$.

We also introduce a non positive and non increasing symmetric function $F \in C^2(\mathbb{R})$ such that $F(y)  = -\vert y \vert^{2\delta}$ for any $\vert y \vert > \alpha$ where $\alpha=re^{-1/\delta}$. Let us define $f$ by $f(y) = F(\vert y-(L+r) \vert)$ and $I_{y,\alpha}:=\{v\in\mathbb{R}: \vert y + j(y,v) -(L+r) \vert > \alpha\}$. Then, for $\vert y-(L+r) \vert>r$, we obtain 
\begin{align*}
\mathcal{L}f(y) &= 2\delta \vert y - (L+r) \vert^{2 \delta} \Big[\frac{1 - 2\delta}{2 \, \vert y-(L+r) \vert^2}\,\sigma^2(y) -\frac{y-(L+r)}{\vert y - (L+r) \vert^2 }\,\hat{\mu}(y)\\
&+ \int_{I_{y,\alpha}^c} \left( \frac{f(y + j(y,v))}{2\delta \vert y-(L+r) \vert^{2\delta}} + \frac{1}{2\delta} + \frac{y-(L+r)}{\vert y-(L+r) \vert^2}\,j(y,v)\right) \phi(dv)\\
&-\frac{1}{2\delta} \int_{I_{y,\alpha}} \left(\Big(\frac{\vert y + j(y,v) -(L+r) \vert }{\vert y-(L+r) \vert}\Big)^{2 \delta} - 1 -2\delta\, \frac{y-(L+r)}{\vert y-(L+r) \vert^2}\,j(y,v)\right) \phi(dv)\Big]
\end{align*}
Here $\mathcal{L}$ is the infinitesimal generator defined in \eqref{def:gene}.
Then, using the second condition \eqref{condition} in Assumption \ref{assu:Wee2bis}, we obtain
\begin{align*}
\mathcal{L}f(y) \geq 2\delta \vert y-(L+r)\vert^{2\delta} \Big[ \frac{(\epsilon - 2\delta)\sigma^2(y)}{2 \vert y-(L+r) \vert^2} + \eta +\int_{I_{y,\alpha}^c} \ln\left(\frac{\vert y + j(y,v) -(L+r) \vert }{\vert y-(L+r) \vert}\right) \phi(dv) \\
-\frac{1}{2\delta} \int_{I_{y,\alpha}} \left(\Big(\frac{\vert y + j(y,v) -(L+r) \vert }{\vert y-(L+r) \vert}\Big)^{2 \delta} - 1 -2\delta \ln\left(\frac{\vert y + j(y,v) -(L+r) \vert }{\vert y-(L+r) \vert}\right) \right) \phi(dv)\Big].
\end{align*}
Let $M>1$. The set $I_{y,\alpha}$ can be spitted into two parts $I_{y,\alpha}=J_1^M \cup J_2^M$ where 
\begin{eqnarray*}
\left\{\begin{array}{l} J_1^M=\{v\in\mathbb{R}:\ \vert y + j(y,v) -(L+r) \vert > M \vert y-(L+r) \vert\}\\[3pt] J_2^M=\{v\in\mathbb{R}:\ \alpha<\vert y + j(y,v) -(L+r) \vert \le M \vert y-(L+r) \vert \}. \end{array}\right.
\end{eqnarray*}
Combining the previous decomposition with the following bound
\begin{equation*}
(\ln u)^2\ge 1-\frac{1+\ln u}{u},\quad \forall u>0,
\end{equation*}
permits to write:
\begin{align}
\mathcal{A}:=&\frac{1}{2\delta} \int_{I_{y,\alpha}} \left(\left(\frac{\vert y + j(y,v) -(L+r) \vert }{\vert y-(L+r) \vert}\right)^{2 \delta} - 1 -2\delta \ln\left(\frac{\vert y + j(y,v) -(L+r) \vert }{\vert y-(L+r) \vert}\right) \right) \phi(dv)\nonumber\\
&\leq \delta\int_{J_1^M} \left(\frac{\vert y + j(y,v) -(L+r) \vert }{\vert y-(L+r) \vert}\right)^{2 \delta}\left(\ln\left(\frac{\vert y + j(y,v) -(L+r) \vert }{\vert y-(L+r) \vert}\right)\right)^2\phi(dv)\nonumber\\
&+ \delta M^{2\delta} \int_{J_2^M} \left(\ln\left(\frac{\vert y + j(y,v) -(L+r) \vert }{\vert y-(L+r) \vert}\right)\right)^2\phi(dv).
\label{eqA1}
\end{align}
Assumption \ref{assu:Wee2bis} leads to
\begin{align}
\mathcal{A}&\leq \delta\int_{J_1^M} \frac{\vert y + j(y,v) -(L+r) \vert^2}{\vert y-(L+r) \vert^2}\ \phi(dv) + \delta M^{2\delta} \kappa_{L+r}.
\label{eqA2}
\end{align}
On the set $J_1^M$ we have
\begin{align*}
\vert y + j(y,v) - (L+r) \vert &\leq \vert y-(L+r) \vert + \vert j(y,v) - j(L+r,v) \vert + \vert j(L+r,v) \vert\\
&\leq \frac{1}{M} \vert y + j(y,v) - (L+r) \vert+ \vert j(y,v) - j(L+r,v) \vert + \vert j(L+r,v) \vert.
\end{align*}
Consequently
\begin{equation*}
\vert y + j(y,v) - (L+r) \vert \leq \frac{M}{M-1}\,\Big(\vert j(y,v) - j(L+r,v) \vert + \vert j(L+r,v) \vert\Big).
\end{equation*}
Hence, on the considered set
\begin{align*}
&\int_{J_1^M} \frac{\vert y + j(y,v) -(L+r) \vert^2}{\vert y-(L+r) \vert^2}\, \phi(dv) \\
&\leq \frac{M^2}{(M-1)^2} \frac{2}{\vert y-(L+r) \vert^2}\Big[ \int_\mathbb{R} \vert j(y,v) - j(L+r,v) \vert^2 \phi(dv)+ \int_\mathbb{R} \vert j(L+r,v) \vert^2 \phi(dv)\Big].
\end{align*}
Using Assumption \ref{assu:Wee1}, we obtain
\begin{align}
\label{eq:prop1}
\int_{J_1^M} \frac{\vert y + j(y,v) -(L+r) \vert^2}{\vert y-(L+r) \vert^2} \phi(dv) &\leq \frac{2K M^2}{(M-1)^2} \left(1 + \frac{1 + \vert L+r \vert^2}{\vert y-(L+r) \vert^2}\right) \nonumber\\
&\leq \frac{2KM^2}{r^2(M-1)^2}(1 + \vert L+r\vert^2 + r^2).
\end{align}
We deduce that \eqref{eqA2} becomes
\begin{align*}
\mathcal{A}&\leq \frac{2\delta KM^2}{r^2(M-1)^2}\ (1 + \vert L+r\vert^2 + r^2) + \delta M^{2\delta} \kappa_{L+r}.
\end{align*}
Finally, there exists some finite constant $C_{L,r}>0$ such that
\begin{equation*}
\mathcal{L}f(y) \geq 2\delta \vert y-(L+r)\vert^{2\delta} \left[ \frac{(\epsilon - 2\delta)\sigma^2(y)}{2 \vert y-(L+r) \vert^2}+ \eta - \delta\, C_{L,r}\right].
\end{equation*}
Choosing $\delta$ small enough leads to $\mathcal{L}f(y) \geq 0$ for any $y\le L$. Under these conditions, the selected function $f$ is a Lyapunov function. This leads to 
\begin{equation*}
f(y_0) \leq \mathbb{E}_{y_0}[f(X_{\zeta_{L,r}}) 1_{\zeta_{L,r}< \zeta_{R}}] + \mathbb{E}_{y_0}[f(X_{\zeta_R}) 1_{\zeta_{R}< \zeta_{L,r}}],
\end{equation*}
where $\zeta_R$, respectively $\zeta_{L,r}$, is defined by \eqref{def:zeta}, resp. \eqref{eq:def:zeta2}.
Since $f$ is a non positive function and since \[
\mathbb{E}_{y_0}[f(X_{\zeta_{R}}) 1_{\zeta_{R}< \zeta_{L,r}}] \leq F(R-|L|-r) \mathbb{P}_{y_0}(\zeta_{R}< \zeta_{L,r})\le -(R-|L|-r)^{2\delta} \mathbb{P}_{y_0}(\zeta_{R}< \zeta_{L,r}),\] for $R$ large enough, the previous equation becomes
\begin{equation*}
f(y_0) \leq -(R-|L|-r)^{2\delta}\, \mathbb{P}_{y_0}(\zeta_{R}< \zeta_{L,r}).
\end{equation*}
Letting $R$ tend to infinity in the previous inequality implies that $\lim_{R\to\infty}\mathbb{P}_{y_0}(\zeta_{R}< \zeta_{L,r})=0$. Let us suppose that $E=\{\zeta_{L,r} = \infty\}$ is an event with positive probability $\mathbb{P}(E)>0$. Then there exists $R_0>0$ large enough such that 
\begin{equation*}
\mathbb{P}_{y_0}(\zeta_{R_0}=\infty)\ge \mathbb{P}_{y_0}(E \cap \{\zeta_{R_0}\ge \zeta_{L,r}\}) >0.
\end{equation*}
This inequality contradicts the statement of Lemma \ref{taufini}. We deduce therefore by a reductio ad absurdum argument that $\mathbb{P}_{y_0}(\zeta_{L,r}< \infty) =1$.
\end{proof}
\end{Com}
\section{Numerical illustrations}
\label{sec:infinite}
In this last section, two examples of jump diffusion processes were presented, and we proposed simulation experiments that show the methodology introduced in Section \ref{sec:algostopdiff}. The diffusion $(Y_t,\, t\ge 0)$ satisfies \eqref{afterijump}-\eqref{afterijump1} with a deterministic starting value $Y_0=y_0$. In all examples and without a loss of generality, we focus on the case $y_0<L$, where $L$ is the level the diffusion process should overcome, and $\tau_L$ is the first passage time. 
\subsubsection*{Example of stopped jump diffusion.}
We consider the time-homogeneous jump diffusion \eqref{afterijump}-\eqref{afterijump1} with coefficients $\mu(t,y)=2+\sin(y)$ and $\sigma(t,y)=1$. The diffusion starting in $y_0$  satisfies the following SDE between the jump times:
\begin{equation}
dY_t = (2+\sin(Y_t))\,dt + dB_t, \quad \mbox{for}\ T_i<t<T_{i+1},\quad i\in\mathbb{N},
\label{exsin}
\end{equation}
and the jumps satisfy:
\[
Y_{T_i}=Y_{T_i-}+j(T_i,Y_{T_i-},\xi_i),\quad i\in\mathbb{N}.
\]
$T_i=\sum_{k=1}^i E_k$, where $(E_k)_{k\ge 1}$ is a sequence of exponentially distributed random variables with an average of $1/\lambda$. We set $\lambda=1$ throughout the section. $(\xi_i)_{i\ge 1}$ is a sequence of i.i.d. variates with density $\phi$, specified later on. We fix $\mathbb{T}>0$ and thus aim to generate $\tau_L\wedge \mathbb{T}$ using Algorithm $(S\!J\!D)_{\td}^{y,L}$ (see Theorem \ref{thm:SJD}). Therefore, we must verify that Assumptions \ref{assum20}, \ref{assum24} and \ref{assum25} hold. 

\begin{figure}[h]
   \centerline{\includegraphics[width=9cm]{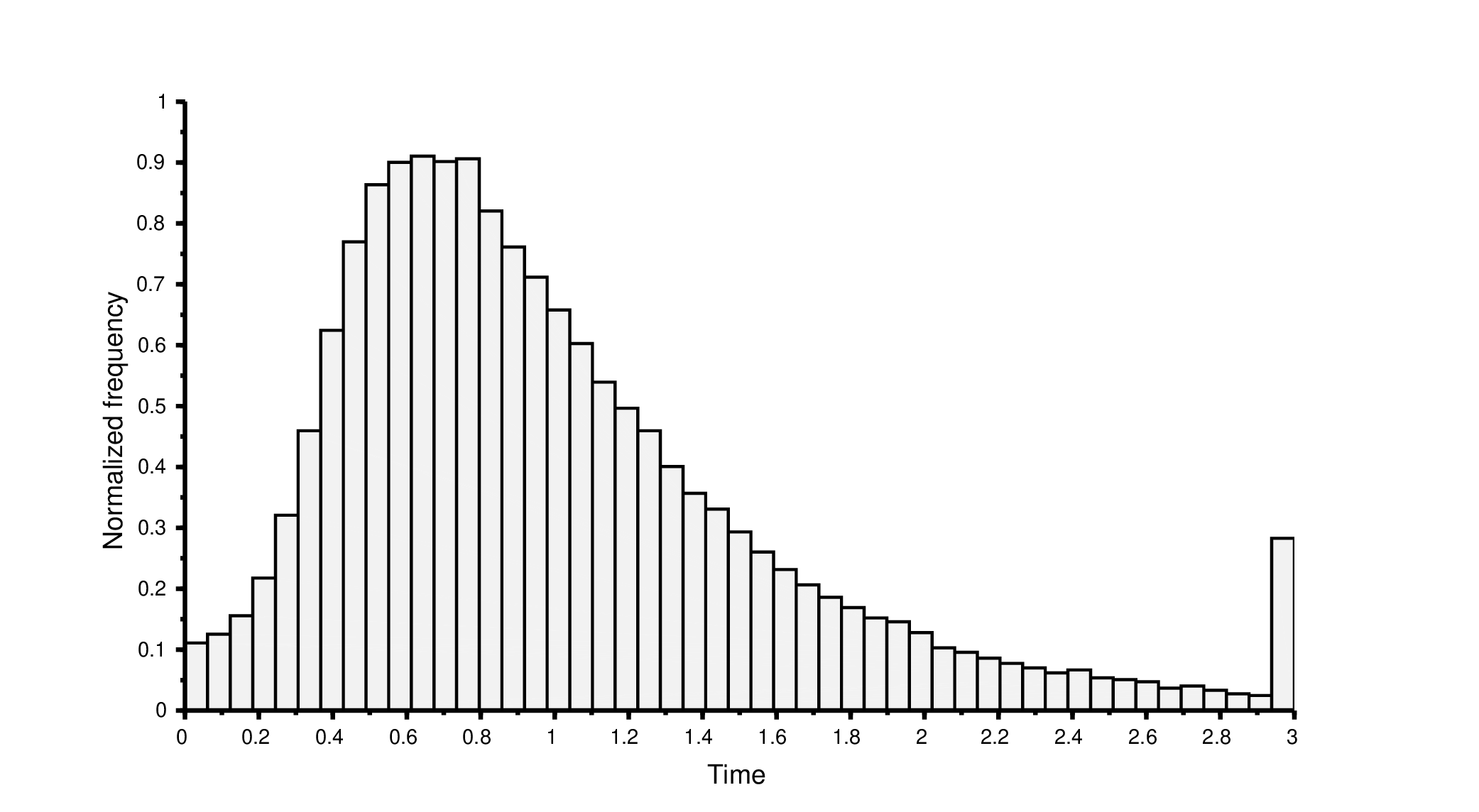}\hspace*{0.1cm}\includegraphics[width=9cm]{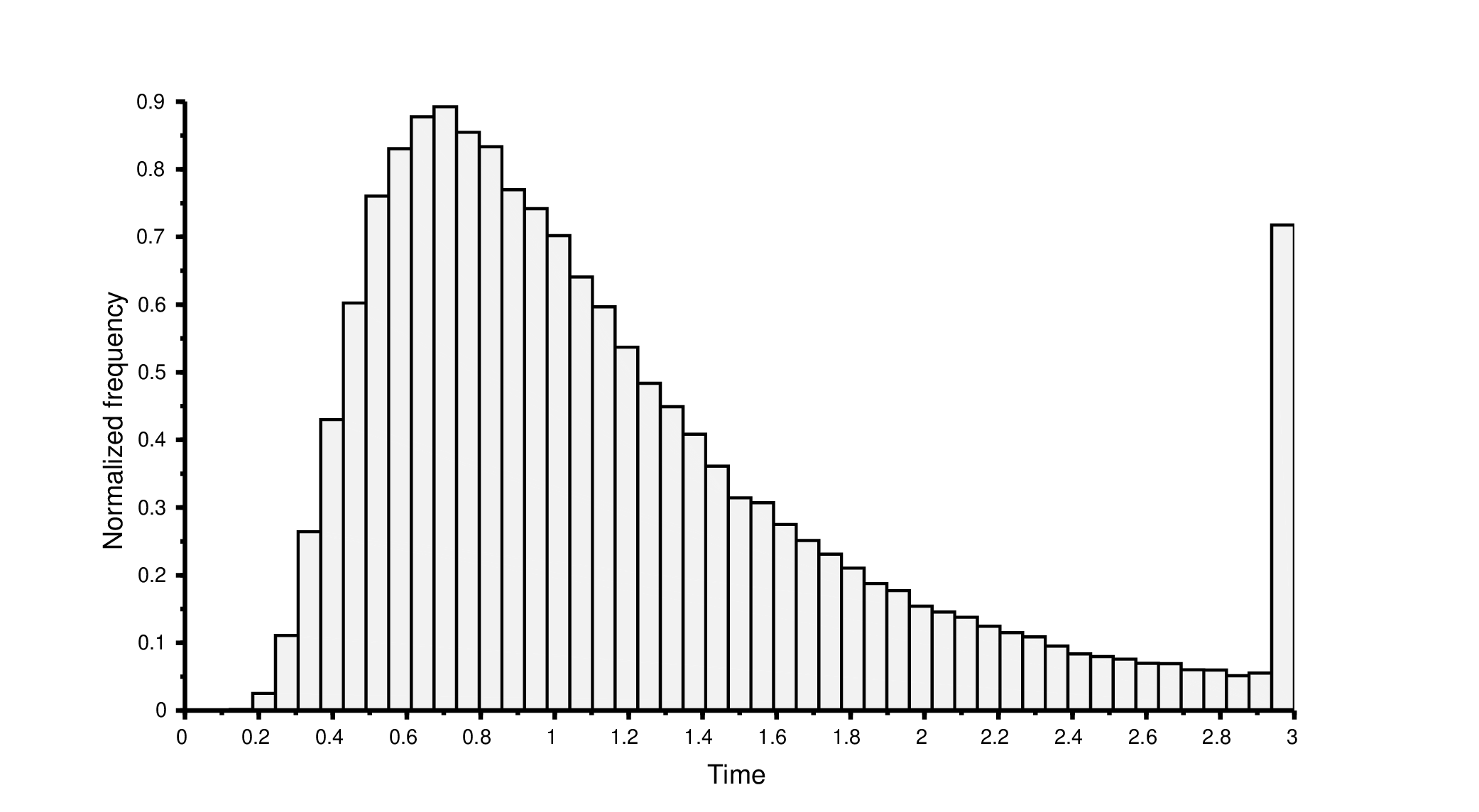}}
   \caption{\small Histograms of the stopping time $\tau_L\wedge \mathbb{T}$ for jump diffusion \eqref{exsin} with $j(t,y,z)=-z\sin(y)$. In this study, $y_0=-1$, $L=1$, $\mathbb{T}=3$, $\lambda=1$ and the size of the sample equals $100\,000$. The noise used for the jump generation corresponds to $\phi(t)=e^{-t}1_{\{t\ge 0\}}$ (left) or $\phi(t)=2\, 1_{[-1/4, 1/4]}(t)$ (right). We normalize the frequencies on the $y$-axis so that the area under the histogram is equal to one.}
   \label{fig:image1}
\end{figure}

The first assumption concerns the regularity of $\alpha(t,y)=\mu(t,y)$. The second and third assumptions  require the computation of bounds for both functions $\gamma(t,y)$ and $\beta(t,y)$ defined by \eqref{eq:def:beta-gamma}. In this study, we observe that:
\begin{equation*}
\beta(t,y) = \int^y_0 (2 + \sin(x))\, dx = 2y +1 - \cos(y) \leq 2L + 2 =: \beta_+,\quad \forall y\le L.
\end{equation*}
and:
\begin{equation*}
0\le \gamma(t,y) = \frac{(2+\sin(y))^2 + (2+\sin(y))'}{2} = \frac{(2+\sin(y))^2 + \cos(y)}{2} \leq 5 =: \kappa,\quad \forall y\le L.
\end{equation*}
The jump function can also be described as follows. We choose $j(t,y,z)=-z\sin(y)$. The histograms corresponding to the distribution of the first passage time $\tau_L\wedge \mathbb{T}$ are shown in Figure \ref{fig:image1}. 
%
As described in Section 2, the algorithm introduced in this study is based on rejection sampling that unfortunately  requires a long computation time. Several random variables related to Brownian motion are used as basic components: Brownian first passage times and values of the Brownian motion conditioned to stay under level $L$. These proposals are either rejected or accepted; then, the selected components are involved in the generation of the objective diffusion overrun. Thus, the ratio between accepted and rejected basic variates is a good efficiency indicator.      
The sample associated with the left figure requires approximately $377$ sec and its average acceptance rate is $1/580$; the right one requires approximately $150$ sec and the average acceptance rate is $1/190$.

As already explained in Remark \ref{rem:eff}, the efficiency of the algorithm depends on both parameters $\kappa$ and $\beta_+$: we must choose them to be as small as possible. 
These two examples show that the time consumption strongly depends on the jump measure.
A precise quantitative description of the efficiency would be cumbersome because the effects of all diffusion coefficients  $\mu$, $\sigma$, $\lambda$ and $\phi$ intermingle in a complex way.

Even if our numerical approach requires a long consumption time, it has the definitive advantage of being exact: no approximation error is introduced. Thus, the challenge for comparison of exact simulation and approximation methods is not feasible; users choose the best tool based on the objective to be achieved. We finally show the application of exact simulation to unbounded time domain as in the next example.

\subsubsection*{Example of jump diffusion that satisfies $\tau_L<\infty$.}
We now conclude the numerical illustrations with a jump diffusion starting in $y_0<L$ and  satisfying $\tau_L<\infty$. We introduce a stochastic process that satisfies Assumptions \ref{assu:Wee1} and \ref{assu:Wee2bis}, which ensure the finiteness of $\tau_L$ (see Theorem \ref{thm:Wee}), along with Assumptions \ref{assum20}, \ref{assum24} and \ref{assum25}. All these assumptions permit the use of Algorithm $({\rm JD})^{y_0,L}$ to determine the stopping time $\tau_L$.
We consider $(Y_t)_{t\ge 0}$ the solution of \eqref{exsin} between two consecutive jump times. The jumps are associated with a time-homogeneous function $j(y,v)=(L+1-y)v$ as follows:
\[
Y_{T_i}=Y_{T_i-}+j(Y_{T_i-},\xi_i),\quad i\in\mathbb{N}.
\]
This particular jump function drives the stochastic process towards the threshold $L$.
We recall that $T_i=\sum_{k=1}^i E_k$, where $E_k$ are exponentially distributed random variables with an average of  $1$. $(\xi_i)_{i\ge 1}$ is a sequence of independent uniformly distributed variates with density $\phi(v)=1_{[0,1]}(v)$. This model satisfies the announced assumptions quite easily. Let us just present the arguments 
that permit verification of Assumption \ref{assu:Wee2bis}. First, we observe that, for any $r>1$, $v\in]0,1[$ and $y\le -r$:
\begin{align}
 \ln(1-v)\le \ln\left(\frac{\vert y + j(y+L+r,v) \vert}{\vert y \vert}\right)= \ln\left(\frac{\vert y(1-v)+(1-r)v \vert}{\vert y \vert}\right)\le \ln(1-\frac{v}{r}).
 \label{ineq:ess}
\end{align}
This inequality leads to condition \eqref{eq:assu:Wee21} because: 
\begin{align*}
\int_\mathbb{R} \left( \ln\left(\frac{\vert y + j(y+L+r,v) \vert}{\vert y \vert}\right)\right)^2\phi(dv)\le \int_0^1 (\ln(1-v))^2\, dv< \infty.
\end{align*}
Finally, for condition \eqref{condition}, we note that $y\alpha(y+L+r)\le 0$ for any $y\le -r$ and introduce the constant:
\[
\eta:=-\int_0^1 \ln\Big(1-\frac{v}{r}\Big)\,dv. 
\]
Then, the definition of $\eta$ and \eqref{ineq:ess} imply:
\[
\int_\mathbb{R} y^2 \ln\left(\frac{\vert y + j(y+L+r,v) \vert}{\vert y \vert}\right)\phi(dv)= \int_\mathbb{R} y^2 \ln\left(\frac{\vert y(1-v)+(1-r)v  \vert}{\vert y \vert}\right)\phi(dv)\le-\eta y^2.
\]
All conditions presented in Assumption \ref{assu:Wee2bis} are therefore satisfied. Thus, the first passage time $\tau_L$ is almost certainly finite. Algorithm  $({\rm JD})^{y_0,L}$ generates a sample of this stopping time: the histogram in Figure \ref{fig:taufini} describes the probability distribution of the random variables. The generation of a sample of size 100 000 requires a processing time of approximately 90 
sec when $y_0=-1$ and 1 000 sec 
when $y_0=-3$. All coding was performed in the C++ programming language. This long processing time is strongly related to the nature of the algorithm: 
in most cases, algorithms based on rejection sampling require larger run times than approximation methods. 
Indeed, if the proposal distribution is far from the objective in some sense, the number of iterations must be large: typically, the proposal random object is rejected. 

As in the preceding example, we can focus on the number of basic components accepted or rejected by the algorithm. These basic components are Brownian first passage times and
values of the Brownian motion conditioned to stay under level L. Here, the average ratio between accepted and rejected basic variates equals approximately $1/180$  for the sample associated with Figure \ref{fig:taufini} (left) and $1/1610$ for Figure \ref{fig:taufini} (right).

The great advantage of the method is to avoid any approximation error: the outcome distribution coincides with the distribution of the first crossing time. A challenging topic is to suggest and study acceleration methods for such algorithms. For example, Herrmann and Zucca \cite{herrmann2020exact} proposed an acceleration procedure inspired by multiarmed bandits in the continuous diffusion framework.   
\begin{figure}[h]
   \centerline{\includegraphics[width=10cm]{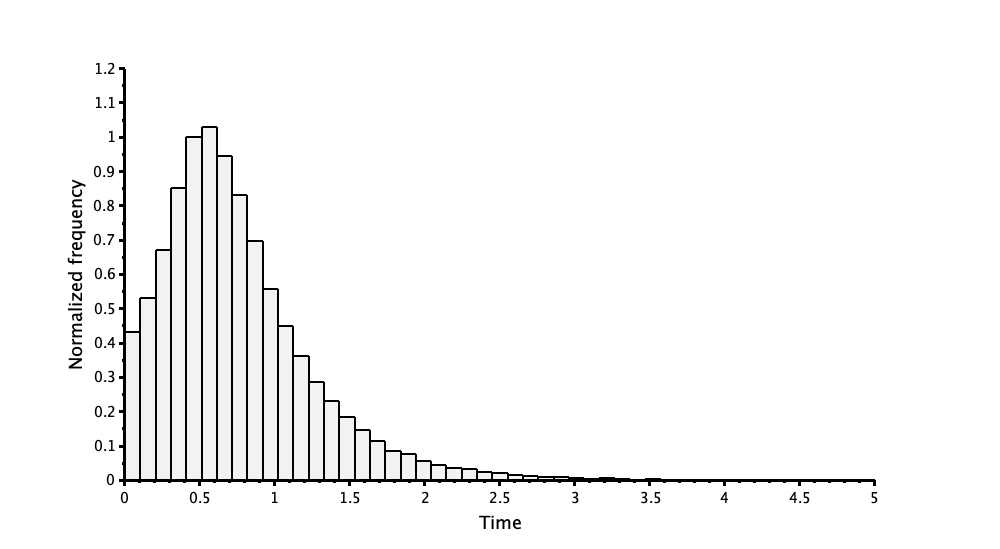}\hspace*{-1.5cm}
   \includegraphics[width=10cm]{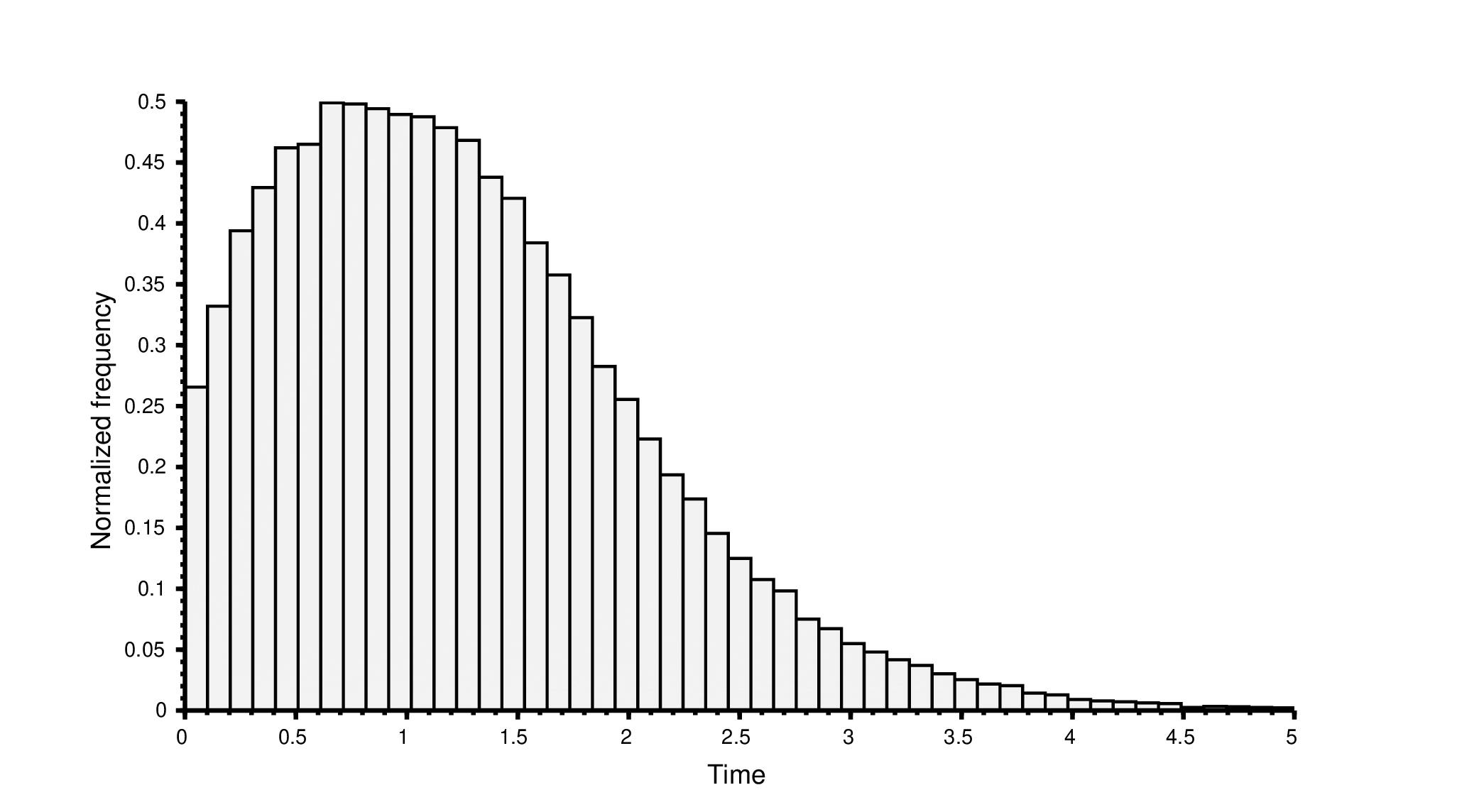}}
   \caption{\small Histograms of the stopping time $\tau_L$ for the jump diffusion \eqref{exsin} with $j(t,y,z)=(L+1-y)z$. In this study, $L=1$, the size of the sample equals $100\,000$ and $y_0=-1$ (left) or $y_0=-3$ (right). The noise used for the jump generation corresponds to $\phi(t)= 1_{[0,1]}(t)$. We normalize the frequencies on the $y$-axis so that the area under the histogram is equal to one.}
   \label{fig:taufini}
\end{figure}

\subsection*{Conclusion.}
We propose a new algorithm that permits us to generate the  time required by the jump diffusion process $(X_t)_{t\ge 0}$ to overrun a given threshold $L$. It is an exact generation without any approximation error. The approach is based both on the Markov property of the process and on rejection sampling. The probability that leads the decision of acceptance or rejection depends on the Girsanov transformation relating the Brownian paths to the paths of  diffusion. Such an approach can be adapted to time-dependent thresholds $(L_t)_{t\ge 0}$; thus we it introduce an auxiliary diffusion $Y_t:=X_t-(L_t-L_0)$. The stopping time $\tau_L$ then corresponds to the first time the jump diffusion $(Y_t)_{t\ge 0}$ overruns level $L_0$. The procedure may be adapted to the first exit time of an interval $I$. In \cite{herrmann2020exact1}, Herrmann and Zucca proposed an exact simulation of first exit times in the continuous  framework. Another method for stopped continuous diffusion was also proposed by Casella and Roberts \cite{casella-roberts}. Each of these two methods may represent the foundation stone of a generalised statement: we aim to write an efficient algorithm in the jump diffusion context.

\vspace{0.5cm}
\noindent {\bf Acknowledgement:} The authors would like to thank all referees for their interesting and constructive remarks, which improved this paper markedly.

\appendix
\section{Rejection sampling: fundamental results}
All algorithms belonging to the so-called \emph{exact simulation} framework essentially use rejection sampling. 
The primary properties of such a procedure used for the generation of random variables in many different situations can be described as follows. In most cases, the probability density of the 
variate is explicitly known and the algorithm is therefore succinctly written. In the context of diffusion processes, the algorithms used for the generation of path dependent variables are more sophisticated; they are nevertheless based on the same key idea. 

We let $(\Omega,\mathcal{F},\mathbb{P})$ be a probability space. We introduce a sequence of independent and identically distributed random experiments, and the $n$-th experiment permits the construction of an $\mathcal{F}$-measurable random couple $(Z_n,W_n)$ valued in $\mathbb{R}\times\{0,1\}$. The second marginal $W_n$ is a Bernoulli random variable that specifies if the experiment is successful. We then successively observe the experiments and focus the proposed attention on the variable $Z_n$ associated with the first success: 
\begin{equation}
\label{def:succes}
\mathcal{N}:=\inf\{n\ge 1:\ W_n=1\}\quad\mbox{and}\quad Z:=Z_\mathcal{N}.
\end{equation}
The rejection sampling corresponds exactly to a repetition of experiments until a successful result can be observed. The following statement (marginal modification of Theorem 3.2 in \cite{devroye}) can therefore be applied to the exact simulation.

\begin{proposition} \label{prop:rejet} We let $\psi$ any nonnegative Borel measurable function. The distribution of the random variable $Z$ defined by \eqref{def:succes} is characterised by:
\begin{equation}
\label{eq:chara}
\mathbb{E}[\psi(Z)]=\mathbb{E}[\psi(Z)|\mathcal{N}=1]=\frac{\mathbb{E}[\psi(Z)1_{\{\mathcal{N}=1\}}]}{\mathbb{P}(\mathcal{N}=1)}.
\end{equation}
\end{proposition}
\begin{proof} We consider all possible values of $\mathcal{N}$ which of course is geometrically distributed.
\begin{align*}
\mathbb{E}[\psi(Z)]&=\sum_{n\ge 1}\mathbb{E}\Big[\psi(Z)1_{\{\mathcal{N}=n\}}\Big]=\sum_{n\ge 1}\mathbb{E}\Big[\psi(Z_n)1_{\{W_1=0,\,W_2=0,\,\ldots,\, W_{n-1}=0,\,W_n=1\}}\Big].
\end{align*}
We denote the average of the Bernoulli random variable $W_1$ by $p$. Because the experiments are independent and identically distributed, we obtain:
\begin{align*}
\mathbb{E}[\psi(Z)]&=\mathbb{E}\Big[\psi(Z_1)1_{\{W_1=1\}}\Big]\sum_{n\ge 1}(1-p)^{n-1}=\frac{1}{p}\,\mathbb{E}\Big[\psi(Z)1_{\{\mathcal{N}=1\}}\Big].
\end{align*}
The identity \eqref{eq:chara} is a simple consequence of the property $p=\mathbb{P}(W_1=1)=\mathbb{P}(\mathcal{N}=1)$.
\end{proof}
Thus, the rejection sampling permits handling with conditional distributions. We also consider the cost of such an algorithm. As already mentioned, the number of experiments is geometrically distributed with an average of $1/\mathbb{P}(W_1=1)$. However, each experiment can also produce a particular cost: time-consumption of the experiment, number of computations, number of loops, number of variates. We now denote the (random) cost of the $n$-th experiment by $\mathcal{C}_n$, which depends on $W_n$, the success or failure of the corresponding experiment.  The total cost of the rejection sampling should therefore be related to both the number of Experiments $\mathcal{N}$ and the cost of each experiment $(\mathcal{C}_n)_{1\le n\le \mathcal{N}}$. Wald's identity leads to the following equation (see, for instance, Theorem 3.5 in \cite{devroye}):
\begin{equation}\label{cout}
\mathbb{E}\Big[\sum_{n=1}^\mathcal{N}\mathcal{C}_n\Big]=\mathbb{E}[\mathcal{C}_1]\mathbb{E}[\mathcal{N}],
\end{equation}
 and because $\mathcal{N}$ is geometrically distributed, we have $\mathbb{E}[\mathcal{N}]=1/\mathbb{P}(\mathcal{N}=1)$. The total cost is therefore precisely determined. 
 
\begin{comment}

\begin{proposition}\label{prop:wald} The average total cost of the rejection algorithm is given by
\begin{equation}\label{cout}
\mathbb{E}\Big[\sum_{n=1}^\mathcal{N}\mathcal{C}_n\Big]=\frac{\mathbb{E}[\mathcal{C}_1]}{\mathbb{P}(\mathcal{N}=1)},
\end{equation}
where $\mathcal{C}_1$ is the cost of the first experiment and $\mathcal{N}$ is the number of independent experiments observed until the algorithm stops.
\end{proposition}
\begin{proof} We recall that $W_n$ is a Bernoulli distributed random variable describing the success of the $n$-th experiment. Using Wald's identity, we obtain
\begin{align*}
\mathbb{E}\Big[\sum_{n=1}^\mathcal{N}\mathcal{C}_n\Big]&=\mathbb{E}\Big[\sum_{n=1}^{\mathcal{N}-1}\mathcal{C}_n\Big]+\mathbb{E}[\mathcal{C}_\mathcal{N}]=\mathbb{E}[\mathcal{N}-1]\,\mathbb{E}[\mathcal{C}_1|W_1=0]+\mathbb{E}[\mathcal{C}_1|W_1=1]\\
&=\Big(\frac{1}{\mathbb{P}(W_1=1)}-1\Big)\frac{\mathbb{E}[\mathcal{C}_1 1_{\{W_1=0\}}]}{\mathbb{P}(W_1=0)}+\frac{\mathbb{E}[\mathcal{C}_1 1_{\{W_1=1\}}]}{\mathbb{P}(W_1=1)}=\frac{\mathbb{E}[\mathcal{C}_1]}{\mathbb{P}(W_1=1)}.
\end{align*}
In order to conclude, we note that $\mathbb{P}(W_1=1)=\mathbb{P}(\mathcal{N}=1)$.
\end{proof}
\end{comment}

\bibliographystyle{plain}
\bibliography{biblithese}
\end{document}